\theoremstyle{definition}
\newtheorem{Fact}{Fact}
\newtheorem{Observation}{Observation}
\theoremstyle{plain}
\newtheorem{Theorem}{Theorem}
\newtheorem{Corollary}{Corollary}
\newtheorem{Lemma}{Lemma}
\newcommand{\bfSig}{\mathbf{\Sigma}}
\newcommand{\calN}{\mathcal{N}}
\newcommand{\om}{\omega}
\newcommand{\U}{\mathcal{U}}
\newcommand{\Uom}[1]{\mathcal{U}_{\omega^{#1}}}
\newcommand{\upto}{\upharpoonright}
\newcommand{\con}{\natural}
\newcommand{\wadge}{{W}}
\title{Wadge-like Degrees of Borel bqo-Valued Functions}
\author{Takayuki Kihara\\
{\normalsize Department of Mathematical Informatics, Graduate School of Informatics}\\
{\normalsize Nagoya University, Japan}\\
{\normalsize {\tt kihara@i.nagoya-u.ac.jp}}\\[5pt]
and\\[5pt]
Victor Selivanov\\
{\normalsize A.P. Ershov Institute of Informatics Systems SB RAS}\\
{\normalsize and Kazan  Federal University, Russia}\\
{\normalsize {\tt vseliv@iis.nsk.su}}
}
\begin{document}
\date{}
 \maketitle

\begin{abstract}
We unite two well known generalisations of the Wadge theory. The first one considers more general reducing functions than the continuous functions in the classical case, and the second one extends Wadge reducibility from sets (i.e., $\{0,1\}$-valued functions) to $Q$-valued functions, for a better quasiorder $Q$.
In this article, we consider more general reducibilities on  the $Q$-valued functions and generalise some results of L. Motto Ros in the first direction and of T. Kihara and A. Montalb\'an in the second direction:
Our main result states that the structure of the $\mathbf{\Delta}^0_\alpha$-degrees of $\mathbf{\Delta}^0_{\alpha+\gamma}$-measurable $Q$-valued functions is isomorphic to the $\mathbf{\Delta}^0_\beta$-degrees of $\mathbf{\Delta}^0_{\beta+\gamma}$-measurable $Q$-valued functions, and these are isomorphic to the generalized homomorphism order on the $\gamma$-th iterated $Q$-labeled forests.

 {\bf Key words}: Borel hierarchy, Wadge degree, amenable reducibilty, iterated labeled forest, $h$-quasiorder, better quasiorder.

\end{abstract}

\section{Introduction}\label{in}

In his thesis \cite{wad84}, W. Wadge introduced a way of measuring the topological complexity of subsets of Baire space $\calN=\om^\om$:
For subsets $A,B$ of $\calN$, we say that $A$ is {\em Wadge reducible} to $B$ ($A\leq_WB$), if $A=f^{-1}(B)$ for some continuous function $f$ on $\calN$.
The definition looks quite elementary; however its surprisingly well-behaved structure was never revealed without the development of deep determinacy techniques.
The induced quotient poset, now called the {\em Wadge degrees}, turns out to be well-founded and have no three pairwise incomparable elements.
Indeed, it provides us an ultimate refinement of all known hierarchies in descriptive set theory, such as the Borel hierarchy and the Hausdorff-Kuratowski difference hierarchy.
Nowadays, the notion of Wadge degrees has become important in several areas including descriptive set theory, inner model theory, computability theory, and automata theory
(see also \cite{cabal12} for the overview of the theory of Wadge degrees).


It is straightforward to generalize the notion of Wadge reducibility to arbitrary topological spaces.
However, contrary to the splendid success in the zero-dimensional case, the Wadge theory for non-zero-dimensional spaces was confronted with serious difficulties, cf.~\cite{ist19}.
Several approaches to solving this difficulty have been proposed, and one of them is considering other natural classes of reducing functions in place of the continuous functions, cf.~\cite{mss15}. 
For a pointclass $\mathbf{\Gamma}$, by {\em $\mathbf{\Gamma}$-function} we mean a function $f$ such that $f^{-1}(A)\in\mathbf{\Gamma}$ for each $A\in\mathbf{\Gamma}$. Since the $\mathbf{\Gamma}$-functions are closed under composition and contain the identity function, we obtain the corresponding $\mathbf{\Gamma}$-reducibility $\leq_\mathbf{\Gamma}$. Among such reducibilities  are $\mathbf{\Delta}^0_\alpha$-reducibilities, for each non-zero countable ordinal $\alpha$. Note that  $\mathbf{\Delta}^0_1$-reducibility coincides with the Wadge reducibility for $\calN$. We  shorten the notation $\leq_{\mathbf{\Delta}^0_\alpha}$ to $\leq_\alpha$, so $\leq_1$ coincides with $\leq_W$. 
The \(\mathbf{\Delta}^0_\alpha \)-functions (which coincide with the \(\mathbf{\Sigma}^0_\alpha \)-functions) and \(
\mathbf{\Delta}^0_\alpha \)-reducibilities (among the much larger class of the so called Borel amenable reducibilities) were  comprehensively investigated by L. Motto Ros (see \cite{mr09} and references therein), and later by \cite{mss15}.
In particular, we have $({\mathbf \Delta}^1_1(\calN);\leq_\alpha)\simeq({\mathbf \Delta}^1_1(\calN);\leq_W)$, $({\mathbf \Delta}^0_\xi(\mathbb{R});\leq_3)\simeq({\mathbf \Delta}^0_\xi(\calN);\leq_3)$, and so on.

Another extension of the Wadge theory is the extension from the case of subsets of $\calN$ to the case of functions $A\colon\calN\to Q$ to an arbitrary quasi-order $Q$.
For $Q=\{0,1,\bot\}$, this extension has already been made by Wadge \cite[Section I.E]{wad84}.
The Wadge hierarchy for the class $Q={\rm Ord}$ of ordinals, known as the hierarchy of norms (cf.~\cite{Blo14}) or the Steel hierarchy (cf.~\cite{du03}), plays a crucial role in descriptive set theory \cite{mos07}.
For the other use, as explicitly described in \cite{km18}, the Wadge theory is strongly tied with Martin's conjecture, one of the most prominent open problems in computability theory (see also the recent Notices article \cite{mon19}), where  in \cite{km18} Wadge reducibility for an arbitrary bqo $Q$ was considered in order to obtain a better understanding on uniform universality for countable Borel equivalence relations.
The necessity of the use of $Q$ was also occurred in \cite{ddw19,ki19}, where (a variant of) the Wadge degrees of $\{0,1,\bot\}$-valued functions was utilized  to analyze the behavior of the structure of real-valued functions, and its connection with the notion of $\alpha$-rank which was originally introduced in J. Bourgain's work on refining the Odell-Rosenthal theorem in Banach space theory.

Hereafter, we identify $Q$-valued functions on a space $X$ with {\em $Q$-partitions of $X$} of the form $\{A^{-1}(q)\}_{q\in Q}$ in order to stress their close relation to $k$-partitions (obtained when $Q=\bar{k}=\{0,\dots,k-1\}$ is an antichain with $k$-elements) studied by several authors.
For $Q$-partitions $A,B$ of $X$, let $A\leq_WB$ mean that there is a  continuous function $f\colon X\to X$ such that $A(x)\leq_QB(f(x))$ for each $x\in X$. The case of sets corresponds to the case of 2-partitions. Let  ${\mathbf \Gamma}(Q^X)$ be the set of $Q$-partitions $A$ of $X$ such that $A^{-1}(q)\in{\mathbf \Gamma}(X)$ for all $q\in Q$. A celebrated theorem of van Engelen, Miller and Steel \cite[Theorem 3.2]{ems87} implies that if $Q$ is a better quasiorder (bqo) then the Wadge ordering $\mathcal{W}_Q=({\mathbf \Delta}^1_1(Q^\mathcal{N});\leq_W)$ on the Borel $Q$-partitions  is a bqo, too (see also Fact \ref{fact:bqo}). Although this theorem gives an important information about the quotient-poset of $\mathcal{W}_Q$, it is  far from a  characterisation.

Many efforts (see e.g. \cite{he93,s07a,s16,s17} and references therein) to characterise the quotient-poset of $\mathcal{W}_Q$ were devoted to {\em $k$-partitions of $\mathcal{N}$}.    The approach in \cite{s07a,s16,s17} to this problem was to characterise the initial segments $({\mathbf \Delta}^0_\alpha(k^\mathcal{N});\leq_W)$ for bigger and bigger ordinals $2\leq\alpha<\omega_1$. To achieve this, the structures of iterated labeled forests with the so called homomorphism quasiorder were defined and useful properties of some natural operations on the iterated labeled forests were discovered, 
which have brought us a fresh look at the deep relationship between Wadge theory and wqo/bqo theory.

An important progress was recently achieved in \cite{km17} where a full characterisation of the quotient-poset of $\mathcal{W}_Q$ for arbitrary bqo $Q$ is obtained, with a heavy use of the (suitably extended)  iterated labeled forests and of the classical computability theory.

In this paper, we unite the above-mentioned extensions of the Wadge theory by characterising the quotient-posets of $({\mathbf \Delta}^1_1(Q^\mathcal{N});\leq_\alpha)$, of their variations for some other Borel amenable reducibilities (which are extended to $Q^\mathcal{N}$ in the obvious way), and of natural initial segments of such quotient-posets.
A typical result (extending the above-mentioned result of L. Motto Ros) may be formulated as follows:
For any bqo $Q$ and any countable ordinals $\alpha>0$ and $\beta\geq 3$, $({\mathbf \Delta}^1_1(Q^\mathcal{N});\leq_W)\simeq({\mathbf \Delta}^1_1(Q^\mathcal{N});\leq_\alpha)\simeq({\mathbf \Delta}^1_1(Q^\mathbb{R});\leq_\beta)$.

We deduce this fact from the following main result (Corollary \ref{mainth-cor}): for all $\alpha,\beta,\gamma<\omega_1$ with $\alpha,\beta>0$ we have $(\mathbf{\Delta}^0_{\alpha+\gamma}(Q^\calN);\leq_\alpha(Q^\calN))\simeq(\mathbf {\Delta}^0_{\beta+\gamma};\leq_\beta)$; which is proved by induction using the results in \cite{km17}. Among particular cases and variations of our result we mention the following: 
 
\begin{enumerate}
\item $({\mathbf \Delta}^0_\omega(Q^\mathcal{N});\leq_n)\simeq({\mathbf \Delta}^0_\omega(Q^\mathcal{N});\leq_W)$ for each $2\leq n<\omega$, 
\item $(\bigcup_{k<\omega}{\mathbf \Delta}^0_k(Q^\mathcal{N});\leq_n)\simeq(\bigcup_{k<\omega}{\mathbf \Delta}^0_k(Q^\mathcal{N});\leq_W)$ for each $2\leq n<\omega$, 
\item $({\mathbf \Delta}^0_5(Q^\mathcal{N});\leq_W)\simeq({\mathbf \Delta}^0_7(Q^\mathcal{N});\leq_3)\simeq({\mathbf \Delta}^0_7(Q^\mathbb{R});\leq_3)\simeq({\mathbf \Delta}^0_{\om+4}(Q^{\mathcal{C}([0,1])});\leq_\om)$.
\end{enumerate}
 
After recalling some preliminaries in the next section, we establish our main result in Section \ref{main} which gives a charactersation of the quotient-posets of $({\mathbf \Delta}^1_1(Q^\mathcal{N});\leq_{1+\xi})$, $\xi<\omega_1$, similar to that in \cite{km17}. Indeed, we prove the same characterisation for some other Borel amenable reducibilities in place of $\leq_{1+\xi}$. Finally, in Section \ref{forest} we provide an inner (i.e., using only notions for labeled forests) characterisations of the quasiorders induced on the iterated labeled forests by the quasiorders $\leq_{1+\xi}$.

\section{Preliminaries}\label{prel}


\subsection{Ordinals, quasi-orders, semilattices}\label{ordinal}

We assume the reader to be acquainted with the notion of ordinal.
Ordinals are denoted by $\alpha, \beta, \gamma,\xi,\eta,\ldots$.
Every non-zero ordinal $\xi$ is uniquely representable in the form $\xi=\omega^{\alpha_0}+\cdots+\omega^{\alpha_n}$ where $n<\omega$ and $\xi\geq\alpha_0\geq\cdots\geq\alpha_n$.
The first
uncountable ordinal is denoted by $\omega_1$.  

We use some standard notation and terminology on partially ordered
sets (posets), cf.~\cite{dp94}.
A {\em quasiorder} (qo) is a reflexive and transitive relation.
%
%
%
A qo is  {\em well-founded} if it has no
infinite descending chains.
A {\em well quasiorder} (wqo) is a qo  that has neither infinite
descending chains nor infinite antichains. Although the wqo's are closed under many natural finitary constructions like forming finite labeled words or trees, they are not always closed under important infinitary constructions.
In 1960s, C. Nash-Williams found a natural subclass of wqo's, called {\em better quasiorders} (bqo's) which contains most of the ``natural'' wqo's (in particular, all finite qo's) and has strong closure properties also for many infinitary constructions.
We omit a bit technical notion of bqo which is used only in formulations.
For more details on bqo's, we refer the reader to \cite{si85}.

By {\em  $\sigma$-semilattice} we mean an (upper) semilattice $(S;\sqcup)$ where supremums $\bigsqcup y_j=y_0\sqcup y_1\sqcup\cdots$ of countable sequences of elements $y_0,y_1,\ldots$ exist. An element $x$ of a $\sigma$-semilattice $S$ is
{\em $\sigma$-join-irreducible} if it cannot be represented as the countable supremum of elements strictly below $x$. As first stressed in \cite{s07}, the $\sigma$-join-irreducible elements play a central role in the study of Wadge degrees of $k$-partitions. The same applies to several variations of Wadge degrees, including the Wadge degrees of $Q$-partitions for a countable bqo $Q$.
\subsection{Descriptive set theory and Wadge-like reducibilities}\label{wadge}

Let $\omega$ be the space of non-negative integers with the
discrete topology. 
By endowing $\calN=\om^\om$ with the product of the discrete
topologies on $\omega$, we obtain the \emph{Baire space}.


We assume the reader to be familiar with Borel hierarchy $\{\mathbf{\Delta}^0_\alpha(X),\bfSig^0_{\alpha}(X),\mathbf{\Pi}^0_\alpha(X)\}_{ \alpha<\omega_1}$ in a Polish space $X$ (see e.g. \cite{ke95,mos07}).
In particular, $\mathbf{\Delta}^1_1(X)=\bigcup\{\mathbf{\Sigma}^0_{1+\alpha}(X)\mid \alpha<\omega_1\}$ is the class of all Borel sets.
Borel hierarchy gives rise to many important classes of functions.
A function $f\colon X\to Y$ between Polish spaces is {\em $\mathbf{\Gamma}$-measurable} if $f^{-1}(U)\in\mathbf{\Gamma}(X)$ for every open set $U\subseteq Y$ (cf.~\cite{ke95,mos07}). The class of such functions is denoted $\mathbf{\Gamma}(X,Y)$.
If the codomain $Y$ is a discrete space, then $\mathbf{\Sigma}^0_\alpha$-measurability coincides with $\mathbf{\Delta}^0_\alpha$-measurability, i.e.~$\mathbf{\Sigma}^0_\alpha(X,Y)=\mathbf{\Delta}^0_\alpha(X,Y)$.
In our proof, we use this notion only when $X=\calN$ and $Y\in\{\calN,Q\}$ where $Q$ is a bqo considered as a discrete topological space.
By discreteness, $\bfSig^0_{\alpha}(\calN,Q)$ coincides with the class of $\mathbf{\Delta}^0_{\alpha}$-partitions of $\calN$  (also denoted $\mathbf{\Delta}^0_{\alpha}(Q^\calN)$) mentioned in Introduction.
By the following topological fact, without loss of generality, one can always assume that $Q$ is countable.

\begin{Fact}[cf.~{\cite[Lemma 9.11 and Remark 9.12]{si85}}]\label{fact:bqo}
For every Borel function $f\colon\calN\to Q$, the image $f(\calN)$ is separable; hence, $f(\calN)$ is countable by discreteness of $Q$.
\end{Fact}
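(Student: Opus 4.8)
The plan is to reduce the separability of $f(\calN)$ to a Baire-category or Lusin-scheme argument on the continuous image of a Polish space, and then invoke discreteness to upgrade separability to countability. First I would recall that $\calN$ is a Polish (in fact Polish zero-dimensional, second-countable) space, and that a Borel function $f\colon\calN\to Q$ into a metrizable space has a Borel graph; alternatively, by the classical fact that every Borel function is continuous on a comeager (indeed dense $G_\delta$, after a change of topology refining the original one but keeping the Borel sets — the Kuratowski change-of-topology / Lusin–Suslin machinery) set, one can realize $f$ as the restriction of a continuous map $g\colon Z\to Q$ for some Polish $Z\subseteq\calN$ with $g(Z)\supseteq$ a cofinal portion of $f(\calN)$. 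The cleaner route, which I would actually write out, is: equip $\calN$ with a finer Polish topology $\tau'$ (still second countable) in which $f$ becomes continuous (possible since $f^{-1}$ of each basic open subset of $Q$ is Borel, and there are only countably many such preimages to throw into the topology); then $f\colon(\calN,\tau')\to Q$ is continuous, $(\calN,\tau')$ is separable, and the continuous image of a separable space is separable.

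Next I would spell out why separability of $f(\calN)$ forces countability. Since $Q$ carries the discrete topology, every subset of $Q$ is open and closed; in particular each singleton $\{q\}$ is open. A dense subset $D$ of $f(\calN)$ must therefore meet every nonempty open subset of $f(\calN)$, and $\{q\}\cap f(\calN)$ is open and nonempty for every $q\in f(\calN)$, so $D=f(\calN)$. Hence $f(\calN)$ equals its own (countable, by separability) dense subset, and is therefore countable. This is exactly the content invoked from \cite[Lemma 9.11 and Remark 9.12]{si85}, so at this point I would simply cite that reference for the details rather than reproving the change-of-topology lemma from scratch.

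Finally, I would note the consequence relevant to the rest of the paper: since $f(\calN)$ is a countable subset of $Q$, and any reducibility question about a single $Q$-partition $A\colon\calN\to Q$ (or countably many of them, as in comparing two partitions under $\leq_\alpha$) only ever sees values in the relevant countable subqos $A(\calN)\cup B(\calN)\subseteq Q$, one may restrict attention to a countable sub-bqo of $Q$ without loss of generality — bqo-ness being inherited by subsets. I expect the only genuinely delicate point to be the change-of-topology step (making $f$ continuous while keeping the space Polish and second countable); everything after that is formal, and in any case the statement is quoted from \cite{si85}, so the write-up can be kept to a couple of lines with a pointer to that source.
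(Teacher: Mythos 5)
Your second step (separability plus discreteness gives countability) and your closing remark about restricting to a countable sub-bqo are fine, and they match how the paper uses the Fact; note also that the paper itself offers no proof beyond the citation of \cite{si85}. The problem is the step you say you would actually write out. The change-of-topology argument is circular for exactly the case that matters: when $Q$ is an uncountable discrete space, its basic open sets are the singletons $\{q\}$, so ``the basic open subsets of $Q$'' form an uncountable family, and the number of distinct nonempty preimages $f^{-1}(\{q\})$ is precisely $|f(\calN)|$ --- the quantity you are trying to prove countable. The Kuratowski refinement technique only keeps the finer topology Polish and second countable when you adjoin \emph{countably many} Borel sets; to ``make $f$ continuous'' you would have to adjoin a set for each nonempty preimage, so the step presupposes the conclusion (and if you already knew there were only countably many preimages, there would be nothing left to prove). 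The same objection applies to your alternative route via ``every Borel function is continuous on a comeager set'': that theorem, as usually proved, requires a countable base for the codomain, and even granting it, the complement of the comeager set is merely meager, not countable, so its image is not controlled.

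The genuine content of the Fact is that a Polish space admits no partition into uncountably many nonempty pieces all of whose unions are Borel (equivalently, a function $f\colon\calN\to Q$ with $f^{-1}(S)$ Borel for every $S\subseteq Q$ cannot have uncountable range); this is not a consequence of the routine second-countable machinery, and naive attempts fail instructively (e.g.\ a Baire-property argument only bounds the number of nonmeager pieces, and cardinality counting only yields $2^{\aleph_1}\le 2^{\aleph_0}$, which is consistent). A correct write-up should either reproduce the argument of \cite[Lemma 9.11 and Remark 9.12]{si85}, or invoke a result of that strength (such as $\sigma$-discrete decomposability of disjoint Borel-additive families in Polish spaces), rather than the change-of-topology lemma. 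As it stands, simply citing \cite{si85}, as the paper does, is sound; the sketched ``cleaner route'' is not.
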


For $\alpha>1$ the class of $\bfSig^0_{\alpha}$-measurable functions on $\calN$ is not closed under composition (hence it does not induce a reasonable degree structure).
On the other hand, there are many natural subclasses of Borel functions closed under composition.
The class $D_\alpha$ of $\mathbf{\Delta}^0_{\alpha}$-functions mentioned in Introduction is such an example:
It is known and easy to see that $D_\alpha$ is closed under composition and contains the identity function, hence the relation $\leq_\alpha$ is always a qo. Furthermore, $D_\alpha\subseteq D_\beta$ for all $0<\alpha<\beta<\omega_1$, hence $\leq_\alpha$ is contained in $\leq_\beta$.

For any pointclass $\mathbf{\Gamma}$ and class $\mathcal{F}$ of functions, we say that a function $f\colon X\to Y$ is {\em $\mathbf{\Gamma}$-piecewise $\mathcal{F}$} if there is a partition $\{X_n\}$ of $X$ to $\mathbf{\Gamma}$-sets and a sequence $f_n\colon X_n\to Y$ of $\mathcal{F}$-functions with $f(x)=f_n(x)$ for any $x\in X_n$.
We denote by $D_\alpha^W$ the class of $\mathbf{\Delta}^0_\alpha$-piecewise continuous functions.
For $\alpha>1$, this class coincides with the $\mathbf{\Sigma}^0_\alpha$-piecewise continuous functions.
Note that $D_\alpha^W\subseteq D_\alpha$, $D_\alpha^W$ is closed under composition and contains the identity function (hence it induces a reducibility $\leq_\alpha^W$ on subsets of $X$). Furthermore, $D^W_\alpha\subseteq D^W_\beta$ for all $0<\alpha<\beta<\omega_1$, hence $\leq^W_\alpha$ is contained in $\leq^W_\beta$.
For more details on $D_\alpha^\wadge$, see \cite{mr09,mss15,ki15,gkn}.

We will use the following basic observations:

\begin{Observation}\label{obs:measu}
\begin{enumerate}
\item Let $\alpha,\beta < \omega_1$, $A\in\bfSig^0_{1+\beta}$, and let $f$ be a $\bfSig^0_{1+\alpha}$-measurable function on $\calN$. Then $f^{-1}(A)\in\bfSig^0_{1+\alpha+\beta}$.
\item For any $\alpha< \omega_1$,  $\bfSig^0_{1+\alpha}(\calN,\calN)\subseteq D_{(1+\alpha)\cdot\omega}$.
\item Let $f$ be a $\mathbf{\Delta}^0_\alpha$-function.
If $g$ is $\mathbf{\Sigma}^0_\alpha$-measurable, so is $g\circ f$.
\end{enumerate}
\end{Observation}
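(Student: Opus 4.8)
The plan is to prove item (1) by transfinite induction on the Borel level, to derive (2) from it, and to obtain (3) directly from the definitions.

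For (1) I would fix a $\bfSig^0_{1+\alpha}$-measurable $f$ and induct on $\beta$. The case $\beta=0$ is just the definition of $\bfSig^0_{1+\alpha}$-measurability: $A$ is open, so $f^{-1}(A)\in\bfSig^0_{1+\alpha}=\bfSig^0_{1+\alpha+0}$. For $\beta\geq 1$ the level $1+\beta$ is at least $2$, hence $A=\bigcup_n A_n$ with each $A_n\in\bfPi^0_{1+\beta_n}$ for suitable $\beta_n<\beta$; since preimages commute with complementation and countable unions, the induction hypothesis applied to the $\bfSig^0_{1+\beta_n}$-set $\calN\setminus A_n$ gives $f^{-1}(\calN\setminus A_n)\in\bfSig^0_{1+\alpha+\beta_n}$, so that $f^{-1}(A_n)\in\bfPi^0_{1+\alpha+\beta_n}$. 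As ordinal addition is strictly monotone in its last argument, $1+\alpha+\beta_n<1+\alpha+\beta$, whence $f^{-1}(A_n)\in\bfSig^0_{1+\alpha+\beta}$ and the countable union $f^{-1}(A)$ stays in $\bfSig^0_{1+\alpha+\beta}$.

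For (2) it is enough to show that every $\bfSig^0_{1+\alpha}$-measurable $f$ is a $\bfSig^0_{(1+\alpha)\cdot\omega}$-function, since these coincide with the $\mathbf{\Delta}^0_{(1+\alpha)\cdot\omega}$-functions, i.e., with $D_{(1+\alpha)\cdot\omega}$ (equivalently, once we know $f^{-1}(\bfSig^0_{(1+\alpha)\cdot\omega})\subseteq\bfSig^0_{(1+\alpha)\cdot\omega}$, we apply it to both $B$ and $\calN\setminus B$ for $B\in\mathbf{\Delta}^0_{(1+\alpha)\cdot\omega}$ to conclude $f^{-1}(B)\in\mathbf{\Delta}^0_{(1+\alpha)\cdot\omega}$). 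Given $A\in\bfSig^0_{(1+\alpha)\cdot\omega}$, write $A=\bigcup_n A_n$ with $A_n\in\bfPi^0_{1+\beta_n}$ and $1+\beta_n<(1+\alpha)\cdot\omega$. Item (1) gives $f^{-1}(A_n)\in\bfPi^0_{1+\alpha+\beta_n}$, and the arithmetic point is that this level is still below $(1+\alpha)\cdot\omega$: picking $m<\omega$ with $\beta_n<(1+\alpha)\cdot m$ one has $1+\alpha+\beta_n=(1+\alpha)+\beta_n<(1+\alpha)+(1+\alpha)\cdot m=(1+\alpha)\cdot(m+1)<(1+\alpha)\cdot\omega$. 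Hence $f^{-1}(A)=\bigcup_n f^{-1}(A_n)\in\bfSig^0_{(1+\alpha)\cdot\omega}$.

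For (3) I would just observe $(g\circ f)^{-1}(U)=f^{-1}(g^{-1}(U))$; by $\bfSig^0_\alpha$-measurability of $g$ we have $g^{-1}(U)\in\bfSig^0_\alpha$ for every open $U$, and since $\mathbf{\Delta}^0_\alpha$-functions are the same as $\bfSig^0_\alpha$-functions, $f^{-1}$ sends $\bfSig^0_\alpha$ into $\bfSig^0_\alpha$, so $(g\circ f)^{-1}(U)\in\bfSig^0_\alpha$. None of the three arguments is deep; the only steps requiring genuine care are the two ordinal inequalities — the strict monotonicity used in (1) and the bound $(1+\alpha)+\beta_n<(1+\alpha)\cdot\omega$ used in (2) — since these are precisely what make the Borel-level bookkeeping come out right.
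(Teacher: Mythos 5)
Your proposal is correct, and all three items are established. For item (3) your argument coincides with the paper's own one-line proof: $(g\circ f)^{-1}(U)=f^{-1}(g^{-1}(U))$ together with the fact (recorded in the Introduction) that $\mathbf{\Delta}^0_\alpha$-functions and $\bfSig^0_\alpha$-functions coincide. For items (1) and (2) the paper gives no argument at all, simply citing \cite[Section 24]{ke95} and \cite[Proposition 3.2]{mss15}; you instead reconstruct the standard proofs, so the difference is one of self-containedness rather than of method. Your transfinite induction for (1) is the usual one, and the two places where care is needed are handled correctly: writing a level $\gamma_n<1+\beta$ as $1+\beta_n$ with $\beta_n<\beta$ (absorbing the leading $1$, which works whether $\gamma_n$ is finite or infinite), and the strict monotonicity giving $1+\alpha+\beta_n<1+\alpha+\beta$ so that the union $f^{-1}(A)=\bigcup_n f^{-1}(A_n)$ lands in $\bfSig^0_{1+\alpha+\beta}$. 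Your derivation of (2) from (1), via the estimate $(1+\alpha)+\beta_n<(1+\alpha)\cdot(m+1)<(1+\alpha)\cdot\omega$ and the passage from preservation of $\bfSig^0_{(1+\alpha)\cdot\omega}$ to preservation of $\mathbf{\Delta}^0_{(1+\alpha)\cdot\omega}$ by taking complements, is exactly the content of the cited proposition of Motto Ros, Schlicht and Selivanov. In short, nothing is missing; you have supplied the details that the paper delegates to the references.
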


\begin{proof}
All of them are easy to prove. The first observation is well-known (cf.~\cite[Section 24]{ke95}).
For the second one, see also  \cite[Proposition 3.2]{mss15}.
For the last one, $g^{-1}(S)\in\mathbf{\Sigma}^0_{\alpha}$ for any $S\in\mathbf{\Sigma}^0_1$.
Therefore, $(g\circ f)^{-1}(S)=f^{-1}(g^{-1}(S))\in\mathbf{\Sigma}^0_{\alpha}$.
\end{proof}


Along with the classes $D_\alpha$, $D_\alpha^W$ there are other natural classes of reducing functions called Borel amenable classes of functions \cite{mr09}. These classes induce corresponding reducibilities on $Q$-partitions of $\calN$.
As shown in \cite[Proposition 4.3]{mr09}, any Borel amenable class $\mathcal{G}$ is of the form $D_\alpha^\mathcal{F}$ for some $\mathcal{F}$ and $\alpha<\om_1$.
In this case, $\mathcal{G}$ is called a Borel amenable class of level $\alpha$.
As a typical example, given $\beta<\om_1$, let us consider the class $\mathcal{F}$ of $\mathbf{\Sigma}^0_\gamma$-measurable functions for some $\gamma<\beta\cdot\om$.
%
We use the symbol $D_\alpha^{<\beta\cdot\om}$ to denote $D_\alpha^\mathcal{F}$, the class of all $\mathbf{\Delta}^0_{\alpha}$-piecewise $\mathbf{\Sigma}^0_{<\beta\cdot\om}$-measurable functions.
If $\alpha$ is an ordinal of the form $\om^{\alpha_0}+\om^{\alpha_1}+\dots+\om^{\alpha_m}$ for some $\alpha_0\geq\alpha_1\geq\dots\geq\alpha_m$, then we define $\alpha\ast=\om^{\alpha_0}$.
The class $D_{1+\alpha}^{<\alpha\ast}$ naturally arises as seen below.
Moreover, $D_\alpha$ is the largest class among Borel amenable classes of level $\alpha$, cf.~\cite[Section 6]{mr09}.
The main results in \cite{ki15,gkn} clarify the relationship between $D_\alpha$ and $D_\alpha^W$ as follows:
\[D_{1+\alpha}^\wadge\subseteq\dots\subseteq D_{1+\alpha}^{<1+{\alpha\ast}}\subseteq D_{1+\alpha}\subseteq D_{1+\alpha+1}^{<1+{\alpha\ast}}.\]
In \cite{ki15,gkn}, it is conjectured that $D_{1+\alpha}^{<1+\alpha\ast}=D_{1+\alpha}$.
It is not hard to see that if $\alpha$ is of the form $\beta\cdot\om$, then $\alpha{\ast}=\alpha$, so this also provides us a fine picture between the classes mentioned in Observation \ref{obs:measu} (2).

For a class $\mathcal{F}$ of functions, we say that a space $X$ is {\em $\mathcal{F}$-isomorphic to} $Y$ if there is a bijection $f\colon X\to Y$ such that both $f$ and $f^{-1}$ belong to $\mathcal{F}$.
If two spaces have the same $D_\xi$-isomorphism type, they have the same $\leq_{\xi}$-structure of $Q$-partitions:

\begin{Lemma}\label{lem:isomor}
Assume that quasi-Polish spaces $X$ and $Y$ are $D_\xi$-isomorphic.
Then, for any ordinal $\theta\geq\xi$ and qo $Q$, we have $(\mathbf{\Delta}^0_{\theta}(Q^X);\leq_{\xi})\simeq(\mathbf{\Delta}^0_{\theta}(Q^Y);\leq_{\xi})$.
\end{Lemma}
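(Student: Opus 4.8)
The plan is to transport $Q$-partitions along the hypothesised isomorphism by pre-composition. Fix a bijection $\varphi\colon X\to Y$ with $\varphi,\varphi^{-1}\in D_\xi$, and define maps $\Phi\colon\mathbf{\Delta}^0_\theta(Q^X)\to\mathbf{\Delta}^0_\theta(Q^Y)$ and $\Psi\colon\mathbf{\Delta}^0_\theta(Q^Y)\to\mathbf{\Delta}^0_\theta(Q^X)$ by $\Phi(A)=A\circ\varphi^{-1}$ and $\Psi(B)=B\circ\varphi$. I would show that $\Phi$ is a bijection with inverse $\Psi$, that both $\Phi$ and $\Psi$ are monotone for $\leq_\xi$, and conclude that $\Phi$ descends to the required isomorphism of the induced quotient posets.

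First I would check that $\Phi$ and $\Psi$ are well defined. For $A\in\mathbf{\Delta}^0_\theta(Q^X)$ and $q\in Q$ one has $\Phi(A)^{-1}(q)=(\varphi^{-1})^{-1}\bigl(A^{-1}(q)\bigr)$; since $\theta\geq\xi$ we have $D_\xi\subseteq D_\theta$ (by the monotonicity of the classes $D_\alpha$ recalled in Section~\ref{wadge}), so $\varphi^{-1}$ is a $\mathbf{\Delta}^0_\theta$-function and hence pulls the $\mathbf{\Delta}^0_\theta$-set $A^{-1}(q)$ back to a $\mathbf{\Delta}^0_\theta$-set; thus $\Phi(A)\in\mathbf{\Delta}^0_\theta(Q^Y)$, and symmetrically for $\Psi$. (The Borel-hierarchy facts used here are available for quasi-Polish spaces.) Because $\varphi$ is a bijection, $\Psi\circ\Phi$ and $\Phi\circ\Psi$ are the respective identity maps, so $\Phi$ is a bijection.

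Next I would verify monotonicity. Suppose $A\leq_\xi A'$ in $\mathbf{\Delta}^0_\theta(Q^X)$, witnessed by some $g\in D_\xi$ with $A(x)\leq_Q A'(g(x))$ for all $x\in X$, and put $h=\varphi\circ g\circ\varphi^{-1}$. As $D_\xi$ is closed under composition, $h\in D_\xi$, and for each $y\in Y$ we compute $\Phi(A)(y)=A(\varphi^{-1}(y))$ and $\Phi(A')(h(y))=A'\bigl(g(\varphi^{-1}(y))\bigr)$, whence $\Phi(A)(y)\leq_Q\Phi(A')(h(y))$; so $\Phi(A)\leq_\xi\Phi(A')$. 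The identical argument with $\varphi^{-1}$ in place of $\varphi$ shows $\Psi$ is monotone, and since $\Psi=\Phi^{-1}$ this also gives that $\Phi$ reflects $\leq_\xi$. Therefore $\Phi$ is an isomorphism between the quasiorders $(\mathbf{\Delta}^0_\theta(Q^X);\leq_\xi)$ and $(\mathbf{\Delta}^0_\theta(Q^Y);\leq_\xi)$, and in particular between the induced quotient posets.

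I do not expect a genuine obstacle: this is the obvious conjugation argument, and its only non-formal ingredients are the closure of $D_\xi$ under composition and the inclusion $D_\xi\subseteq D_\theta$ for $\theta\geq\xi$, both already recorded in the preliminaries. The one point worth a second look is precisely the use of $D_\xi\subseteq D_\theta$ — it is what guarantees that a $D_\xi$-isomorphism genuinely acts on $\mathbf{\Delta}^0_\theta$-partitions, and it explains why the hypothesis $\theta\geq\xi$ is needed.
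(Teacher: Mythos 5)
Your argument is correct and is essentially the paper's own proof: the map $\Phi(A)=A\circ\varphi^{-1}$ is exactly the paper's transport $f\mapsto f\circ h$ along the $D_\xi$-isomorphism $h=\varphi^{-1}\colon Y\to X$, with the same conjugated witness $\varphi\circ g\circ\varphi^{-1}$ (the paper's $h^{-1}\circ\psi\circ h$), the same use of closure of $D_\xi$ under composition, and the same role of $\theta\geq\xi$ in the well-definedness step. No further comment is needed.
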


\begin{proof}
Let $h\colon Y\to X$ be a $D_\xi$-isomorphism.
We show that $f\mapsto f\circ h$ induces  $(\mathbf{\Delta}^0_{\theta}(Q^X);\leq_{\xi})\simeq(\mathbf{\Delta}^0_{\theta}(Q^Y);\leq_{\xi})$.
Note that if $f\in\mathbf{\Delta}^0_{\theta}(Q^X)$ then $f\circ h\in\mathbf{\Delta}^0_{\theta}(Q^Y)$ (as $\xi\leq\theta$) by Observation \ref{obs:measu} (3).
For $f,g\colon X\to Q$, assume that $f\leq_{\xi}g$; that is, there is a $\mathbf{\Delta}^0_\xi$-function $\psi$ such that $f(x)\leq_Qg\circ\psi(x)$ for any $x\in X$.
The last inequality is equivalent to the following:
For any $y\in Y$, $f\circ h(y)\leq_Qg\circ\psi\circ h(y)=g\circ h\circ h^{-1}\circ \psi\circ h(y)$; hence $f\circ  h\leq_{\xi}g\circ h$ via $h^{-1}\circ\psi\circ h$, where note that $h^{-1}\circ\psi\circ h$ is a $\mathbf{\Delta}^0_\xi$-function since $D_\xi$ is closed under composition.
For surjectivity, any $g\in\mathbf{\Delta}^0_\theta(Q^Y)$ can be written as $(g\circ h^{-1})\circ h$.
\end{proof}

It is known that every countable dimensional uncountable Polish space is $D^\wadge_3$-isomorphic to $\calN$, cf.~\cite[Theorem 4.21]{mss15}, where a space is {\em countable dimensional} if it is a countable union of finite dimensional subspaces.
Moreover, Kuratowski showed that every uncountable Polish space is $D_\om$-isomorphic to $\calN$, cf.~\cite[Proposition 4.3]{mss15}.
Combining Lemma \ref{lem:isomor} with these facts, we get the following:

\begin{Corollary}\label{cor:more-spaces}
\begin{enumerate}
\item Let $\mathcal{X}$ be a countable dimensional uncountable Polish space.
Then, for any ordinals $\theta\geq\xi\geq 3$,  we have $(\mathbf{\Delta}^0_{\theta}(Q^\mathcal{X});\leq_{\xi})\simeq(\mathbf{\Delta}^0_{\theta}(Q^\calN);\leq_{\xi})$.
\item Let $\mathcal{X}$ be an uncountable Polish space.
Then, for any ordinals $\theta\geq\xi\geq\om$, we have $(\mathbf{\Delta}^0_{\theta}(Q^\mathcal{X});\leq_{\xi})\simeq(\mathbf{\Delta}^0_{\theta}(Q^\calN);\leq_{\xi})$.
\end{enumerate}
\end{Corollary}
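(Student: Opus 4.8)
The plan is to read off both statements from Lemma~\ref{lem:isomor}, feeding it the $D_\xi$-isomorphisms supplied by the two structure theorems recalled immediately above. The one preliminary point to record is that the classes $D_\xi$ are increasing in $\xi$ (Section~\ref{wadge}: $D_\alpha\subseteq D_\beta$ for $0<\alpha<\beta<\om_1$, and $D^\wadge_\alpha\subseteq D_\alpha$), so a bijection witnessing a $D_\zeta$-isomorphism for a small $\zeta$ also witnesses a $D_\xi$-isomorphism for every $\xi\geq\zeta$.

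For item~(1) I would argue as follows. If $\mathcal{X}$ is a countable dimensional uncountable Polish space, then \cite[Theorem~4.21]{mss15} provides a bijection $h\colon\calN\to\mathcal{X}$ with $h,h^{-1}\in D^\wadge_3\subseteq D_3$, so $\mathcal{X}$ and $\calN$ are $D_3$-isomorphic. Fixing any $\xi\geq 3$, monotonicity puts $h,h^{-1}\in D_\xi$, so $\mathcal{X}$ and $\calN$ are $D_\xi$-isomorphic; Lemma~\ref{lem:isomor} applied with this $\xi$ and any $\theta\geq\xi$ then yields $(\mathbf{\Delta}^0_\theta(Q^\mathcal{X});\leq_\xi)\simeq(\mathbf{\Delta}^0_\theta(Q^\calN);\leq_\xi)$. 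For item~(2) one repeats this word for word, starting instead from Kuratowski's theorem in the form \cite[Proposition~4.3]{mss15} that every uncountable Polish space is $D_\om$-isomorphic to $\calN$, and noting that the witnessing bijection then lies in $D_\xi$ for every $\xi\geq\om$.

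There is essentially no obstacle here: the substantive work lives in Lemma~\ref{lem:isomor} and in the cited isomorphism theorems, and the present corollary is just the bookkeeping that combines them, the only genuine observation being that monotonicity of $(D_\xi)_{\xi<\om_1}$ lets the isomorphism have level at most $\xi$ rather than exactly $\xi$. (The thresholds $3$ and $\om$ are exactly what those isomorphism theorems deliver and cannot be lowered by this argument.)
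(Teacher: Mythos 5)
Your argument is exactly the paper's: the corollary is obtained by combining Lemma \ref{lem:isomor} with the cited $D^\wadge_3$- and $D_\om$-isomorphism theorems from \cite{mss15}, and your extra remark that monotonicity of the classes $D_\xi$ upgrades the witnessing bijection to a $D_\xi$-isomorphism for all larger $\xi$ is precisely the implicit bookkeeping step. Correct, and essentially identical to the paper's proof.
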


\subsection{Tree calculus}\label{trees}

Let $\omega^*$ be the set of finite sequences
of elements of $\omega$, including the empty sequence $\varepsilon$.
For $\sigma,\tau\in\omega^*$, we write
$\sigma\sqsubseteq \tau$ to denote that $\sigma$ is an initial
segment of the sequence $\tau$.
A {\em tree} is a non-empty set $T\subseteq\omega^*$ which is closed downwards under $\sqsubseteq$.
For any qo $Q$, a {\em $Q$-tree} is a pair $(T,t)$ consisting of a well founded tree $T\subseteq\omega^*$ and a labeling $t\colon T\to Q$. Let $\mathcal{T}(Q)$ be the set of $Q$-trees quasi-ordered by the relation: $(T,t)\leq_h(V,v)$ iff there is a monotone function $\varphi\colon T\to V$ with $\forall x\in T(t(x)\leq_Qv(\varphi(x)))$. Let $\mathcal{T}^\sqcup(Q)$ be defined similarly but with forests (i.e., the sub-qo $T\setminus\{\varepsilon\}$ of a tree $T$) in place of trees. As follows from Laver's theorem, if $Q$ is bqo then so are also $\mathcal{T}(Q)$ and $\mathcal{T}^\sqcup(Q)$, hence $\mathcal{T}$ is an operator on the class BQO of all bqo's. This operator was introduced in \cite{s07} and turned out useful (together with some of its iterates) for characterising some initial segments of $\mathcal{W}_Q$ (we warn the reader that operator $\mathcal{T}$ is denoted $\widetilde{\mathcal{T}}$ in \cite{s07}).
As observed in \cite{s07,s16},  $\mathcal{T}^\sqcup(Q)$ is a $\sigma$-semilattice (the countable supremum operation $\bigsqcup$ is the disjoint union of labeled forests). The $\sigma$-join-irreducible elements of $\mathcal{T}^\sqcup(Q)$ are precisely those $h$-equivalent to the elements of $\mathcal{T}(Q)$.

Next we recall iterations of $\mathcal{T}$ from \cite{s16,s17}.
For any $q\in Q$, let $s(q)$ be the singleton tree labeled by $q$, then $s\colon Q\to\mathcal{T}(Q)$ is an embedding of qo's. Identifying $q$ with $s(q)$, we may think that $Q$ is a substructure of $\mathcal{T(Q)}$. We iterate the operator $\mathcal{T}$ as follows: $\mathcal{T}^0(Q)=Q$, $\mathcal{T}^{\alpha+1}(Q)=\mathcal{T}(\mathcal{T}^\alpha(Q))$, and $\mathcal{T}^\lambda(Q)=\bigcup_{\alpha<\lambda}\mathcal{T}^\alpha(Q)$ for a limit ordinal $\lambda$. Then $\{\mathcal{T}^\alpha(Q)\}$ is an increasing sequence of bqo's. Since all our trees are countable, $\mathcal{T}^{\omega_1}(Q)$ is a fixed point of this iteration procedure.
The function $s$ is naturally extended to a function $s$ on $\mathcal{T}^{\omega_1}$ such that $q=s(q)$, $T\leq_hs(T)$, and $T\leq_hV$ iff $s(T)\leq_hs(V)$.
This iteration procedure was extended in \cite{km17} by considering  operations $(s_\alpha)_{\alpha<\omega_1}$ in place of just one function $s$ (to unify and simplify notation, we use the notation $s_\alpha(T)$ instead of the notation $\langle T\rangle^{\omega^\alpha}$ in \cite{km17}).
The idea of the iteration may be described as follows:
First take the ($\om_1$st) fixed point $\mathcal{T}_\om:=\mathcal{T}^{\om_1}$ closed under $s_0=s$; then add new $s_1$ which enumerates fixed points for $s_0$ in the sense that $s_0(s_1(T))\equiv_hs_1(T)$, and take the ($\om_1$st) fixed point $\mathcal{T}_{\om^2}$ closed under $s_0$ and $s_1$.
Continue this Veblen-like procedure to produce $(\mathcal{T}_{\om^\alpha};s_\alpha)_{\alpha<\om_1}$.

We now give the precise inductive definition of $(\mathcal{T}_{\om^\alpha},\mathcal{T}_{\om^\alpha}^\sqcup)_{\alpha<\om_1}$ formalizing the idea described in the previous paragraph.
In \cite[Definition 3.19]{km17}, $\mathcal{T}^\sqcup_{\om^\alpha}(Q)$ is defined as a set of terms in the language consisting of constant symbols $s_\alpha(q)$ for $q\in Q$, a $2$-ary function symbol $\cdot$, an $\om$-ary function symbol $\sqcup$, and unary function symbols $s_\beta$ for $\beta<\alpha$:
Every constant symbol is a {\em singleton term}, and every singleton term is a {\em tree term}.
If $(S_i)_{i\in\om}$ is a sequence of tree terms, then $\sqcup_iS_i$ is a {\em forest term}.
If $S$ is a singleton term and $F$ is a forest term, then $S\cdot F$ is a tree term.
If $T$ is a tree term, then $s_\beta(T)$ is a singleton term for any $\beta<\alpha$.
Then $\mathcal{T}_{\om^\alpha}(Q)$ is the set of all tree terms, and $\mathcal{T}^\sqcup_{\om^\alpha}(Q)$ is the set of all tree and forest terms.
%
For any non-zero countable ordinal $\xi=\omega^{\alpha_0}+\cdots+\omega^{\alpha_n}$, $\alpha_0\geq\cdots\geq\alpha_n$, we define the operator $\mathcal{T}_\xi=\mathcal{T}_{\omega^{\alpha_0}}\circ\cdots\circ\mathcal{T}_{\omega^{\alpha_n}}$   (let also $\mathcal{T}_0$ be the identity operator on BQO).
Finally, let $\mathcal{T}_{\omega_1}(Q)=\bigcup_{\xi<\omega_1}\mathcal{T}_{\xi}(Q)$, and similarly for $\mathcal{T}^\sqcup_{\omega_1}(Q)$.

As in \cite[Definition 3.20]{km17}, we inductively define a qo $\leq_h$ on $\mathcal{T}_{\om_1}^\sqcup(Q)$ as follows:
For $p,q\in Q$ and $\alpha<\om_1$, $p\equiv_hs_\alpha(p)$, and $p\leq_hq$ iff $p\leq_Qq$.
For singletons $s_\alpha(U)$ and $s_\beta(V)$, $s_\alpha(U)\leq_hs_\beta(V)$ is equivalent to $U\leq_h V$ if $\alpha=\beta$; to $s_\alpha(U)\leq_hV$ if $\alpha>\beta$; and to $U\leq_hs_\beta(V)$ if $\alpha<\beta$.
For singletons $A,C$ and forests $B,D$ (where the empty forests are allowed, cf.~\cite[Definition 3.20]{km17}), define $A\cdot B\leq_h C\cdot D$ if either $A\leq_h C$ and $B\leq_h C\cdot D$, or $A\not\leq_hC$ and $A\cdot B\leq_h D$.
Moreover, $B=\sqcup_iB_i\leq_h C\cdot D$ iff $B_i\leq_hC\cdot D$ for any $i$, and $A\cdot B\leq_hD=\sqcup_iD_i$ iff $A\cdot B\leq_hD_i$ for some $i$.
Again,  $\mathcal{T}^\sqcup_{\omega_1}(Q)$ is a $\sigma$-semilattice the $\sigma$-join-irreducible elements of which are precisely those $h$-equivalent to the elements of $\mathcal{T}_{\omega_1}(Q)$.
See also \cite{s18}.


\subsection{Characterising Wadge degrees}\label{charW}

As an extension on a number of previous works on the Wadge degrees, the complete characterisation of the $Q$-Wadge degrees $\mathcal{W}_Q$ in terms of the iterated labeled forests is described in \cite{km17} as follows.

\begin{Theorem}[\cite{km17}]\label{char1}
Let $\eta<\omega_1$ and $Q$ be a bqo. Then  $(\mathcal{T}^\sqcup_{\eta}(Q);\leq_h)\simeq(\mathbf{\Delta}^0_{1+\eta}(Q^\mathcal{N});\leq_W)$ and $(\mathcal{T}^\sqcup_{\omega_1}(Q);\leq_h)\simeq(\mathbf{\Delta}^1_1(Q^\mathcal{N});\leq_W)$. 
 \end{Theorem}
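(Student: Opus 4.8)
The plan is to set up an interpretation of the term algebra $\mathcal{T}^\sqcup_\eta(Q)$ into $\mathbf{\Delta}^0_{1+\eta}(Q^\calN)$ and to show that it descends to an isomorphism of the quotient-posets; the statement about $\mathcal{T}^\sqcup_{\omega_1}$ then follows by taking unions, since $\mathcal{T}^\sqcup_{\omega_1}(Q)=\bigcup_{\eta<\omega_1}\mathcal{T}^\sqcup_\eta(Q)$, $\mathbf{\Delta}^1_1(Q^\calN)=\bigcup_{\eta<\omega_1}\mathbf{\Delta}^0_{1+\eta}(Q^\calN)$, and the isomorphisms for different $\eta$ cohere. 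Concretely, define a \emph{realization} $\mu$ by recursion on terms: a constant $s_\alpha(q)$ is sent to the constant $Q$-partition with value $q$; a forest term $\sqcup_i S_i$ is sent to the countable join $\bigsqcup_i\mu(S_i)$ of $Q$-partitions (placing $\mu(S_i)$ on the $i$-th piece of a fixed clopen partition of $\calN$ into $\omega$ copies), matching the fact that $\mathcal{T}^\sqcup$ is a $\sigma$-semilattice with $\sqcup$ the disjoint union; a tree term $S\cdot F$ is sent to a ``one-step'' construction placing $\mu(F)$ on a relatively open part of a fresh copy of $\calN$ and $\mu(S)$ off it; and the singleton term $s_\beta(T)$ is sent to a ``Wadge jump of order $\omega^\beta$'', raising the Borel rank of $\mu(T)$ by $\omega^\beta$. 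The identification $p\equiv_h s_\alpha(p)$ and the Veblen-style fixed-point laws relating $s_\alpha$ and $s_\beta$ for $\alpha<\beta$ are matched by the fact that a jump of order $\omega^\beta$ absorbs, up to Wadge equivalence, all jumps of order $\omega^\alpha$ with $\alpha<\beta$.

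\emph{Soundness.} One shows $T\leq_h V\Rightarrow\mu(T)\leq_W\mu(V)$ by induction on the inductive definition of $\leq_h$ recalled in Section~\ref{trees}, producing in each clause an explicit continuous $f$ with $\mu(T)(x)\leq_Q\mu(V)(f(x))$: the base cases are immediate, the clauses for $\cdot$ and the two $\sqcup$-clauses amount to combining the reductions from the inductive hypothesis along a clopen partition (using that continuous preimages of open sets are open), and the $s_\alpha$-vs-$s_\beta$ clause uses the absorption law above. The same induction, together with Observation~\ref{obs:measu}, shows $\mu(\mathcal{T}^\sqcup_\eta(Q))\subseteq\mathbf{\Delta}^0_{1+\eta}(Q^\calN)$, so the realization has the right complexity.

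\emph{Completeness of the range.} One shows by transfinite induction on $\eta$ that every $A\in\mathbf{\Delta}^0_{1+\eta}(Q^\calN)$ satisfies $A\equiv_W\mu(T)$ for a suitable term $T$. The base case $\eta=0$ is direct: a clopen $Q$-partition is $\equiv_W$-determined by the downward closure of its range, which is exactly the information carried by a term of $\mathcal{T}^\sqcup_0(Q)$. The inductive step is where the structure theory is used: a Hausdorff--Kuratowski / difference-hierarchy style analysis (carried out relative to a countable basis, for a suitably chosen presentation of $A$) expresses a $\mathbf{\Delta}^0_{1+\eta}$-partition either as a countable join of strictly simpler partitions --- hence, by the inductive hypothesis and the characterisation of the $\sigma$-join-irreducibles, as the realization of a forest term --- or as a transfinite alternating ``one-step'' sum over partitions of strictly smaller rank, which, after normalising the length of the sum by means of the fixed-point laws of the $s_\beta$, is the realization of a tree term of the form $s_\beta(U)$ or $S\cdot F$.

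\emph{Order-faithfulness, the main obstacle.} The genuinely hard direction is $\mu(T)\leq_W\mu(V)\Rightarrow T\leq_h V$, and it cannot be obtained by pure bookkeeping. The plan is to prove, by induction on the term structure and using Borel (Wadge) determinacy, a dichotomy for the realized partitions mirroring the case analysis built into the definition of $\leq_h$ (the $A\cdot B$-vs-$C\cdot D$ clause, the two $\sqcup$-clauses, and the $s_\alpha$-vs-$s_\beta$ clause), so that whenever $T\not\leq_h V$ the associated Wadge game supplies a winning strategy witnessing $\mu(T)\not\leq_W\mu(V)$. The bqo hypothesis on $Q$ is essential here: by the van Engelen--Miller--Steel theorem the Wadge quasiorder $\mathcal{W}_Q$ on Borel $Q$-partitions is itself a bqo (and, by Laver's theorem, so is each $\mathcal{T}^\sqcup_\eta(Q)$), which excludes the infinite antichains and descending chains that would otherwise obstruct the inductive matching of Wadge degrees with $h$-degrees and guarantees that the $Q$-labels compare well at every stage of the recursion. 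Combining the four parts yields the isomorphism for each $\eta<\omega_1$ and hence the full statement (this characterisation is due to \cite{km17}; the sketch indicates the structure on which Section~\ref{main} builds).
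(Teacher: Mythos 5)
Your skeleton does match the proof the paper imports from \cite{km17} (Section~\ref{charW}): realize terms by a map $\mu$ defined by recursion on the term structure, prove that $\mu$ is order-faithful and that every $\mathbf{\Delta}^0_{1+\eta}$-partition is Wadge-equivalent to a realized term, and obtain the $\mathcal{T}^\sqcup_{\omega_1}$ statement by taking unions. But the two pillars that carry the weight are asserted rather than proved, and one of them is proposed by a method that would not work. The ``completeness of the range'' step is exactly the main content of \cite{km17} (Fact~\ref{fact:km-main}, \cite[Proposition 1.9]{km17}), and your proposed route --- a Hausdorff--Kuratowski/difference-hierarchy style decomposition of an arbitrary $\mathbf{\Delta}^0_{1+\eta}$-partition into a countable join of simpler partitions or a transfinite alternating sum --- is not adequate: already for $3$-valued partitions of low Borel rank the degree structure is the homomorphism order on labeled forests (genuinely branching), not an ordinal-indexed difference hierarchy, and no HK-type analysis produces the required normal form $\sqcup_i S_i$ / $S\cdot F$ / $s_\beta(U)$. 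The actual argument rests on the conciliatory-function machinery and, in \cite{km17}, on computability-theoretic (true-stage style) constructions; nothing in your sketch substitutes for that, so this step is a genuine gap rather than a routine induction.

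A second, related gap is in the definition of $\mu$ at singleton terms. ``A Wadge jump of order $\omega^\beta$ raising the Borel rank of $\mu(T)$'' is not available as stated, because there is no total universal $\mathbf{\Sigma}^0_{1+\omega^\beta}$-measurable function; this is precisely why Duparc and \cite{km17} introduce conciliatory functions and a universal conciliatory $\mathcal{U}_{\omega^\beta}$ (Facts~\ref{fact:conciliatory} and~\ref{fact:concate} of the paper), and set $\mu(s_\beta(T))=\mu(T)\circ\mathcal{U}_{\omega^\beta}$. Without this device the recursion, the absorption laws you invoke to match the Veblen-style identities $s_\alpha s_\beta\equiv_h s_\beta$ ($\alpha<\beta$), and the complexity bound $\mu(\mathcal{T}^\sqcup_\eta(Q))\subseteq\mathbf{\Delta}^0_{1+\eta}(Q^\calN)$ (Fact~\ref{fact:mu-measu}) are all unjustified. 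Your order-faithfulness paragraph names the right tools (Borel determinacy, Laver, van Engelen--Miller--Steel), but excluding infinite antichains and descending chains does not by itself give $\mu(T)\leq_W\mu(V)\Rightarrow T\leq_h V$; in \cite[Proposition 1.7]{km17} that direction is a delicate induction interleaved with the same completeness machinery. So: right architecture, but the hard steps are missing or proposed by a method that fails.
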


The basic strategy of the proof is as follows:
First assign a natural class $\Sigma_T$ of functions to each $T\in\mathcal{T}^\sqcup_{\om_1}(Q)$, which refines known hierarchies such as the Borel hierarchy $(\mathbf{\Sigma}^0_\alpha)_{\alpha<\om_1}$ and the difference hierarchy $(\mathcal{D}_\beta(\mathbf{\Sigma}^0_\alpha))_{\alpha,\beta<\om_1}$.
The main task is to show that the hierarchy $(\Sigma_T)_{T\in\mathcal{T}^\sqcup_{\om_1}(Q)}$ is {\em ultimate} in the sense that there is no finer hierarchy of Borel functions from the viewpoint of continuous reducibility.
To achieve this, to each $T\in\mathcal{T}^\sqcup_{\om_1}(Q)$ associate a {\em $\Sigma_T$-complete} function $\mu(T)\colon\calN\to Q$ (the symbol $\Omega_T$ is used in \cite{km17}).


In order to define $\mu$, the next step of the proof strategy is to overcome the difficulty caused by the non-existence of a universal total $\mathbf{\Sigma}^0_{1+\xi}$-measurable function, and the notion of a conciliatory function was designed to solve this problem (cf.~\cite{du01,km17}).

Let $\con\colon\calN\to\calN$ be a function with $\con\circ\con=\con$.
We say that a function $f\colon\calN\to\calN$ is {\em $\con$-conciliatory} if, for any $x,y\in\calN$, $\con(x)=\con(y)$ implies $\con\circ f(x)=\con\circ f(y)$.
Similarly, a function $A\colon\calN\to Q$ is {\em $\con$-conciliatory} if, for any $x,y\in\calN$, $\con(x)=\con(y)$ implies $A(x)=A(y)$.
We say that $f,g\colon\calN\to\calN$ are {\em $\con$-equivalent} (written $f\equiv_\con g$) if $\con\circ f=\con\circ g$.
We will frequently use the following basic observation:
If $A\colon\calN\to Q$ is $\con$-conciliatory, and $f,g\colon\calN\to\calN$ are $\con$-equivalent, then $A\circ f=A\circ g$.
For $Q=2$, this notion was first introduced by \cite{du01}.
See also \cite[Sections 2.5 and 2.6]{km17} for the idea behind these definitions.
Note that, in order to avoid going back and forth between two spaces $\calN$ and $\hat{\om}^\om$ as in \cite{km17}, our definitions are slightly different from the original one ($\con$ plays a similar role to ${\sf p}$ in \cite{km17} as seen below).

If we suitably choose $\con$, then the class of $\con$-conciliatory functions has the following good property \cite{km17}:

\begin{Fact}\label{fact:conciliatory}
\begin{enumerate}
\item Any partial continuous function $g$ on $\calN$ has a {\em conciliatory total extension}; that is, there is a $\con$-conciliatory total continuous function $\hat{g}\colon\calN\to\calN$ such that $\con\circ g(x)=\con\circ\hat{g}(x)$ holds for any $x\in{\rm dom}(g)$.
\item For any countable ordinal $\xi$, there is a $\mathbf{\Sigma}^0_{1+\xi}$-measurable $\con$-conciliatory function $\mathcal{U}_\xi\colon\calN\to\calN$ which is {\em universal}; that is, for every $\mathbf{\Sigma}^0_{1+\xi}$-measurable function $f\colon\calN\to\calN$, there is a continuous function $g\colon\calN\to\calN$ such that $f$ is $\con$-equivalent to $\U_\xi\circ g$.
\item Every $\sigma$-join-irreducible Borel function $f\colon\calN\to Q$ is Wadge equivalent to a $\con$-conciliatory function.
Indeed, for any tree $T\in\mathcal{T}_{\om_1}(Q)$, there is a $\Sigma_T$-complete $\con$-conciliatory function $\mu(T)\colon\calN\to Q$.
\end{enumerate}
\end{Fact}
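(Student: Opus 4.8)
The plan is to fix a convenient idempotent $\con$ and then reduce all three items to the corresponding statements of \cite{km17}, which are proved there in a formalism that oscillates between $\calN$ and a space of \emph{conciliatory sequences}. First I would reserve a symbol of $\om$ --- say $0$ --- to play the role of a \emph{pause}, and take $\con\colon\calN\to\calN$ to be the idempotent that deletes all pauses, with a fixed convention handling the degenerate case in which only finitely many non-pause symbols occur. Then $\rng(\con)$ is a copy of the space of conciliatory sequences used in \cite{km17}, and $\con$ plays the role of the retraction ${\sf p}$ there. The key preliminary step is to record the resulting dictionary: a function on $\calN$ valued in $\calN$ or in $Q$ is $\con$-conciliatory iff, up to $\equiv_\con$, it factors as $\con$ followed by a function on $\rng(\con)$; continuous (resp.\ $\bfSig^0_{1+\xi}$-measurable) functions on $\calN$ correspond under this translation to continuous (resp.\ $\bfSig^0_{1+\xi}$-measurable) functions on $\rng(\con)$; and the Wadge reducibility and the piecewise classes $D_\xi$, $D_\xi^\wadge$ are preserved on both sides. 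Once this dictionary is in place, items (1)--(3) are the images of the corresponding results of \cite{km17}.

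For (1), I would construct $\hat g$ from a partial continuous $g$ by the standard dovetailed simulation: read the input coordinate by coordinate, run $g$, and output the pause symbol whenever $g$ has not yet committed to its next non-pause output value --- continuing to output pauses forever if $g$ diverges. Such an $\hat g$ is total and continuous by construction; it agrees with $g$ up to $\con$ on $\dom(g)$, because the inserted pauses are erased by $\con$; and it is $\con$-conciliatory, because the pause-free content of $\hat g(x)$ depends only on the pause-free content of $x$. This is the conciliatory-extension lemma of \cite{du01} (see also \cite{km17}) read through the dictionary.

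For (2), I would obtain $\U_\xi$ by transporting, via the dictionary, the universal conciliatory $\bfSig^0_{1+\xi}$-measurable function of \cite{km17}; its construction rests, just as for the classical universal $\bfSig^0_{1+\xi}$-measurable functions, on Observation \ref{obs:measu} together with an effective enumeration of a countable basis. For the universality clause, given a $\bfSig^0_{1+\xi}$-measurable $f\colon\calN\to\calN$, I would first precompose $f$ with a continuous pause-insertion (the same device as in (1)) to obtain a $\con$-conciliatory $\bfSig^0_{1+\xi}$-measurable function that equals $f$ up to $\equiv_\con$, and then feed the latter into the universal property of $\U_\xi$ to produce a continuous $g$ with $f\equiv_\con\U_\xi\circ g$.

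For (3), Theorem \ref{char1} tells us that the $\sigma$-join-irreducible Borel $Q$-partitions are, up to $\leq_W$-equivalence, precisely those named by tree terms $T\in\mathcal{T}_{\om_1}(Q)$, so it suffices to build for each such $T$ a $\con$-conciliatory $\Sigma_T$-complete $\mu(T)\colon\calN\to Q$. I would do this by transfinite recursion on the term structure of $T$, exactly as in \cite{km17}: constants give the base case; the operator $s_\beta$ is realised with the help of the universal functions from (2); the forest operations $\cdot$ and $\sqcup$ are realised by the corresponding combinations of labeled forests; and at each step $\con$-conciliatoriness is maintained by replacing the continuous ingredients with their conciliatory total extensions supplied by (1). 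The $\Sigma_T$-completeness of $\mu(T)$, and hence $f\equiv_W\mu(T)$ for every $\Sigma_T$-complete Borel $f$, then follows from the ``ultimateness'' argument of \cite{km17}, whose flipping and stretching lemmas transfer verbatim. The main obstacle --- and essentially the only point that is not bookkeeping --- is to verify that this translation is faithful: that the class of $\con$-conciliatory functions is closed under all the operations invoked in \cite{km17} (most delicately, under the forest operations used in the recursive definition of $\mu$), and that the complexity calculus controlling the hierarchy $(\Sigma_T)_T$ is unchanged when one works inside $\calN$ with $\con$ rather than inside the two-sorted setting of \cite{km17}.
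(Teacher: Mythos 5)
Your proposal takes essentially the same route as the paper: the paper states this result as a Fact imported directly from \cite{km17} (items (1)--(3) are \cite[Observation 2.19, Propositions 2.15 and 3.24, Observation 3.15]{km17}), with $\con$ chosen exactly as your pause-erasing idempotent (realized there via the homeomorphism $I\colon(\om\cup\{{\sf pass}\})^\om\to\calN$ and the map ${\sf p}$), so your dictionary plus sketches of the conciliatory extension, the universal conciliatory function, and the recursive construction of $\mu$ just spell out the internals of the cited results rather than giving a different argument.
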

For (1), see \cite[Observation 2.19]{km17}; for (2), see \cite[Proposition 2.15 and Proposition 3.24]{km17}; and for (3), see \cite[Observation 3.15]{km17}.
Regarding Fact \ref{fact:conciliatory} (3), even if $F$ is not a tree, one can ensure that $\mu(F)$ is {\em almost $\con$-conciliatory}; that is, $\con(x)=\con(y)$ implies $\mu(F)(nx)=\mu(F)(ny)$ for any $n\in\om$ and $x,y\in\calN$.

To define $\mu(T\cdot F)$ we need a conciliatory Wadge addition operation, which is denoted as $A^\to B$ in \cite{du01,km17}, but we use the symbol $A\cdot B$ in this paper.
We do not mention the explicit definition of the operation $A\cdot B$, because we only use the following special properties:

\begin{Fact}\label{fact:concate}
If $A$ is $\con$-conciliatory, and $B$ is almost $\con$-conciliatory, then $A\cdot B$ is $\con$-conciliatory.
Moreover, there are $\con$-conciliatory continuous functions $\pi_0,\pi_1\colon\calN\to\calN$ and an open set $J\subseteq\calN$ such that if $x\in J$ then $(A\cdot B)(x)=B\circ\pi_1(x)$, and if $x\not\in J$ then $(A\cdot B)(x)=A\circ\pi_0(x)$.
\end{Fact}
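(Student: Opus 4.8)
The plan is to verify the statement by unwinding the definition of the conciliatory Wadge sum $A\cdot B$ ($=A^\to B$ in \cite{du01,km17}), which we have deliberately not reproduced; so I would recall just enough of that definition to read off the listed properties. The operation $A\cdot B$ computes its value on an input $x\in\calN$ by a continuous ``reading'' procedure: scanning $x$, the procedure either witnesses at some finite stage a designated ``descent'' move into the $B$-component, or never does. In the first case a continuously decodable rearrangement $\pi_1(x)$ of (a tail of) $x$ is fed to $B$ and the output is $B(\pi_1(x))$; otherwise a decoded input $\pi_0(x)$ is fed to $A$ and the output is $A(\pi_0(x))$. Accordingly I would let $J$ be the set of $x$ for which the descent is witnessed and let $\pi_0,\pi_1\colon\calN\to\calN$ be the (globally, continuously defined) decoders. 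Then $J$ is open (membership is witnessed at a finite stage), $\pi_0,\pi_1$ are continuous, and the two displayed identities hold by definition; this disposes of the ``Moreover'' clause apart from the conciliatoriness of $\pi_0,\pi_1$.

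For the conciliatory claims I would use that $\con$ has been chosen in \cite{km17} (cf.\ Fact \ref{fact:conciliatory}) to cohere with the elementary operations appearing in the definition of $A\cdot B$ --- prepending a fixed prefix, passing to a tail, and the finitary decoding. This gives two things: (i) the set $J$ is $\con$-saturated, i.e.\ $\con(x)=\con(y)$ implies $x\in J\iff y\in J$ (whether the descent move occurs is determined by $\con(x)$); and (ii) $\pi_0$ and $\pi_1$ are $\con$-conciliatory. Both are checked by inspecting the explicit continuous witnesses used in \cite{km17}.

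Granting (i) and (ii), the first assertion follows. Suppose $\con(x)=\con(y)$; by (i), $x$ and $y$ are on the same side of $J$. If both are outside $J$, then $(A\cdot B)(x)=A(\pi_0(x))$ and $(A\cdot B)(y)=A(\pi_0(y))$; by (ii) $\con(\pi_0(x))=\con(\pi_0(y))$, so $A(\pi_0(x))=A(\pi_0(y))$ since $A$ is $\con$-conciliatory. If both are in $J$, one must be more careful, because $B$ is only \emph{almost} $\con$-conciliatory: here I would exploit that the procedure reads at least one symbol before it can descend, so that $\pi_1(x)$ and $\pi_1(y)$ begin with the same symbol $n$ and are of the form $nz_x$, $nz_y$ with $\con(z_x)=\con(z_y)$ (this last step extracts $\con(z_x)=\con(z_y)$ from $\con(\pi_1(x))=\con(\pi_1(y))$ using the coherence of $\con$ with an initial coordinate). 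Then $B(\pi_1(x))=B(nz_x)=B(nz_y)=B(\pi_1(y))$ by almost $\con$-conciliatoriness of $B$, as required.

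The one genuinely non-routine point is exactly this last matching of ``almost'' against ``genuine'' $\con$-conciliatoriness: one has to confirm that the definition of $A\cdot B$ always consumes a symbol before switching to the $B$-side, and that $\con$ is set up so that equality of $\con$-images is inherited after stripping an initial coordinate. Everything else --- the clopen case split, continuity of $\pi_0,\pi_1$, and the composition argument for the $A$-side --- is a straightforward reading of the definitions; in the paper we will simply cite \cite{du01,km17} for these structural properties, since the definition of $A\cdot B$ itself is not reproduced here.
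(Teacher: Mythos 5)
The paper itself gives no proof of this fact: it simply cites \cite[Observation 3.11]{km17} (after explaining how the map $\con$ is extracted from the coding apparatus of \cite{km17}), and your sketch does essentially the same thing, since every load-bearing step --- the conciliatoriness of $\pi_0,\pi_1$, the $\con$-saturation of $J$, and the first-coordinate bookkeeping needed to get by with mere almost-conciliatoriness of $B$ --- is explicitly deferred to the definitions in \cite{km17}. Your unwinding of the sum operation and your identification of the one delicate point (reconciling ``almost'' with genuine $\con$-conciliatoriness on the $B$-side) are consistent with how the operation behaves there, so the proposal matches the paper's treatment in substance.
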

For this fact, see \cite[Observation 3.11]{km17}.
Let us briefly explain how to obtain such a function $\con$ since our notation is slightly different from \cite{km17}:
For a homeomorphism $I\colon(\om\cup\{{\sf pass}\})^\om\to\calN$ and a map $z\mapsto z^{\sf p}\colon(\om\cup\{{\sf pass}\})^\om\to\om^{\leq\om}$ in \cite[Section 2.5]{km17}, consider $\delta(x)=x\mapsto (I^{-1}(x))^{\sf p}$, which is a total surjection from $\calN$ to $\om^{\leq\om}$, and we define $\natural(x)=I(\delta(x){\sf pass}^\om)$.
Then, $\natural$-equivalence is the same as $\equiv_{\sf p}$ in \cite[Section 2.6]{km17}, so this $\con$ works.
Hereafter, we fix such a function $\con$ satisfying Facts \ref{fact:conciliatory} and \ref{fact:concate}, and we never use the explicit definition of $\con$.
We also use the terminology {\em conciliatory} instead of $\con$-conciliatory.

For $T\in\mathcal{T}_{\om_1}^\sqcup(Q)$, the definition of $\mu$ proceeds by induction on the rank of $T$ (which is defined by induction scheme from the end of the previous section) so that the following holds: if $T\equiv_hs_\alpha(q)$ for $q\in Q$ then $\mu(T)$ is the constant function $\lambda x.q$ on ${\mathcal{N}}$; if $T=s_\alpha(V)$ for some $V$ distinct (modulo $\equiv_h$) from all $q\in Q$ then $\mu(T)=\mu(V)\circ\mathcal{U}_{\omega^\alpha}$;
if $T=\sqcup_iS_i$ then $\mu(T)=\bigoplus_i\mu(S_i)$, where $(\bigoplus_iS_i)(nx)=S_n(x)$.
if $T=S\cdot F$ for some tree $S$ and forest $F$ then $\mu(T)=\mu(S)\cdot\mu(F)$.
This definition fulfills Fact \ref{fact:conciliatory} (3), and also the following fact by \cite[Lemma 3.9, Observation 3.16, and Lemma 3.22]{km17}.

\begin{Fact}\label{fact:mu-measu}
For any ordinal $\xi$, if $T\in\mathcal{T}^\sqcup_\xi(Q)$ then $\mu(T)\in\mathbf{\Delta}^0_{1+\xi}(Q^\calN)$.
\end{Fact}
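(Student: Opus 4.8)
The plan is to prove Fact~\ref{fact:mu-measu} by transfinite induction on $\xi$, with a secondary induction on the rank of the term $T$ inside $\mathcal{T}^\sqcup_\xi(Q)$. First I would fix $\xi<\omega_1$, write it in Cantor normal form as $\xi=\omega^{\alpha_0}+\xi_1$ with $\omega^{\alpha_0}\geq\xi_1$, and record that $\mathcal{T}^\sqcup_\xi(Q)=\mathcal{T}^\sqcup_{\omega^{\alpha_0}}(Q')$ for $Q'=\mathcal{T}_{\xi_1}(Q)$, where either $\xi_1<\xi$, or $\xi=\omega^{\alpha_0}$ and $Q'=Q$. Then I would run through the four clauses defining $\mu(T)$ and, for each $q\in Q$, exhibit $\mu(T)^{-1}(q)$ as obtained from simpler sets by operations under which $\mathbf{\Delta}^0_{1+\xi}$ is closed: finite Boolean operations, preimages by continuous maps, and countable unions indexed by a clopen partition of $\calN$.

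For the clauses producing a singleton term: if $\mu(T)$ is constant (the subcase $T\equiv_h s_\alpha(q)$, $q\in Q$) it is $\mathbf{\Delta}^0_1\subseteq\mathbf{\Delta}^0_{1+\xi}$. Otherwise $\mu(T)=\mu(V)\circ\mathcal{U}_{\omega^\gamma}$, where either $\gamma=\alpha_0$ and $T=s_{\alpha_0}(V)$ for some $V\in Q'$, in which case $\xi_1<\xi$ and $V\in\mathcal{T}^\sqcup_{\xi_1}(Q)$, so $\mu(V)^{-1}(q)\in\mathbf{\Delta}^0_{1+\xi_1}$ by the primary induction hypothesis; or $\gamma=\beta<\alpha_0$ and $T=s_\beta(V)$ for a tree term $V$ of smaller rank in $\mathcal{T}^\sqcup_\xi(Q)$, so $\mu(V)^{-1}(q)\in\mathbf{\Delta}^0_{1+\xi}$ by the secondary induction hypothesis. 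In both cases $\mathcal{U}_{\omega^\gamma}$ is $\mathbf{\Sigma}^0_{1+\omega^\gamma}$-measurable by Fact~\ref{fact:conciliatory}(2), so Observation~\ref{obs:measu}(1), applied on the $\mathbf{\Sigma}$ side and, after complementation, on the $\mathbf{\Pi}$ side, yields $\mu(T)^{-1}(q)\in\mathbf{\Delta}^0_{1+\omega^\gamma+\delta}$ with $\delta=\xi_1$ in the first case and $\delta=\xi$ in the second. Since $1+\omega^{\alpha_0}+\xi_1=1+\xi$ directly, and $1+\omega^\beta+\xi=1+\xi$ because $\beta$ lies strictly below the leading exponent $\alpha_0$ of $\xi$, we get $\mu(T)^{-1}(q)\in\mathbf{\Delta}^0_{1+\xi}$ in either case.

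For the compound clauses I would use the secondary induction hypothesis together with the special properties of the two operations. If $T=\sqcup_i S_i$, then $\mu(T)^{-1}(q)=\bigcup_n(\{y:y(0)=n\}\cap\sigma^{-1}(\mu(S_n)^{-1}(q)))$, where $\sigma(y)=(y(1),y(2),\dots)$ is continuous and $\mu(S_n)^{-1}(q)\in\mathbf{\Delta}^0_{1+\xi}$; since the sets $\{y:y(0)=n\}$ form a clopen partition, both this union and (by the same description with $\calN\setminus\mu(S_n)^{-1}(q)$ in place of $\mu(S_n)^{-1}(q)$) its complement are $\mathbf{\Sigma}^0_{1+\xi}$, so $\mu(T)^{-1}(q)\in\mathbf{\Delta}^0_{1+\xi}$. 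If $T=S\cdot F$, then $\mu(T)=\mu(S)\cdot\mu(F)$ with $\mu(S)^{-1}(q),\mu(F)^{-1}(q)\in\mathbf{\Delta}^0_{1+\xi}$, and Fact~\ref{fact:concate} supplies continuous $\pi_0,\pi_1$ and an open $J$ with $\mu(T)^{-1}(q)=(J\cap\pi_1^{-1}(\mu(F)^{-1}(q)))\cup((\calN\setminus J)\cap\pi_0^{-1}(\mu(S)^{-1}(q)))$; as the occurrence of the symbol $\cdot$ already forces $\xi\geq 1$, we have $J\in\mathbf{\Sigma}^0_1\subseteq\mathbf{\Delta}^0_{1+\xi}$, so the right-hand side lies in $\mathbf{\Delta}^0_{1+\xi}$.

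The hard part will not be any single case --- each is a routine closure argument for $\mathbf{\Delta}^0_{1+\xi}$ --- but the ordinal bookkeeping that glues them together: one has to align the term structure of $\mathcal{T}^\sqcup_\xi(Q)$ with the Cantor normal form of $\xi$ (so that $\mathcal{T}^\sqcup_\xi(Q)=\mathcal{T}^\sqcup_{\omega^{\alpha_0}}(\mathcal{T}_{\xi_1}(Q))$ with $\xi_1<\xi$, which is what licenses the primary induction hypothesis in the atomic case), verify the absorptions $\omega^{\alpha_0}+\xi_1=\xi$ and $\omega^\beta+\xi=\xi$ for $\beta<\alpha_0$, and check that every exponent appearing (the $\omega^\gamma$ in the singleton clauses) is of the form to which Fact~\ref{fact:conciliatory}(2) and Observation~\ref{obs:measu}(1) apply. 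This is in essence the content of \cite[Lemma~3.9, Observation~3.16, Lemma~3.22]{km17}.
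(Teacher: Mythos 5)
Your induction is correct and is essentially the argument the paper outsources to \cite[Lemma~3.9, Observation~3.16, Lemma~3.22]{km17} --- the paper gives no proof of Fact~\ref{fact:mu-measu} beyond that citation, and your case analysis (constants; $s_{\alpha_0}$ applied over $\mathcal{T}_{\xi_1}(Q)$ handled by the primary induction plus Observation~\ref{obs:measu}(1); $s_\beta$ with $\beta<\alpha_0$ handled by the absorption $\omega^\beta+\xi=\xi$; $\sqcup$ via the clopen partition $\{y:y(0)=n\}$; and $\cdot$ via Fact~\ref{fact:concate}) is exactly that content. One cosmetic slip: for $\xi=\omega^{\alpha_0}+\xi_1$ the inequality $\omega^{\alpha_0}\geq\xi_1$ can fail (e.g.\ $\xi=\omega\cdot 3$), but your argument never uses it --- it only needs $\xi_1<\xi$ and that $\alpha_0$ bounds the exponents occurring in $\xi_1$, both of which hold.
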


Isomorphisms between the quotient posets from Theorem \ref{char1} are induced by this function $\mu\colon\mathcal{T}^\sqcup_{\omega_1}(Q)\to\mathbf{\Delta}^1_1(Q^{\mathcal{N}})$.
Namely, $\mu$ is an embedding \cite[Proposition 1.7]{km17}, and surjective \cite[Proposition 1.9]{km17} in the following sense:
\begin{Fact}[\cite{km17}]\label{fact:km-main}
For any $T,V\in\mathcal{T}^\sqcup_{\omega_1}(Q)$, $T\leq_hV$ if and only if $\mu(T)\leq_W\mu(V)$.
For any $\mathbf{\Delta}^0_{1+\eta}$-function $A\colon \calN\to Q$ there is $T\in\mathcal{T}^\sqcup_{\eta}(Q)$ such that $A\equiv_W\mu(T)$.
\end{Fact}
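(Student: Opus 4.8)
Both clauses together constitute the main theorem of \cite{km17}, so the task is to recover the strategy used there. The first clause splits into \emph{soundness} (if $T\leq_hV$ then $\mu(T)\leq_W\mu(V)$) and \emph{completeness} (its converse), and the second clause is the \emph{essential surjectivity} of $\mu$ onto the $\leq_W$-degrees of $\mathbf{\Delta}^0_{1+\eta}(Q^\calN)$. I would establish soundness and essential surjectivity first, as these are the ``constructive'' halves, and isolate completeness as the place where determinacy enters.

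For soundness I would induct on the well-founded rank of $T$ coming from the term structure of $\mathcal{T}^\sqcup_{\omega_1}(Q)$, unfolding the inductive clauses defining $\leq_h$ and producing a witnessing continuous reduction in each. The base cases ($T\equiv_hs_\alpha(q)$, or $p\leq_hq$ with $p\leq_Qq$) involve constant $\mu$'s and are immediate. For singletons $s_\alpha(U)\leq_hs_\beta(V)$ one uses $\mu(s_\alpha(U))=\mu(U)\circ\Uom{\alpha}$ together with the universality and fixed-point behaviour of $\Uom{\alpha}$ from Fact~\ref{fact:conciliatory}(2): when $\alpha=\beta$, transport a reduction witnessing $U\leq_hV$ through $\Uom{\alpha}$; when $\alpha\neq\beta$, use that iterating $s_\beta$ past a fixed point absorbs the other index, which on the function side reflects the property $s_0(s_1(T))\equiv_hs_1(T)$. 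For concatenations $A\cdot B\leq_hC\cdot D$ one invokes the structure of the conciliatory Wadge sum from Fact~\ref{fact:concate}, precomposing with $\pi_0$ or $\pi_1$ according to the open set $J$ and splitting along the two defining subcases of $\leq_h$; the fact that $\mu$ always outputs conciliatory (resp.~almost conciliatory) data is exactly what makes the subcases close. The $\sqcup$-clauses reduce to the coordinatewise behaviour of $\bigoplus$.

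For essential surjectivity, given $A\in\mathbf{\Delta}^0_{1+\eta}(Q^\calN)$ I would first reduce to the $\sigma$-join-irreducible case: a general $A$ is a countable supremum of $\sigma$-join-irreducibles in $(\mathbf{\Delta}^1_1(Q^\calN);\leq_W)$, and since $\sqcup$ realizes countable suprema in $\mathcal{T}^\sqcup_\eta(Q)$ it suffices to realize each piece and take $\sqcup_iS_i$. For $\sigma$-join-irreducible $A$, use Fact~\ref{fact:conciliatory}(3) to replace $A$, up to $\equiv_W$, by a conciliatory function, and then run a layer-peeling induction on Borel rank: isolate the outermost $\omega^\alpha$-block of the complexity of $A$, write $A\equiv_WA'\circ\Uom{\alpha}$ with $A'$ of strictly smaller complexity using universality of $\Uom{\alpha}$, and within a fixed $\omega^\alpha$-block decompose the difference-hierarchy structure of $A$ into a finite iteration of conciliatory Wadge sums of the shape $\mu(S)\cdot\mu(F)$; apply the induction hypothesis to the components and reassemble the combinatorial data into a term $T$, the rank bookkeeping together with Fact~\ref{fact:mu-measu} ensuring $T\in\mathcal{T}^\sqcup_\eta(Q)$.

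The main obstacle is completeness: $\mu(T)\leq_W\mu(V)\Rightarrow T\leq_hV$. The plan is to attach to each $T$ a pointclass $\Sigma_T$ of $Q$-valued functions refining the Borel and difference hierarchies, show $\mu(T)$ is $\Sigma_T$-complete (membership is Fact~\ref{fact:mu-measu}, completeness being built into the construction of $\mu$), and then show that the hierarchy $(\Sigma_T)_T$ is \emph{ultimate}, i.e.~as fine as continuous reducibility permits. Concretely, one fixes a continuous $f$ with $\mu(T)(x)\leq_Q\mu(V)(f(x))$ and argues by induction on the pair $(T,V)$, analyzing $f$ through the outermost structure of $V$ --- using the preimage $f^{-1}(J)$ of the open set $J$ of Fact~\ref{fact:concate} to split a concatenation, and the universality clause to peel off an $s_\beta$-layer --- so as to extract a monotone, label-respecting map $T\to V$; the remaining case, where $f$ respects none of these splittings, is excluded by a Wadge-style game (Borel determinacy, in the $Q$-labelled form underlying Fact~\ref{fact:bqo}), which converts a failure of the combinatorial embedding into an honest topological diagonalization against every continuous $f$. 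Keeping the game analysis synchronized with the conciliatory-function bookkeeping through the transfinite induction is the technical heart of \cite{km17}.
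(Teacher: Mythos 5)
You are reconstructing a statement that this paper does not prove at all: Fact~\ref{fact:km-main} is imported from \cite{km17} (Propositions 1.7 and 1.9 there), and the surrounding text only summarizes the strategy (attach a class $\Sigma_T$ to each term, show $\mu(T)$ is $\Sigma_T$-complete, show the resulting hierarchy is ultimate). Your outline reproduces that summary, and your treatment of soundness ($T\leq_hV\Rightarrow\mu(T)\leq_W\mu(V)$ by induction on the term structure, handling the $s_\alpha$-clauses through $\mathcal{U}_{\omega^\alpha}$ and universality, the concatenation clauses through Fact~\ref{fact:concate}, and $\sqcup$ through $\bigoplus$) is essentially the right shape. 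The problem is that the two halves that carry the actual weight of the Fact are not argued but only gestured at, and the gestures point in directions that do not work as stated.

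For surjectivity you propose to peel off the outermost $\omega^\alpha$-block and ``decompose the difference-hierarchy structure of $A$ into a finite iteration of conciliatory Wadge sums''; but for $Q$-valued functions (already for $3$-partitions) the difference hierarchy over $\mathbf{\Sigma}^0_\alpha$ does not exhaust the relevant ambiguous classes, and producing a forest-shaped decomposition of an arbitrary $\mathbf{\Delta}^0_{1+\eta}$-measurable $A$ is precisely the content of the theorem, not bookkeeping; in \cite{km17} this is done by an effective transfinite analysis --- the ``heavy use of classical computability theory'' (true-stage/iterated-jump machinery) that the present paper explicitly highlights and that your sketch omits entirely. Your preliminary reduction of $A$ to a countable join of $\sigma$-join-irreducibles also silently uses well-foundedness of the $Q$-Wadge order, which rests on the determinacy-based theorem of van Engelen--Miller--Steel \cite{ems87}, not on Fact~\ref{fact:bqo}. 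For completeness ($\mu(T)\leq_W\mu(V)\Rightarrow T\leq_hV$) you defer the critical case to ``a Wadge-style game (Borel determinacy, in the $Q$-labelled form underlying Fact~\ref{fact:bqo})''; Fact~\ref{fact:bqo} is a separability statement with no determinacy content, and for bqo-valued functions there is no analogue of Wadge's lemma or of the semi-linear-ordering dichotomy to power such a game: the determinacy-based result that is available (van Engelen--Miller--Steel) yields only that the structure is a bqo, which, as the paper stresses, ``is far from a characterisation''. So the non-reducibility direction and the exhaustiveness of the $(\Sigma_T)$-hierarchy --- the heart of \cite{km17} --- remain unproved in your outline.
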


\section{Main result} \label{main}

In this section we formulate and prove the main result of this paper for the reducibilities $\leq_{1+\xi}$, $\xi<\omega_1$.
This result which clearly implies all the results mentioned in Introduction, is formulated as follows.

\begin{Theorem}\label{mainth}
For all  $\xi,\eta<\omega_1$ and bqo $Q$,
\[(\mathcal{T}^\sqcup_{\eta}(Q);\leq_h)\simeq(\mathbf{\Delta}^0_{1+\xi+\eta}(Q^\mathcal{N});\leq_{1+\xi})\simeq(\mathbf{\Delta}^0_{1+\xi+\eta}(Q^\mathcal{N});\leq^\wadge_{1+\xi}).\]  
\end{Theorem}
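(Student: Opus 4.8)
The plan is to transport the characterisation of Theorem \ref{char1} along the ``level shift'' induced by the universal conciliatory function $\mathcal{U}_\xi$ of Fact \ref{fact:conciliatory}(2). For $T\in\mathcal{T}^\sqcup_\eta(Q)$ set $\nu_\xi(T)=\mu(T)\circ\mathcal{U}_\xi$ (with $\mathcal{U}_0=\mathrm{id}$, so the case $\xi=0$ is just Theorem \ref{char1}). Since $\mu(T)\in\mathbf{\Delta}^0_{1+\eta}(Q^\calN)$ by Fact \ref{fact:mu-measu} and $\mathcal{U}_\xi$ is $\mathbf{\Sigma}^0_{1+\xi}$-measurable, Observation \ref{obs:measu}(1) gives $\nu_\xi(T)\in\mathbf{\Delta}^0_{1+\xi+\eta}(Q^\calN)$, so $\nu_\xi$ is well defined. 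I would first record the combinatorial counterpart of $\nu_\xi$: writing $\xi=\om^{\alpha_0}+\cdots+\om^{\alpha_n}$ in Cantor normal form and putting $\hat s_\xi=s_{\alpha_0}\circ\cdots\circ s_{\alpha_n}\colon\mathcal{T}^\sqcup_\eta(Q)\to\mathcal{T}^\sqcup_{\xi+\eta}(Q)$, the clauses defining $\mu$ (notably $\mu(s_\alpha(V))=\mu(V)\circ\mathcal{U}_{\om^\alpha}$) together with the universality of the $\mathcal{U}_{\om^{\alpha_i}}$ yield $\mu(\hat s_\xi(T))\equiv_W\nu_\xi(T)$, while iterating the clause $s_\alpha(U)\leq_h s_\alpha(V)\iff U\leq_h V$ in the definition of $\leq_h$ shows $\hat s_\xi(T)\leq_h\hat s_\xi(V)\iff T\leq_h V$.

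The heart of the argument is that $\nu_\xi$ induces an order-embedding into both target structures. For the embedding I would prove $T\leq_h V\Rightarrow\nu_\xi(T)\leq^\wadge_{1+\xi}\nu_\xi(V)$: by Fact \ref{fact:km-main} there is a continuous $g$ with $\mu(T)(y)\leq_Q\mu(V)(g(y))$; now $g\circ\mathcal{U}_\xi$ is $\mathbf{\Sigma}^0_{1+\xi}$-measurable, so by universality there is a continuous $h$ with $g\circ\mathcal{U}_\xi\equiv_\con\mathcal{U}_\xi\circ h$, and conciliatoriness of $\mu(V)$ (for $V$ not $\sigma$-join-irreducible, the almost-conciliatory variant, which only requires tracking the first coordinate of the $\bigoplus$-decomposition) gives $\mu(T)(\mathcal{U}_\xi(x))\leq_Q\mu(V)(\mathcal{U}_\xi(h(x)))$ for all $x$; thus $\nu_\xi(T)\leq_W\nu_\xi(V)$ is witnessed by the continuous map $h$, which lies in $D^\wadge_{1+\xi}$. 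For the reflection I would prove $\nu_\xi(T)\leq_{1+\xi}\nu_\xi(V)\Rightarrow T\leq_h V$: if $\psi$ is a $\mathbf{\Delta}^0_{1+\xi}$-function witnessing the reduction, Observation \ref{obs:measu}(3), applied to the $D_{1+\xi}$-function $\psi$ and the $\mathbf{\Sigma}^0_{1+\xi}$-measurable $\mathcal{U}_\xi$, shows $\mathcal{U}_\xi\circ\psi$ is again $\mathbf{\Sigma}^0_{1+\xi}$-measurable; universality together with conciliatoriness of $\mu(V)$ then collapses the reduction to the continuous one $\nu_\xi(T)\leq_W\nu_\xi(V)$, whence $\mu(\hat s_\xi(T))\leq_W\mu(\hat s_\xi(V))$, so $\hat s_\xi(T)\leq_h\hat s_\xi(V)$ by Fact \ref{fact:km-main}, so $T\leq_h V$. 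Since $\leq^\wadge_{1+\xi}\subseteq\leq_{1+\xi}$, these two implications combine to $T\leq_h V\iff\nu_\xi(T)\leq^\wadge_{1+\xi}\nu_\xi(V)\iff\nu_\xi(T)\leq_{1+\xi}\nu_\xi(V)$, so $\nu_\xi$ is an order-embedding of $(\mathcal{T}^\sqcup_\eta(Q);\leq_h)$ into each of the two target quasiorders.

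It then remains to prove surjectivity: every $A\in\mathbf{\Delta}^0_{1+\xi+\eta}(Q^\calN)$ satisfies $A\equiv^\wadge_{1+\xi}\nu_\xi(T)$ for some $T\in\mathcal{T}^\sqcup_\eta(Q)$; since $\leq^\wadge_{1+\xi}\subseteq\leq_{1+\xi}$ this yields surjectivity onto both quotients, and with the previous paragraph it completes the proof that all three structures are isomorphic. The natural route would be to decompose $A=B\circ f$ with $f$ a $\mathbf{\Sigma}^0_{1+\xi}$-measurable function and $B\in\mathbf{\Delta}^0_{1+\eta}(Q^\calN)$, so that, taking $T$ with $\mu(T)\equiv_W B$ and applying universality and conciliatoriness as above, $f\equiv_\con\mathcal{U}_\xi\circ g$ gives $A\leq^\wadge_{1+\xi}\nu_\xi(T)$, supplemented by the matching reduction in the other direction; equivalently, using Fact \ref{fact:km-main} to replace $A$ by some $\mu(R)$ with $R\in\mathcal{T}^\sqcup_{\xi+\eta}(Q)$, one shows on the forest side that every such $R$ is $\equiv^\wadge_{1+\xi}$-equivalent to $\hat s_\xi(T)$ for a suitable $T$. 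I expect this step to be the main obstacle: it is exactly the assertion that a $\mathbf{\Delta}^0_{1+\xi}$-reduction can absorb precisely the bottom $\xi$-many operator layers of an iterated $Q$-labelled forest, and pinning it down should use the quantitative bound of Observation \ref{obs:measu}(2) (that $\mathbf{\Sigma}^0_{1+\beta}(\calN,\calN)\subseteq D_{(1+\beta)\cdot\om}$), an induction on the Cantor normal form of $\xi$, and the same conciliatory first-coordinate bookkeeping used for the forest case in the embedding.
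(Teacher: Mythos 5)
Your embedding/reflection argument is sound and is essentially the paper's Lemma \ref{mut}: the map you call $\nu_\xi(T)=\mu(T)\circ\mathcal{U}_\xi$ coincides with $\mu s^*_\xi(T)$, the forward direction follows from $T\leq_h V\iff s^*_\xi(T)\leq_h s^*_\xi(V)$ plus Fact \ref{fact:km-main} (your re-derivation via universality is fine but unnecessary), and the reflection is exactly the collapse of a $\mathbf{\Delta}^0_{1+\xi}$-reduction to a continuous one via Observation \ref{obs:measu}(3), universality of $\mathcal{U}_\xi$, and conciliatoriness of $\mu(V)$. The genuine gap is surjectivity, which you explicitly leave as an expectation: the claim that every $R\in\mathcal{T}^\sqcup_{\xi+\eta}(Q)$ satisfies $\mu(R)\equiv^\wadge_{1+\xi}\mu s^*_\xi(T)$ for some $T\in\mathcal{T}^\sqcup_\eta(Q)$ is precisely the paper's Lemma \ref{mut2}, and it is the heart of the theorem, not a routine supplement. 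The paper proves it by induction on the term structure of $R$, first for $\xi=\om^\alpha$, with a case analysis on the outermost constructor: the case $R=A\cdot B$ collapses the tree-concatenation to a disjoint union $L\sqcup H$ (using the open-set splitting of Fact \ref{fact:concate}); the case $R=s_\gamma(T)$ splits into $\gamma>\alpha$, $\gamma=\alpha$, $\gamma<\alpha$, where the last case needs Observation \ref{obs:measu}(2) and a fresh universality/conciliatoriness argument, plus first-coordinate bookkeeping when the witness is a forest. The general $\xi$ is then handled by peeling off the Cantor normal form one exponent at a time, and the converse reductions in that composition step require an additional commuting lemma (the paper's Lemma \ref{lem:commutative}) allowing a $\mathbf{\Delta}^0_{1+\om^\beta}$-piecewise continuous map to be pushed past $\mathcal{U}_{\om^\alpha}$ at the cost of a $\mathbf{\Delta}^0_{1+\om^\alpha+\om^\beta}$-piecewise continuous one; none of this appears in your sketch.

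Your alternative route, decomposing an arbitrary $A\in\mathbf{\Delta}^0_{1+\xi+\eta}(Q^\calN)$ as $B\circ f$ with $f$ a $\mathbf{\Sigma}^0_{1+\xi}$-measurable function and $B\in\mathbf{\Delta}^0_{1+\eta}(Q^\calN)$, is itself an unproved and nontrivial assertion: the set-level analogue follows from universal-set arguments, but here you need a single $f$ working simultaneously for all countably many pieces $A^{-1}(q)$ together with a \emph{total} $\mathbf{\Delta}^0_{1+\eta}$-measurable $B$ on $\calN$, and producing such a pair is of the same order of difficulty as the statement you are trying to prove (this is why the paper works on the forest side instead). Finally, note that the case $\eta=0$ does not fit your scheme at all (Lemma \ref{mut2} requires $\eta\neq 0$); the paper treats it separately (Lemma \ref{eta0}) by a direct argument identifying $(\mathbf{\Delta}^0_{1+\xi}(Q^\calN);\leq_{1+\xi})$ with the domination order $Q^\sqcup$ via $A\mapsto A(\calN)$, using that the needed reduction can be taken $\mathbf{\Delta}^0_{1+\xi}$-piecewise constant; your proposal omits this case.
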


\begin{Corollary}\label{mainth-cor}
For all  $\alpha,\beta,\eta<\omega_1$ and bqo $Q$,
\[(\mathbf{\Delta}^0_{1+\alpha+\eta}(Q^\mathcal{N});\leq_{1+\alpha})\simeq(\mathbf{\Delta}^0_{1+\beta+\eta}(Q^\mathcal{N});\leq_{1+\beta}).\]  
\end{Corollary}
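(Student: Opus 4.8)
The plan is to read Corollary \ref{mainth-cor} off Theorem \ref{mainth} by composing isomorphisms, exploiting the fact that the intermediate structure in Theorem \ref{mainth} --- the iterated $Q$-labeled forests $(\mathcal{T}^\sqcup_\eta(Q);\leq_h)$ --- does not depend on the reducibility parameter.

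First I would fix $\eta<\omega_1$ and the bqo $Q$, and apply Theorem \ref{mainth} twice: once with $\xi:=\alpha$, which yields an isomorphism
\[\Phi_\alpha\colon (\mathcal{T}^\sqcup_\eta(Q);\leq_h)\ \xrightarrow{\ \sim\ }\ (\mathbf{\Delta}^0_{1+\alpha+\eta}(Q^\mathcal{N});\leq_{1+\alpha}),\]
and once with $\xi:=\beta$, which yields
\[\Phi_\beta\colon (\mathcal{T}^\sqcup_\eta(Q);\leq_h)\ \xrightarrow{\ \sim\ }\ (\mathbf{\Delta}^0_{1+\beta+\eta}(Q^\mathcal{N});\leq_{1+\beta}).\]
Since isomorphism of quasi-ordered structures is an equivalence relation --- reflexive, symmetric (invert the bijection), and transitive (compose bijections) --- the map $\Phi_\beta\circ\Phi_\alpha^{-1}$ witnesses
\[(\mathbf{\Delta}^0_{1+\alpha+\eta}(Q^\mathcal{N});\leq_{1+\alpha})\ \simeq\ (\mathbf{\Delta}^0_{1+\beta+\eta}(Q^\mathcal{N});\leq_{1+\beta}),\]
which is exactly the assertion of the corollary. (An entirely parallel argument using the third structure $(\mathbf{\Delta}^0_{1+\xi+\eta}(Q^\mathcal{N});\leq^\wadge_{1+\xi})$ in Theorem \ref{mainth} yields the analogous identification for the $D^\wadge$-variants, and mixed isomorphisms between $\leq_{1+\alpha}$- and $\leq^\wadge_{1+\beta}$-structures follow the same way.)

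There is no real obstacle here: all the substantive work is carried out in the proof of Theorem \ref{mainth}, and this corollary is a bookkeeping step. The only point worth emphasising is the conceptual reason the argument closes --- Theorem \ref{mainth} computes each lifted degree structure of $\mathbf{\Delta}^0_{1+\xi+\eta}$-measurable $Q$-partitions under $\leq_{1+\xi}$ \emph{absolutely}, in terms of $\mathcal{T}^\sqcup_\eta(Q)$, whose definition is independent of $\xi$; hence the shift in the Borel rank $1+\xi+\eta$ exactly compensates the shift in the reducibility level $1+\xi$, and the two structures coincide up to isomorphism. One could instead attempt a ``direct'' proof (say, assuming without loss of generality $\alpha\leq\beta$ and transporting along suitable $\mathbf{\Sigma}^0$-measurable isomorphisms between the two levels), but this would amount to re-proving Theorem \ref{mainth}, so the composition argument above is the economical route.
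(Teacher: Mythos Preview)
Your proposal is correct and matches the paper's approach: the corollary is stated immediately after Theorem~\ref{mainth} with no separate proof, precisely because it follows by applying the theorem with $\xi=\alpha$ and $\xi=\beta$ and composing the resulting isomorphisms through the common structure $(\mathcal{T}^\sqcup_\eta(Q);\leq_h)$. Your added remarks about the $\leq^\wadge$-variant and the conceptual explanation are accurate and in the spirit of the paper.
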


By Corollary \ref{cor:more-spaces}, our results extend to various spaces other than Baire space $\calN$.
For instance, if $X$ is $d$-dimensional Euclidean space $\mathbb{R}^d$, then for all $\alpha,\beta,\eta<\omega_1$ with $\alpha,\beta\geq 2$, we have $(\mathcal{T}^\sqcup_{\eta}(Q);\leq_h)\simeq(\mathbf{\Delta}^0_{1+\alpha+\eta}(Q^{X});\leq_{1+\alpha})\simeq(\mathbf{\Delta}^0_{1+\beta+\eta}(Q^{X});\leq_{1+\beta})$.
Similarly, if $X$ is e.g.~Hilbert cube $[0,1]^\mathbb{N}$, infinite dimensional  separable Hilbert space $\ell_2$, or the function space $\mathcal{C}([0,1],\mathbb{R})$, then for all $\alpha,\beta,\eta<\omega_1$ with $\alpha,\beta\geq\om$, we have $(\mathcal{T}^\sqcup_{\eta}(Q);\leq_h)\simeq(\mathbf{\Delta}^0_{1+\alpha+\eta}(Q^{X});\leq_{1+\alpha})\simeq(\mathbf{\Delta}^0_{1+\beta+\eta}(Q^{X});\leq_{1+\beta})$.




The proof of main result proceeds by induction on $\xi$. Note that for $\xi=0$ the assertion coincides with Theorem \ref{char1}. Let us explain what happens in the simplest case $\eta=0$.

First we observe that any qo induces a kind of free $\sigma$-semilattice $Q^\sqcup$  which we define as the qo $(Q^*;\leq^*)$ where $Q^*$ is the set of non-empty countable subsets of $Q$ with the so called {\em domination qo} defined by $S\leq^*R$ iff $\forall s\in S\exists r\in R(s\leq_Q r)$. Note that the operation $\bigsqcup$ of countable supremum in $Q^\sqcup$ is induced by the operation of countable union in $Q^*$ (Categorical properties of $Q\mapsto Q^\sqcup$ and characterisation of some algebras expanding $(\mathcal{T}_{\om^\alpha}^\sqcup;\leq_h)$ as free structures are considered in \cite{s18}).

Since $\mathcal{T}_0$ is the identity operator, the case $\eta=0$ reduces to the following assertion.

\begin{Lemma}\label{eta0}
 For any bqo $Q$, $Q^\sqcup\simeq(\mathbf{\Delta}^0_{1+\xi}(Q^\mathcal{N});\leq_{1+\xi})\simeq(\mathbf{\Delta}^0_{1+\xi}(Q^\mathcal{N});\leq_{1+\xi}^\wadge)$.
\end{Lemma}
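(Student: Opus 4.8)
The plan is to show that the identity between $\mathbf{\Delta}^0_{1+\xi}(Q^\calN)$ under the two reducibilities $\leq_{1+\xi}$ and $\leq^\wadge_{1+\xi}$ already collapses onto $Q^\sqcup$, and to do this by exhibiting explicit maps in both directions. For one direction, given a countable nonempty $S \subseteq Q$, send it to the $Q$-partition $A_S$ of $\calN$ defined by splitting $\calN$ into countably many clopen pieces $N_i$ and letting $A_S$ be constant with value the $i$-th element of $S$ on $N_i$; this $A_S$ is clearly $\mathbf{\Delta}^0_1 \subseteq \mathbf{\Delta}^0_{1+\xi}$. For the other direction, given $A \in \mathbf{\Delta}^0_{1+\xi}(Q^\calN)$, send it to its range $\rng(A) \subseteq Q$, which is countable by Fact \ref{fact:bqo} and nonempty. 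The main work is to check these two assignments are monotone and mutually inverse modulo the relevant equivalences, and that it does not matter which of $\leq_{1+\xi}$ or $\leq^\wadge_{1+\xi}$ we use — since $D^\wadge_{1+\xi} \subseteq D_{1+\xi}$, we automatically get that $\leq^\wadge_{1+\xi}$ is contained in $\leq_{1+\xi}$, so it suffices to prove $\rng(A) \leq^* \rng(B) \Rightarrow A \leq^\wadge_{1+\xi} B$ and $A \leq_{1+\xi} B \Rightarrow \rng(A) \leq^* \rng(B)$, which sandwiches everything.

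For monotonicity of $A \mapsto \rng(A)$: if $A \leq_{1+\xi} B$ via a $\mathbf{\Delta}^0_{1+\xi}$-function $\psi$ with $A(x) \leq_Q B(\psi(x))$ for all $x$, then for every $q \in \rng(A)$, picking $x$ with $A(x) = q$ gives $q \leq_Q B(\psi(x)) \in \rng(B)$, so $\rng(A) \leq^* \rng(B)$. For monotonicity of $S \mapsto A_S$ and the inequality $\rng(A) \leq^* \rng(B) \Rightarrow A \leq^\wadge_{1+\xi} B$: this is where the hypothesis $A \in \mathbf{\Delta}^0_{1+\xi}(Q^\calN)$ enters. The point is that $A$ has the $\mathbf{\Delta}^0_{1+\xi}$-piecewise structure — $\calN$ is partitioned into $\mathbf{\Delta}^0_{1+\xi}$-sets $\{A^{-1}(q)\}_{q \in \rng(A)}$ — and on each such piece $A$ is constant with value $q$; by the domination hypothesis pick $r_q \in \rng(B)$ with $q \leq_Q r_q$ and a point $y_q$ with $B(y_q) = r_q$, and define $\psi$ on the piece $A^{-1}(q)$ to be the constant continuous map $x \mapsto y_q$. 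Then $\psi$ is $\mathbf{\Delta}^0_{1+\xi}$-piecewise continuous, i.e.~$\psi \in D^\wadge_{1+\xi}$, and $A(x) = q \leq_Q r_q = B(\psi(x))$ for every $x$, giving $A \leq^\wadge_{1+\xi} B$. Finally, that the two maps invert each other modulo equivalence is immediate: $\rng(A_S) = S$ on the nose, and $A \equiv^\wadge_{1+\xi} A_{\rng(A)}$ by the argument just given applied in both directions (using $\rng(A) \leq^* \rng(A)$ and $\rng(A_{\rng(A)}) = \rng(A) \leq^* \rng(A)$).

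I expect the only subtle point — really the \emph{only} place the structure of $\mathbf{\Delta}^0_{1+\xi}$ is used, and hence the crux of why the conclusion holds for $\leq_{1+\xi}$ at all and not just for some coarser reducibility — is the observation that a $\mathbf{\Delta}^0_{1+\xi}$-measurable $Q$-partition is genuinely $\mathbf{\Delta}^0_{1+\xi}$-piecewise constant, so that the reducing function can be taken locally constant (hence in the small class $D^\wadge_{1+\xi}$). Everything else is bookkeeping with the domination quasiorder. One should also note $\xi \geq 0$ is all that is needed here (the case $\xi = 0$ recovers the classical fact that Wadge degrees of open-and-closed $Q$-partitions form $Q^\sqcup$), and that countability of $\rng(A)$, crucial for $\rng(A)$ to be an element of $Q^*$, is exactly Fact \ref{fact:bqo}.
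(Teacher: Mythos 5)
Your proposal is correct and follows essentially the same route as the paper: the invariant is the (countable, by Fact \ref{fact:bqo}) image $A(\calN)\in Q^*$, one direction is the trivial pushforward of a reduction, and the other is the same piecewise-constant reducing map $x\mapsto y_{A(x)}$, which lands in $D^\wadge_{1+\xi}$ and then sandwiches $\leq^\wadge_{1+\xi}$ and $\leq_{1+\xi}$ together. The only cosmetic difference is that you certify membership in $D^\wadge_{1+\xi}$ directly from the partition $\{A^{-1}(q)\}$, whereas the paper groups by fibers of the reducing function and invokes the equivalence of $\mathbf{\Sigma}^0_{1+\xi}$- and $\mathbf{\Delta}^0_{1+\xi}$-piecewise continuity; you also make the surjectivity onto $Q^*$ explicit via the clopen-piecewise-constant partitions $A_S$, which the paper leaves implicit.
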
 

\begin{proof}
We have to check that $(\mathbf{\Delta}^0_{1+\xi}(Q^\mathcal{N});\leq_{1+\xi})\simeq Q^*$.
Associate with any  $A\in\mathbf{\Delta}^0_{1+\xi}(Q^\mathcal{N})$ the image $A(\calN)\in Q^*$.
Observe that, if $A\leq_{1+\xi}B$ via $f$ then $A(x)\leq_Q B(f(x))$, hence $A(\calN)\leq^*B(\calN)$.
Conversely, let $A(\calN)\leq^*B(\calN)$, then for some $g\colon A(\calN)\to B(\calN)$ we have $q\leq_Q g(q)$ for any $q\in A(\calN)$.
Given $q\in A(\calN)$, choose $y_q\in\calN$ with $B(y_q)=g(q)$. Now define a function $f$ on $\calN$ by $f(x)=y_{A(x)}$.
Then $A(x)\leq_Q B(f(x))$.
Note that the image of $f$ is countable as $A(\calN)$ is countable by Fact \ref{fact:bqo}.
Moreover, for each $y\in\calN$, since $f^{-1}(y)$ is the union of some sets $A^{-1}(q)\in\mathbf{\Delta}^0_{1+\xi}$, $q\in A(\calN)$, and $A(\calN)\subseteq Q$ is countable, we have $f^{-1}(y)\in\mathbf{\Sigma}^0_{1+\xi}$.
As the image of $f$ is countable, this means that $f$ is $\mathbf{\Sigma}^0_{1+\xi}$-piecewise constant.
Since $\mathbf{\Sigma}^0_{1+\xi}$-piecewise continuity is clearly equivalent to $\mathbf{\Delta}^0_{1+\xi}$-piecewise continuity, this shows that $A\leq_{1+\xi}^\wadge B$.
\end{proof}
 

For any $\xi<\omega_1$ we define $s^*_\xi\colon\mathcal{T}_{\omega_1}(Q)\to\mathcal{T}_{\omega_1}(Q)$ as follows: let $s^*_0$ be the identity transformation, and for non-zero $\xi=\omega^{\alpha_0}+\cdots+\omega^{\alpha_m}$, $\alpha_0\geq\cdots\geq\alpha_m$, we set $s^*_\xi=s_{\alpha_0}\circ\cdots\circ s_{\alpha_m}$.
It is straightforward to check the following:

\begin{Observation}\label{t}
\begin{enumerate}
\item For all $\xi<\omega_1$, bqo $Q$, and $T,V\in\mathcal{T}_{\omega_1}(Q)$, we have: $T\leq_hV$ iff $s^*_\xi(T)\leq_hs^*_\xi(V)$.
\item For all $\xi,\eta<\omega_1$ and bqo $Q$, $s^*_\xi$ maps $\mathcal{T}_{\eta}(Q)$ into $\mathcal{T}_{\xi+\eta}(Q)$.
\end{enumerate}
\end{Observation}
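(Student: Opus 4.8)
The plan is to reduce both items to single‑step statements about the individual operations $s_\alpha$ and then iterate along the Cantor normal form $\xi=\omega^{\alpha_0}+\cdots+\omega^{\alpha_m}$ ($\alpha_0\geq\cdots\geq\alpha_m$), using the factorisation $s^*_\xi=s_{\alpha_0}\circ\cdots\circ s_{\alpha_m}$ (the case $\xi=0$ being trivial, as $s^*_0$ is the identity). For (1) I would isolate the claim that each $s_\alpha$ is an order‑embedding for $\leq_h$, i.e.\ $U\leq_hV$ iff $s_\alpha(U)\leq_hs_\alpha(V)$ for all tree terms $U,V\in\mathcal{T}_{\omega_1}(Q)$; for (2) the claim that $s_\alpha$ maps $\mathcal{T}_\zeta(Q)$ into $\mathcal{T}_{\omega^\alpha+\zeta}(Q)$ for every $\zeta<\omega_1$. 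Granting these, (1) follows by composing the equivalence $m+1$ times along $s^*_\xi$, and (2) by applying the range statement successively to $T$, then to $s_{\alpha_m}(T)$, and so on, which places $s^*_\xi(T)$ in $\mathcal{T}_{\omega^{\alpha_0}+\cdots+\omega^{\alpha_m}+\eta}(Q)=\mathcal{T}_{\xi+\eta}(Q)$ by associativity of ordinal addition.

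The single‑step statement behind (1) is essentially immediate from the inductive definition of $\leq_h$ recalled in Section~\ref{trees}: for any tree term $U$ the term $s_\alpha(U)$ is a singleton term, and the clause governing two singletons $s_\alpha(U)$ and $s_\beta(V)$ in the case $\alpha=\beta$ reads precisely $s_\alpha(U)\leq_hs_\alpha(V)\iff U\leq_hV$. The only point to verify is that this clause really applies to arbitrary tree terms $U,V$, and not merely to singleton terms or to elements of $Q$; this is built into the set‑up of \cite{km17}, and I would simply cite it. Then $T\leq_hV\iff s_{\alpha_m}(T)\leq_hs_{\alpha_m}(V)\iff\cdots\iff s^*_\xi(T)\leq_hs^*_\xi(V)$.

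The single‑step statement behind (2) is where the (small) bookkeeping happens, and I expect this to be the only part needing care. I would write $\zeta=\omega^{\gamma_0}+\cdots+\omega^{\gamma_l}$ (allowing $\zeta=0$), so $\mathcal{T}_\zeta=\mathcal{T}_{\omega^{\gamma_0}}\circ\cdots\circ\mathcal{T}_{\omega^{\gamma_l}}$, and split into two cases according to the position of $\alpha$ relative to the leading exponent $\gamma_0$. If $\zeta=0$ or $\alpha\geq\gamma_0$, then $\omega^\alpha+\zeta$ has Cantor normal form $\omega^\alpha+\omega^{\gamma_0}+\cdots+\omega^{\gamma_l}$, hence $\mathcal{T}_{\omega^\alpha+\zeta}(Q)=\mathcal{T}_{\omega^\alpha}(\mathcal{T}_\zeta(Q))$; since the constant symbols of $\mathcal{T}_{\omega^\alpha}(R)$ are exactly the $s_\alpha(r)$ for $r\in R$, taking $R=\mathcal{T}_\zeta(Q)$ shows that $s_\alpha(T)$ is a constant symbol, hence a tree term, of $\mathcal{T}_{\omega^\alpha+\zeta}(Q)$. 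If instead $\alpha<\gamma_0$, then $\omega^\alpha+\zeta=\zeta$, while $\mathcal{T}_\zeta(Q)=\mathcal{T}_{\omega^{\gamma_0}}(R')$ with $R'=\mathcal{T}_{\omega^{\gamma_1}+\cdots+\omega^{\gamma_l}}(Q)$, and $s_\alpha$ is one of the unary function symbols in the language of $\mathcal{T}_{\omega^{\gamma_0}}(\,\cdot\,)$ (those being the $s_\beta$ with $\beta<\gamma_0$), so $s_\alpha(T)$ is again a singleton term of $\mathcal{T}_{\omega^{\gamma_0}}(R')=\mathcal{T}_\zeta(Q)=\mathcal{T}_{\omega^\alpha+\zeta}(Q)$. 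In either case $s_\alpha(T)\in\mathcal{T}_{\omega^\alpha+\zeta}(Q)$, and iterating over $s_{\alpha_m},s_{\alpha_{m-1}},\dots,s_{\alpha_0}$ gives (2). The hard part, such as it is, is only to make sure these term‑formation moves are licensed by the precise formalism of \cite[Definitions 3.19--3.20]{km17} — that forming $s_\alpha(T)$ for a complex term $T$ sitting at level $\zeta$ really yields a term at level $\omega^\alpha+\zeta$ without any further unfolding — after which everything reduces to the routine ordinal arithmetic above.
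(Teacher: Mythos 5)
Your proposal is correct, and it is exactly the routine verification the paper has in mind: the paper states Observation \ref{t} without proof ("straightforward to check"), and your argument — factoring $s^*_\xi$ along the Cantor normal form, using the equal-index singleton clause $s_\alpha(U)\leq_h s_\alpha(V)\iff U\leq_h V$ from the definition of $\leq_h$ for (1), and the term-formation rules of $\mathcal{T}_{\omega^\alpha}$ together with the ordinal arithmetic $\omega^\alpha+\zeta$ (cases $\alpha\geq\gamma_0$ versus $\alpha<\gamma_0$) for (2) — is precisely that intended check.
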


%

 
Recall from \cite{km17} that the $\mathbf{\Sigma}^0_{1+\xi}$-universal conciliatory function $\mathcal{U}_\xi$ for a non-zero $\xi=\omega^{\alpha_0}+\cdots+\omega^{\alpha_m}$ coincides with $\mathcal{U}_{\omega^{\alpha_m}}\circ\cdots\circ\mathcal{U}_{\omega^{\alpha_0}}$ where $\mathcal{U}_{\omega^{\alpha_i}}$ is a $\mathbf{\Sigma}^0_{1+\omega^{\alpha_i}}$-universal conciliatory function in Fact \ref{fact:conciliatory} (2).
We show that $T\mapsto \mu s^\ast_\xi(T)$ induces an embedding of $(T_\eta(Q);\leq_h)$ into $(\mathbf{\Delta}^0_{1+\xi+\eta}(Q^\calN);\leq_{1+\xi})$, where recall Fact \ref{fact:mu-measu} for the range of $\mu s^\ast_\xi$.

\begin{Lemma}\label{mut}
For all $\xi,\eta<\omega_1$, bqo $Q$, and $T,V\in\mathcal{T}_{\eta}(Q)$ we have: $T\leq_hV$ iff  $\mu s^*_\xi(T)\leq_1\mu s^*_\xi(V)$ iff $\mu s^*_\xi(T)\leq_{1+\xi}\mu s^*_\xi(V)$.
\end{Lemma}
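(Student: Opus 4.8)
The plan is to prove the chain of implications $T\leq_hV \Rightarrow \mu s^*_\xi(T)\leq_1\mu s^*_\xi(V) \Rightarrow \mu s^*_\xi(T)\leq_{1+\xi}\mu s^*_\xi(V) \Rightarrow T\leq_hV$. The last implication is immediate: $\leq_1$ is contained in $\leq_{1+\xi}$, wait — we need the converse direction. Let me restate: the easy inclusions give $\mu s^*_\xi(T)\leq_1\mu s^*_\xi(V)\Rightarrow\mu s^*_\xi(T)\leq_{1+\xi}\mu s^*_\xi(V)$ for free since $D_1\subseteq D_{1+\xi}$. So it suffices to prove (a) $T\leq_hV\Rightarrow\mu s^*_\xi(T)\leq_1\mu s^*_\xi(V)$ and (b) $\mu s^*_\xi(T)\leq_{1+\xi}\mu s^*_\xi(V)\Rightarrow T\leq_hV$.

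For (a): By Observation \ref{t}(1), $T\leq_hV$ iff $s^*_\xi(T)\leq_hs^*_\xi(V)$, and by Fact \ref{fact:km-main} this is equivalent to $\mu s^*_\xi(T)\leq_W\mu s^*_\xi(V)$, i.e.\ $\mu s^*_\xi(T)\leq_1\mu s^*_\xi(V)$. This direction is essentially bookkeeping.

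For (b), the real content: suppose $\mu s^*_\xi(T)\leq_{1+\xi}\mu s^*_\xi(V)$, i.e.\ there is a $\mathbf{\Delta}^0_{1+\xi}$-function $f$ with $\mu s^*_\xi(T)(x)\leq_Q\mu s^*_\xi(V)(f(x))$ for all $x$. I would exploit the structure of $s^*_\xi$ together with the recursive definition of $\mu$ on singleton terms: since $s^*_\xi=s_{\alpha_0}\circ\cdots\circ s_{\alpha_m}$ and (by the clause $\mu(s_\alpha(W))=\mu(W)\circ\mathcal{U}_{\om^\alpha}$, applied to non-constant $W$, together with the factorization $\mathcal{U}_\xi=\mathcal{U}_{\om^{\alpha_m}}\circ\cdots\circ\mathcal{U}_{\om^{\alpha_0}}$ recalled just before the lemma), we get $\mu s^*_\xi(W)\equiv_\con\mu(W)\circ\mathcal{U}_\xi$ for $W\in\mathcal{T}_\eta(Q)$ — modulo the edge case where $W$ is (equivalent to) a constant, which needs separate but trivial handling, and modulo the "almost conciliatory'' subtlety when $W$ is a forest rather than a tree. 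Now use Fact \ref{fact:conciliatory}(2): $\mathcal{U}_\xi$ is $\mathbf{\Sigma}^0_{1+\xi}$-measurable and universal, so the $\mathbf{\Delta}^0_{1+\xi}$-function $\mathcal{U}_\xi\circ f$ is $\con$-equivalent to $\mathcal{U}_\xi\circ g$ for some continuous $g$. Since $\mu(V)$ is conciliatory (Fact \ref{fact:conciliatory}(3)) — again with the "almost conciliatory'' caveat if $V$ is a forest, handled by the prefixing trick noted after Fact \ref{fact:conciliatory}(3) — we obtain
\[
\mu s^*_\xi(V)(f(x))=\mu(V)(\mathcal{U}_\xi(f(x)))=\mu(V)(\mathcal{U}_\xi(g(x)))=\mu s^*_\xi(V)(g(x)).
\]
Combined with $\mu s^*_\xi(T)(x)=\mu(T)(\mathcal{U}_\xi(x))$ and the hypothesis, this gives $\mu(T)(\mathcal{U}_\xi(x))\leq_Q\mu(V)(\mathcal{U}_\xi(g(x)))$ for all $x$; by universality of $\mathcal{U}_\xi$ (its range hits every relevant point up to $\con$) this yields $\mu s^*_\xi(T)\leq_W\mu s^*_\xi(V)$ via $g$, whence $s^*_\xi(T)\leq_hs^*_\xi(V)$ by Fact \ref{fact:km-main} and $T\leq_hV$ by Observation \ref{t}(1).

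The main obstacle I expect is the careful matching between the syntactic operator $s^*_\xi$ and the semantic operator $A\mapsto A\circ\mathcal{U}_\xi$ at the level of $\con$-equivalence, in particular ensuring the conciliatory/almost-conciliatory distinctions propagate correctly through iterated applications of the $s_\alpha$'s (the clause $\mu(s_\alpha(W))=\mu(W)\circ\mathcal{U}_{\om^\alpha}$ is stated only for $W$ not $h$-equivalent to a constant, so the induction on the rank of $s^*_\xi(T)$ must treat the constant-collapse cases explicitly), and the bookkeeping needed to pass from trees in $\mathcal{T}_\eta(Q)$ to the forest-valued intermediate stages $s_{\alpha_i}\circ\cdots\circ s_{\alpha_m}(T)$ that arise inside $s^*_\xi$. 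All of these are handled in principle by Facts \ref{fact:conciliatory} and \ref{fact:concate}, but assembling them cleanly is where the work lies.
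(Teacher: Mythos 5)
Your proposal is correct and follows essentially the same route as the paper: the first equivalence via Observation \ref{t} and Fact \ref{fact:km-main}, and the nontrivial direction via the identity $\mu s^*_\xi(V)=\mu(V)\circ\mathcal{U}_\xi$, the $\mathbf{\Sigma}^0_{1+\xi}$-measurability of $\mathcal{U}_\xi\circ f$ (Observation \ref{obs:measu}(3)), universality of $\mathcal{U}_\xi$, and conciliatoriness of $\mu(V)$. The caveats you raise about forests and the almost-conciliatory case are vacuous here, since $T,V\in\mathcal{T}_\eta(Q)$ are trees and the constant-collapse case makes $\mu s_\alpha(W)$ and $\mu(W)\circ\mathcal{U}_{\om^\alpha}$ the same constant function anyway.
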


\begin{proof}
For the first equivalence, by Observation \ref{t}, we have $T\leq_hV$ if and only if $s_\xi^\ast(T)\leq_hs_\xi^\ast(V)$.
By Fact \ref{fact:km-main}, the latter is equivalent to $\mu s^*_\xi(T)\leq_1\mu s^*_\xi(V)$.
The second equivalence for $\xi=0$ follows from Observation \ref{t}. For the second equivalence for $\xi>0$, it suffices to show that $\mu s^*_\xi(T)\leq_{1+\xi}\mu s^*_\xi(V)$ implies $\mu s^*_\xi(T)\leq_1\mu s^*_\xi(V)$. Let $f$ be a $\mathbf{\Delta}^0_{1+\xi}$-function such that $ \mu s^*_\xi(T)=\mu s^\ast_\xi(V)\circ f$. Then
\[
\mu s^\ast_\xi(V)\circ f
= \mu s_{\alpha_0}\cdots s_{\alpha_m}(V)\circ f
= \mu(T)\circ \mathcal{U}_{\omega^{\alpha_m}}\circ\cdots\circ\mathcal{U}_{\omega^{\alpha_0}}\circ f
=\mu(T)\circ \mathcal{U}_\xi\circ f.
\]
As $f\in D_{1+\xi}$ and $\U_\xi$ is $\mathbf{\Sigma}^0_{1+\xi}$-measurable, by Observation \ref{obs:measu} (3), $\mathcal{U}_\xi\circ f$ is $\mathbf{\Sigma}^0_{1+\xi}$-measurable.
Since $\mathcal{U}_\xi$ is $\mathbf{\Sigma}^0_{1+\xi}$-universal by Fact \ref{fact:conciliatory} (2), there is a continuous function $g$ on such that $\mathcal{U}_\xi\circ f$ is $\con$-equivalent to $\mathcal{U}_\xi\circ g$.
By Fact \ref{fact:conciliatory} (3), as $\mu(T)$ is conciliatory, we have $\mu(T)\circ\mathcal{U}_\xi\circ f=\mu(T)\circ\mathcal{U}_\xi\circ g$.
Therefore, $\mu s^*_\xi(T)\leq_1\mu s^*_\xi(V)$ via $g$.
\end{proof}

The above embedding of $(T_\eta(Q);\leq_h)$ into $(\mathbf{\Delta}^0_{1+\xi+\eta}(Q^\calN);\leq_{1+\xi})$ easily extends to an embedding of $(T^\sqcup_\eta(Q);\leq_h)$ into $(\mathbf{\Delta}^0_{1+\xi+\eta}(Q^\calN);\leq_{1+\xi})$.
If $F$ is of the form $T_0\sqcup T_1\sqcup\dots$ for some trees $(T_i)$, we use the symbol $s_\xi^\ast(F)$ to denote $s^*_\xi(T_0)\sqcup s^*_\xi(T_1)\sqcup\dots$.
We also adopt the similar convention for $s_\alpha$
Then, it is easy to see that Lemma \ref{mut} extends to any forests $T,V\in T^\sqcup_\eta(Q)$ as follows:
\[T\leq_hV\iff \mu s_\xi^\ast(T)\leq_1\mu s_\xi^\ast(V)\iff \mu s_\xi^\ast(T)\leq_{1+\xi}^\wadge\mu s_\xi^\ast(V)\iff \mu s_\xi^\ast(T)\leq_{1+\xi}\mu s_\xi^\ast(V).\]

The following small technical lemma will be used in the surjectivity proof.

\begin{Lemma}\label{lem:commutative}
Let $\beta\leq\alpha$ be ordinals.
For any $\mathbf{\Delta}^0_{1+\om^\beta}$-piecewise continuous function $g$, there is a $\mathbf{\Delta}^0_{1+\om^\alpha+\om^\beta}$-piecewise continuous function $h$ such that $g\circ\mathcal{U}_{\om^\alpha}\equiv_\con\mathcal{U}_{\om^\alpha}\circ h$.
\end{Lemma}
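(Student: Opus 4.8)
The plan is to prove this by analyzing how the piecewise-continuous witness for $g$ interacts with the universal map $\mathcal{U}_{\om^\alpha}$, and then transporting the pieces through $\mathcal{U}_{\om^\alpha}$ to build $h$. First I would unfold the hypothesis: since $g$ is $\mathbf{\Delta}^0_{1+\om^\beta}$-piecewise continuous, fix a partition $\{N_k\}_{k\in\om}$ of $\calN$ into $\mathbf{\Delta}^0_{1+\om^\beta}$-sets together with continuous $g_k\colon N_k\to\calN$ agreeing with $g$ on $N_k$. The aim is to show $g\circ\mathcal{U}_{\om^\alpha}$ is (conciliatorily equivalent to) something of the form $\mathcal{U}_{\om^\alpha}\circ h$ with $h$ a $\mathbf{\Delta}^0_{1+\om^\alpha+\om^\beta}$-piecewise continuous map. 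The natural target partition of $\calN$ is $\{\mathcal{U}_{\om^\alpha}^{-1}(N_k)\}_{k}$: since $\mathcal{U}_{\om^\alpha}$ is $\mathbf{\Sigma}^0_{1+\om^\alpha}$-measurable, by Observation \ref{obs:measu} (1) (with the additivity $\om^\alpha+\om^\beta=\om^\alpha\oplus\cdots$ under $\beta\le\alpha$, so that $1+\om^\alpha+\om^\beta$ is the correct bound) each $\mathcal{U}_{\om^\alpha}^{-1}(N_k)$ is $\mathbf{\Delta}^0_{1+\om^\alpha+\om^\beta}$. On each such piece the composite $g\circ\mathcal{U}_{\om^\alpha}$ equals $g_k\circ\mathcal{U}_{\om^\alpha}$, a $\mathbf{\Sigma}^0_{1+\om^\alpha}$-measurable (indeed $\mathbf{\Delta}^0_{1+\om^\alpha+\om^\beta}$-measurable) function; but more is true, it is conciliatory, being the restriction of a composite of conciliatory-friendly maps.

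The key step is then to invoke the universality of $\mathcal{U}_{\om^\alpha}$ piecewise. On the piece $P_k:=\mathcal{U}_{\om^\alpha}^{-1}(N_k)$, I would first extend the continuous partial map $g_k\circ\mathcal{U}_{\om^\alpha}\upto P_k$; more precisely, the $\mathbf{\Sigma}^0_{1+\om^\alpha}$-measurable function obtained by restricting $g\circ\mathcal{U}_{\om^\alpha}$ to $P_k$ and extending arbitrarily (say by a constant) off $P_k$ is $\mathbf{\Sigma}^0_{1+\om^\alpha}$-measurable on all of $\calN$. By Fact \ref{fact:conciliatory} (2), $\mathcal{U}_{\om^\alpha}$ is $\mathbf{\Sigma}^0_{1+\om^\alpha}$-universal, so there is a continuous $g^\con_k\colon\calN\to\calN$ with this function $\con$-equivalent to $\mathcal{U}_{\om^\alpha}\circ g^\con_k$. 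Now set $h:=g^\con_k$ on $P_k$; this $h$ is $\mathbf{\Delta}^0_{1+\om^\alpha+\om^\beta}$-piecewise continuous by construction (partition $\{P_k\}$, continuous pieces $g^\con_k\upto P_k$). Composing, for $x\in P_k$ we have $\mathcal{U}_{\om^\alpha}\circ h(x)=\mathcal{U}_{\om^\alpha}\circ g^\con_k(x)\equiv_\con (g\circ\mathcal{U}_{\om^\alpha})(x)$, which is exactly $g\circ\mathcal{U}_{\om^\alpha}\equiv_\con\mathcal{U}_{\om^\alpha}\circ h$ globally, as desired.

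The main obstacle I anticipate is bookkeeping the $\con$-equivalence across the pieces: universality of $\mathcal{U}_{\om^\alpha}$ is stated for total $\mathbf{\Sigma}^0_{1+\om^\alpha}$-measurable functions, and applying it separately on each $P_k$ produces continuous maps $g^\con_k$ that individually witness $\con$-equivalence on all of $\calN$, but we only use them on $P_k$; one must check that gluing these partial continuous maps along the $\mathbf{\Delta}^0_{1+\om^\alpha+\om^\beta}$-partition still yields a $\mathbf{\Delta}^0_{1+\om^\alpha+\om^\beta}$-piecewise continuous function and that the conciliatory agreement survives the gluing. The latter is immediate because $\con$-equivalence is checked pointwise and each point lies in exactly one $P_k$; the former is the definition of $\mathbf{\Gamma}$-piecewise continuity. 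A secondary point worth spelling out, though routine, is the ordinal arithmetic ensuring that $g\circ\mathcal{U}_{\om^\alpha}$ restricted to a piece is genuinely $\mathbf{\Sigma}^0_{1+\om^\alpha}$-measurable rather than only $\mathbf{\Sigma}^0_{1+\om^\alpha+\om^\beta}$-measurable — this uses that on $P_k$ the relevant preimages under $g_k$ (open sets, pulled back along continuous $g_k$ to $\mathbf{\Sigma}^0_1$ relative sets, then along $\mathcal{U}_{\om^\alpha}$) land in $\mathbf{\Sigma}^0_{1+\om^\alpha}$, so that the conciliatory universal function $\mathcal{U}_{\om^\alpha}$ can indeed absorb it.
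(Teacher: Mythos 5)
Your overall skeleton (pull the partition back along $\mathcal{U}_{\om^\alpha}$, apply universality piecewise, glue) matches the paper's, but the key step as you state it fails. You claim that restricting $g\circ\mathcal{U}_{\om^\alpha}$ to $P_k=\mathcal{U}_{\om^\alpha}^{-1}(N_k)$ and extending by a constant off $P_k$ yields a \emph{total} $\mathbf{\Sigma}^0_{1+\om^\alpha}$-measurable function. It does not: the preimage of an open set under this extension is of the form $\bigl(\mathcal{U}_{\om^\alpha}^{-1}(V)\cap P_k\bigr)$ possibly together with $\calN\setminus P_k$, and the pieces $P_k$ are only $\mathbf{\Delta}^0_{1+\om^\alpha+\om^\beta}$ (that is the whole point of Observation \ref{obs:measu}~(1) here), so the extended function is in general only $\mathbf{\Sigma}^0_{1+\om^\alpha+\om^\beta}$-measurable. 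Your closing remark that the restriction is ``genuinely $\mathbf{\Sigma}^0_{1+\om^\alpha}$-measurable'' is only true \emph{relative to the subspace} $P_k$; universality of $\mathcal{U}_{\om^\alpha}$ (Fact \ref{fact:conciliatory}~(2)) is stated for total $\mathbf{\Sigma}^0_{1+\om^\alpha}$-measurable functions on $\calN$, and relative measurability on a $\mathbf{\Delta}^0_{1+\om^\alpha+\om^\beta}$ piece does not give you such a function, so the appeal to universality is unjustified at exactly the point where it carries the proof. (A related symptom: $g_k\circ\mathcal{U}_{\om^\alpha}\upto P_k$ is not a ``continuous partial map''; only $g_k$ is continuous.)

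The missing idea is to do the extension \emph{before} composing with $\mathcal{U}_{\om^\alpha}$, and to do it conciliatorily: by Fact \ref{fact:conciliatory}~(1), each partial continuous piece $g_k\colon N_k\to\calN$ has a total continuous conciliatory extension $\hat g_k$ with $\con\circ\hat g_k=\con\circ g_k$ on $N_k$. Then $\hat g_k\circ\mathcal{U}_{\om^\alpha}$ is a total $\mathbf{\Sigma}^0_{1+\om^\alpha}$-measurable function, universality legitimately yields a continuous $h_k$ with $\hat g_k\circ\mathcal{U}_{\om^\alpha}\equiv_\con\mathcal{U}_{\om^\alpha}\circ h_k$, and setting $h=h_k$ on $Y_k=\mathcal{U}_{\om^\alpha}^{-1}(N_k)$ gives the desired $\mathbf{\Delta}^0_{1+\om^\alpha+\om^\beta}$-piecewise continuous $h$; the conciliatory agreement $\con\circ g_k=\con\circ\hat g_k$ on $N_k$ is what makes the glued equivalence $g\circ\mathcal{U}_{\om^\alpha}\equiv_\con\mathcal{U}_{\om^\alpha}\circ h$ go through. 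This is the paper's argument; without invoking Fact \ref{fact:conciliatory}~(1) (or some genuine total-extension theorem in its place), your version has a real gap rather than a bookkeeping issue.
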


\begin{proof}
By the assumption, there is a $\mathbf{\Sigma}^0_{1+\om^\beta}$-partition $(X_i)_{i\in\om}$ of $\calN$ such that $g_i=g\upto X_i$ is continuous.
By Fact \ref{fact:conciliatory} (1), there is a total conciliatory extension $\hat{g}_i$ of $g_i$.
By Observation \ref{obs:measu} (1), $Y_i=\mathcal{U}_{\om^\alpha}^{-1}(X_i)$ is $\mathbf{\Sigma}^0_{1+\om^\alpha+\om^\beta}$, and $\hat{g}_i\circ\mathcal{U}_{\om^\alpha}$ is $\mathbf{\Sigma}^0_{1+\om^\alpha}$-measurable.
By Fact \ref{fact:conciliatory} (2), as $\mathcal{U}_{\om^\alpha}$ is $\mathbf{\Sigma}^0_{1+\om^\alpha}$-universal, for any $i$, there is a continuous function $h_i$ such that $\hat{g}_i\circ\mathcal{U}_{\om^\alpha}\equiv_\con\mathcal{U}_{\om^\alpha}\circ h_i$.
Then, define $h(x)=h_i(x)$ if $x\in Y_i$.
Clearly, $g\circ\mathcal{U}_{\om^\alpha}\equiv_\con\mathcal{U}_{\om^\alpha}\circ h$, and $h$ is $\mathbf{\Delta}^0_{1+\om^\alpha+\om^\beta}$-piecewise continuous.
\end{proof}


Note that $h$ in Lemma \ref{lem:commutative} is $\mathbf{\Delta}^0_{1+\om^\alpha+\om^\beta}$-piecewise total continuous, i.e., $h\upto Y_i=h_i\upto Y_i$ for a total continuous function $h_i$ on $\calN$.
Now, we prove our key lemma, which shows surjectivity of the map $T\mapsto\mu s_\xi^\ast(T)$ with respect to $\leq_{1+\xi}^\wadge$.

\begin{Lemma}\label{mut2}
Let $\xi,\eta$ be non-zero countable ordinals and  $Q$ be a bqo. Then for any $F\in\mathcal{T}^\sqcup_{\xi+\eta}(Q)$ there is $G\in\mathcal{T}^\sqcup_{\eta}(Q)$ such that $\mu(F)\equiv_{1+\xi}^\wadge\mu s^*_\xi(G)$.
\end{Lemma}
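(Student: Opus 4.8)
The plan is to argue by induction on the complexity of $F$, peeling off the outermost operations and using the structural description of $\mu$. Write $\xi = \omega^{\alpha_0}+\cdots+\omega^{\alpha_m}$ with $\alpha_0\geq\cdots\geq\alpha_m$, and recall $\mathcal{T}_{\xi+\eta} = \mathcal{T}_{\omega^{\alpha_0}}\circ\cdots\circ\mathcal{T}_{\omega^{\alpha_m}}\circ\mathcal{T}_\eta$, so that $F$ is built from $\mathcal{T}_\eta(Q)$-terms by the operations $\cdot$, $\sqcup$, and the singleton operators $s_\beta$ for $\beta$ ranging over the exponents appearing in $\xi$ (together, of course, with $s_\gamma$ for $\gamma$ with $\omega^\gamma \leq \eta$, which already live inside $\mathcal{T}_\eta$). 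Since $\mathcal{T}^\sqcup_\eta(Q)$ is a $\sigma$-semilattice whose $\sigma$-join-irreducibles are (up to $\equiv_h$) the trees in $\mathcal{T}_\eta(Q)$, and $\mu(\sqcup_i S_i) = \bigoplus_i \mu(S_i)$ while $s^*_\xi$ distributes over $\sqcup$, the case $F = \sqcup_i S_i$ reduces immediately to the tree case: if each $S_i$ has a witness $G_i\in\mathcal{T}^\sqcup_\eta(Q)$, take $G = \sqcup_i G_i$, and $\mu(\sqcup_i S_i) = \bigoplus_i\mu(S_i) \equiv^\wadge_{1+\xi} \bigoplus_i \mu s^*_\xi(G_i) = \mu s^*_\xi(G)$, using that $\equiv^\wadge_{1+\xi}$ is respected by the $\bigoplus$ operation (a $\mathbf{\Delta}^0_{1+\xi}$-piecewise continuous reduction can be assembled componentwise on the clopen pieces $n\calN$). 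So the real work is to handle tree terms $T\in\mathcal{T}_{\xi+\eta}(Q)$.

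For a tree term $T$, the recursive definition of $\mu$ gives three subcases. If $T\equiv_h s_\beta(q)$ for $q\in Q$ then $\mu(T)$ is the constant function $\lambda x.q$, and we may take $G = s^*_\eta$ applied to the constant — more simply, $q\in\mathcal{T}_\eta(Q)$ itself — since $\mu s^*_\xi(q)$ is again the constant $q$. If $T = S\cdot F'$ for a tree $S$ and a forest $F'$ of strictly smaller rank, then $\mu(T) = \mu(S)\cdot\mu(F')$; here I would apply the induction hypothesis to $S$ and $F'$ to get witnesses $G_S, G_{F'}\in\mathcal{T}^\sqcup_\eta(Q)$, and then use Fact~\ref{fact:concate} together with the fact that $\mu s^*_\xi(G_S\cdot G_{F'}) = \mu s^*_\xi(G_S)\cdot\mu s^*_\xi(G_{F'})$ to conclude — the point being that $\cdot$ is compatible with $\equiv^\wadge_{1+\xi}$, which follows by feeding the $\mathbf{\Delta}^0_{1+\xi}$-piecewise reductions through the functions $\pi_0,\pi_1$ and open set $J$ of Fact~\ref{fact:concate}. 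The genuinely new subcase is $T = s_{\alpha_0}(V)$ where $V$ is a tree term of $\mathcal{T}_{\xi'+\eta}(Q)$ with $\xi' = \omega^{\alpha_1}+\cdots+\omega^{\alpha_m}$ (or $V$ in $\mathcal{T}_{\alpha_0}(\cdots)$-depth, i.e.\ $s_{\alpha_0}$ applied again): then $\mu(T) = \mu(V)\circ\mathcal{U}_{\omega^{\alpha_0}}$. By the induction hypothesis applied to $V$ (which lives at the lower level $\xi'+\eta$), there is $G_V\in\mathcal{T}^\sqcup_{\eta}(Q)$ with $\mu(V)\equiv^\wadge_{1+\xi'}\mu s^*_{\xi'}(G_V)$; composing on the right with $\mathcal{U}_{\omega^{\alpha_0}}$ and invoking Lemma~\ref{lem:commutative} to commute the $\mathbf{\Delta}^0_{1+\xi'}$-piecewise continuous reduction past $\mathcal{U}_{\omega^{\alpha_0}}$, together with the conciliatory invariance of $\mu s^*_{\xi'}(G_V)$, should yield $\mu(V)\circ\mathcal{U}_{\omega^{\alpha_0}}\equiv^\wadge_{1+\xi}\mu s^*_{\xi'}(G_V)\circ\mathcal{U}_{\omega^{\alpha_0}} = \mu s_{\alpha_0}s^*_{\xi'}(G_V) = \mu s^*_\xi(G_V)$, so $G = G_V$ works.

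The main obstacle I anticipate is precisely the commutation step in the $s_{\alpha_0}$-case: a reduction witnessing $\mu(V)\equiv^\wadge_{1+\xi'}\mu s^*_{\xi'}(G_V)$ is a $\mathbf{\Delta}^0_{1+\xi'}$-piecewise continuous $g$ with $\mu(V)\circ g \equiv_\con \mu s^*_{\xi'}(G_V)$ (using conciliatoriness of $\mu s^*_{\xi'}(G_V)$, which holds by Fact~\ref{fact:conciliatory}(3) for trees — and note $G_V$ may be a forest, so one must be careful to work with the "almost conciliatory" version and absorb the harmless prefix), and one needs to transform $g\circ\mathcal{U}_{\omega^{\alpha_0}}$ into something of the form $\mathcal{U}_{\omega^{\alpha_0}}\circ h$ with $h$ of the right piecewise complexity. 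Lemma~\ref{lem:commutative} is tailor-made for this, but applying it requires $\xi'$ to be "of the form $1+\omega^\beta$ for $\beta\leq\alpha_0$" level-by-level; in general $\xi'$ is a sum $\omega^{\alpha_1}+\cdots+\omega^{\alpha_m}$, so one must iterate the commutation, peeling off $\mathcal{U}_{\omega^{\alpha_1}},\dots,\mathcal{U}_{\omega^{\alpha_m}}$ one at a time (each with its own application of Lemma~\ref{lem:commutative}, legitimate because $\alpha_0\geq\alpha_1\geq\cdots$) and keeping track of the growing piecewise complexity, which stays bounded by $1+\xi$ exactly because of the inequalities among the $\alpha_i$. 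Verifying that this bookkeeping closes up — i.e.\ that the accumulated piecewise level never exceeds $1+\xi$ and that $\con$-equivalences are preserved through all the compositions — is the technical heart of the argument; everything else is a routine structural induction using Facts~\ref{fact:conciliatory}, \ref{fact:concate}, and~\ref{fact:mu-measu}.
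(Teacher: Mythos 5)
The decisive problem is your treatment of the concatenation case $F=S\cdot F'$. You propose the witness $G=G_S\cdot G_{F'}$ and justify it via the identity $\mu s^*_\xi(G_S\cdot G_{F'})=\mu s^*_\xi(G_S)\cdot\mu s^*_\xi(G_{F'})$, but $s^*_\xi$ does not distribute over $\cdot$: it wraps the whole term in the singleton operators, so $\mu s^*_\xi(G_S\cdot G_{F'})$ is the ``$\xi$-jump of the concatenation'', not the concatenation of the jumps, and the two are not even $\equiv^\wadge_{1+\xi}$-equivalent in general. (Also, $G_S$ produced by the induction hypothesis may be a forest, so $G_S\cdot G_{F'}$ need not even be a well-formed tree term.) The phenomenon you miss is that for $\xi\geq 1$ the conciliatory addition collapses to the join: the set $J$ of Fact~\ref{fact:concate} and its complement form a $\mathbf{\Delta}^0_2\subseteq\mathbf{\Delta}^0_{1+\xi}$ partition, so $\mu(S)\cdot\mu(F')$ is $\mathbf{\Delta}^0_{1+\xi}$-piecewise continuously equivalent to $\mu s^*_\xi(G_S)\oplus\mu s^*_\xi(G_{F'})=\mu s^*_\xi(G_S\sqcup G_{F'})$; the correct witness is the join $G_S\sqcup G_{F'}$ (this collapse is exactly why the retraction of Section~\ref{forest} sends $T\cdot V$ to $r_\alpha(T)\sqcup r_\alpha(V)$), and by Lemma~\ref{mut} the witness is unique up to $\equiv_h$, so your $G_S\cdot G_{F'}$, which is in general strictly $\leq_h$-above $G_S\sqcup G_{F'}$, cannot work. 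Concretely, take $Q=\bar{2}$, $\xi=\eta=1$, and $F$ the concatenation of the constants $1$ and $0$: then $\mu(F)$ takes value $0$ on an open set and $1$ on its complement, the correct witness is $1\sqcup 0$ (a clopen partition), while your recipe returns $1\cdot 0$, and $\mu s_0(1\cdot 0)$ has a $\mathbf{\Sigma}^0_2$-complete $0$-part, so it is not $\leq_2$-reducible to $\mu(F)$ (a $\mathbf{\Delta}^0_2$-function pulls $\mathbf{\Sigma}^0_2$ back to $\mathbf{\Sigma}^0_2$); equivalently, $1\cdot 0\not\leq_h 1\sqcup 0$, so Lemma~\ref{mut} already rules it out.

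A second, smaller gap is that your case analysis of singletons is incomplete. At the outer $\mathcal{T}_{\om^{\alpha_0}}$-layer the term language contains the unary symbols $s_\beta$ for every $\beta<\alpha_0$, not just the exponents occurring in $\xi$ or those relevant to $\eta$, and the subcase $F=s_\beta(T)$ with $\beta<\alpha_0$ is where the substantive ``absorption'' argument lives: one must show $\mu s_\beta(T)\equiv^\wadge_{1+\om^{\alpha_0}}\mu s_{\alpha_0}(G)$ for the witness $G$ of $T$, using $\mathcal{U}_{\om^\beta}\in D_{\om^{\beta+1}}\subseteq D_{\om^{\alpha_0}}$, closure of $D_{\om^{\alpha_0}}$ under composition, and $\mathbf{\Sigma}^0_{1+\om^{\alpha_0}}$-universality of $\mathcal{U}_{\om^{\alpha_0}}$ (plus the conciliatority of $\mu(G)$, with the forest case handled by splitting on the first bit). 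Your plan has no branch for this case. Your handling of the leading $s_{\alpha_0}$ itself — commuting the piecewise reduction past $\mathcal{U}_{\om^{\alpha_0}}$ via Lemma~\ref{lem:commutative} in one direction and using universality in the other, iterating over the powers of $\om$ in $\xi$ with the complexity bookkeeping controlled by $\alpha_0\geq\alpha_1\geq\cdots$ — matches the paper's argument in outline; but with the concatenation case broken and the $\beta<\alpha_0$ case missing, the proposed induction does not close.
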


\begin{proof}
We first show the assertion for $\xi=\omega^\alpha$.
Let $F\in\mathcal{T}_{\xi+\eta}(Q)$ be given.
If $T=q$ for some $q\in Q$ we can take $G=\lambda x.q$.
If $F$ is not a tree, i.e., $F=F_0\sqcup F_1\sqcup\cdots$ for some $F_i$, then by induction hypothesis $\mu(F_i)\equiv^\wadge_{1+\xi}\mu s_\alpha(G_i)$ for some $G_i\in\mathcal{T}^\sqcup_{\eta}(Q)$.
Thus, we already have $\mu(F)\equiv_{1+\xi}\mu s_\alpha(G_0)\oplus\mu s_\alpha(G_1)\oplus\dots=\mu s_\alpha(G_0\sqcup G_1\sqcup\dots)$, so there is nothing to do.
Thus, we can assume that $F$ is a tree.


CASE 1. $F=A\cdot B$ for some tree $A$ and a forest $B$.

By induction hypothesis, there are $L,H\in\mathcal{T}_\eta^\sqcup(Q)$ such that $\mu(A)\equiv^\wadge_{1+\xi}\mu s_\alpha(L)$ and $\mu(B)\equiv^\wadge_{1+\xi}\mu s_\alpha(H)$.
We claim that 
\[\mu(F)=\mu(A)\cdot\mu(B)\equiv_{1+\xi}^\wadge\mu s_\alpha(L)\oplus \mu s_\alpha(H)=\mu s_\alpha(L\sqcup H).\]
The direction $\mu s_\alpha(L)\oplus \mu s_\alpha(H)\leq_{1+\xi}^\wadge\mu(A)\cdot\mu(B)$ is obvious.
We show the converse direction.
Let $\pi_0,\pi_1,J$ be as in Fact \ref{fact:concate}.
Then $(\mu(A)\cdot\mu(B))(x)$ is equal to $\mu(B)\circ\pi_1(x)$ if $x\in J$, and to $\mu(A)\circ\pi_0(x)$ if $x\not\in J$.
Let $g,h\in D_{1+\xi}^\wadge$ witness $\mu(A)\leq_{1+\xi}^\wadge\mu s_\alpha(L)$ and $\mu(B)\leq_{1+\xi}^\wadge\mu s_\alpha(H)$, respectively.
It is easy to see that $\mu(A)\cdot\mu(B)$ is reduced to $\mu s_\alpha(L)\oplus\mu s_\alpha(H)$ by the $D_{1+\xi}^\wadge$-function $x\mapsto 1h(\pi_1(x))$ on the open set $J$, and by $x\mapsto 0g(\pi_0(x))$ on the complement.
Since $J$ and its complements are $\mathbf{\Delta}^0_2$, and $2\leq 1+\xi$, this reduction is $\mathbf{\Delta}^0_{1+\xi}$-piecewise continuous.
This concludes the claim.
Hence we can take $G=L\sqcup H$.   

CASE 2. $F$ is a non-trivial singleton, i.e. $F=s_\gamma(T)$ for some tree $T$.

If $\gamma>\alpha$ then note that the language for $\mathcal{T}_\xi$ does not contain the symbol $s_\gamma$ (as $\xi=\omega^\alpha<\omega^\gamma$), and moreover, $\mathcal{T}_{\xi+\eta}(Q)=\mathcal{T}_{\omega^\alpha}\circ\mathcal{T}_\eta(Q)$.
This means that $F$ must be constructed from symbols $(\cdot,\sqcup,s_\beta)_{\beta<\alpha}$ and $s_\alpha(t)$ for terms $t$ in $T_\eta(Q)$.
Thus, if $s_\gamma$ occurs in $F$, then $F$ must be already contained in $\mathcal{T}_\eta(Q)$.
By definition, we have $s_\alpha s_\gamma(T)\equiv_hs_\gamma(T)$; hence we can take $G=F\in\mathcal{T}_\eta(Q)$.
Then, we have $F=s_\gamma(T)\equiv_hs_\alpha s_\gamma(T)=s_\alpha(F)=s_\alpha(G)$.

If $\gamma=\alpha$, then $F$ is already of the form $s_\alpha(T)$.
Again, note that the language for $\mathcal{T}_\xi=\mathcal{T}_{\omega^\alpha}$ does not contain the symbol $s_\alpha$, and moreover, $\mathcal{T}_{\xi+\eta}(Q)=\mathcal{T}_{\omega^\alpha}\circ\mathcal{T}_\eta(Q)$.
This again means that $F$ must be constructed from symbols $(\cdot,\sqcup,s_\beta)_{\beta<\alpha}$ and $s_\alpha(t)$ for terms $t$ in $T_\eta(Q)$.
Hence, we must have $T\in\mathcal{T}_\eta(Q)$.
Thus, we can take $G=T$.

Finally, let $\gamma<\alpha$.
By induction hypothesis, we have $\mu(T)\equiv_{1+\xi}^\wadge\mu s_\alpha(G)$ for some $G\in\mathcal{T}^\sqcup_\eta(Q)$.
This clearly implies that $\mu s_\alpha(G)\leq_{1+\xi}^\wadge\mu(F)$.
We show the converse direction.
Let $f\in D_{1+\om^\alpha}^\wadge$ witness $\mu(T)\leq_{1+\xi}^\wadge\mu s_\alpha(G)$.
We first assume that $G$ is a tree.
Then, we have
\[\mu(F)(x)=\mu(T)(\Uom{\gamma}(x))\leq_Q\mu s_\alpha(G)(f\circ \Uom{\gamma}(x))=\mu(G)(\Uom{\alpha}\circ f\circ \Uom{\gamma}(x)).\]
By Observation \ref{obs:measu} (2), $\mathcal{U}_{\omega^\gamma}\in D_{\omega^{\gamma+1}}\subseteq D_{\omega^\alpha}$, and we also have $f\in D_{\om^\alpha}$.
As $D_{\om^\alpha}$ is closed under composition, we have $f\circ\Uom{\gamma}\in D_{\om^\alpha}$, and therefore, $\Uom{\alpha}\circ f\circ \Uom{\gamma}$ is still $\mathbf{\Sigma}^0_{1+\om^\alpha}$-measurable by Observation \ref{obs:measu} (3).
By $\mathbf{\Sigma}^0_{1+\om^\alpha}$-universality of $\Uom{\alpha}$, there is a continuous function $h$ such that $\Uom{\alpha}\circ f\circ \Uom{\gamma}$ is $\con$-equivalent to $\Uom{\alpha}\circ h$.
Since $\mu(G)$ is conciliatory by Fact \ref{fact:conciliatory} (3), we obtain 
\[\mu(F)(x)\leq_Q\mu(G)(\Uom{\alpha}\circ h(x))=\mu s_\alpha(G)(h(x)).\]

This means that $\mu(F)\leq_1\mu s_\alpha(G)$.
If $G$ is a forest of the form $G_0\sqcup G_1\sqcup\dots$, then consider the set $X_n$ of all $x$ such that the first bit of $f\circ\Uom{\gamma}(x)$ is $n$.
Since $f\circ\Uom{\gamma}\in D_{\om^\alpha}$ as seen above, the set $X_n$ is $\mathbf{\Delta}^0_{1+\om^\alpha}$.
By replacing $f$ in the above argument with $f_n\colon x\mapsto nx$, it is straightforward to show that $\mu(F)$ is continuously reducible to $\mu s_\alpha(G_n)$ on $X_n$.
By combining these reductions, we get a $\mathbf{\Delta}^0_{1+\om^\alpha}$-piecewise continuous reduction from $\mu(F)$ to $\mu s_\alpha(G)$.
Consequently, $\mu(F)\equiv_{1+\xi}^\wadge\mu s_\alpha(G)$.

This concludes the proof for $\xi=\omega^\alpha$.
%
We now consider the general case $\xi=\omega^{\alpha_0}+\omega^{\alpha_1}+\cdots+\omega^{\alpha_m}$ for $m>0$.
Fix $F\in\mathcal{T}^\sqcup_\xi(Q)$, and let us consider $\eta_0=\omega^{\alpha_1}+\omega^{\alpha_2}+\dots+\omega^{\alpha_m}+\eta$.
Applying the above argument, there is $G_0\in\mathcal{T}^\sqcup_{\eta_0}(Q)$ such that $\mu(F)\equiv^\wadge_{1+\omega^{\alpha_0}}\mu s_{\alpha_0}(G_0)$.
Next, consider $\eta_1=\omega^{\alpha_2}+\dots+\omega^{\alpha_m}+\eta$.
Apply the above argument to $G_0$, there is $G_1\in\mathcal{T}^\sqcup_{\eta_1}(Q)$ such that $\mu(G_0)\equiv^\wadge_{1+\omega^{\alpha_1}}\mu s_{\alpha_1}(G_1)$.
Let us now consider
$\eta_i=\om^{\alpha_{i+1}}+\dots+\om^{\alpha_m}+\eta$.
By iterating the above procedure, we eventually obtain a sequence $G_0,G_1,\dots,G_m$ such that $G_i\in\mathcal{T}^\sqcup_{\eta_i}(Q)$ such that $\mu(G_{i-1})\equiv^\wadge_{1+\om^{\alpha_i}}\mu s_{\alpha_i}(G_i)$, where $G_{-1}=F$.
We now want to show $\mu(F)\equiv^\wadge_{1+\xi}\mu s^\ast_{\xi}(G_m)=\mu s_{\alpha_0}s_{\alpha_1}\dots s_{\alpha_m}(G_m)$.

To prove this, we claim that $\mu(F)\equiv^\wadge_{1+\om^{\alpha_0}+\om^{\alpha_1}}\mu s_{\alpha_0}s_{\alpha_1}(G_1)$.
For the forward direction, let $f\in D^\wadge_{1+\om^{\alpha_0}}$ witness $\mu(F)\leq_{1+\om^{\alpha_0}}^\wadge\mu s_{\alpha_0}(G_0)$ and $g\in D^\wadge_{1+\om^{\alpha_1}}$ witness $\mu(G_0)\leq_{1+\om^{\alpha_1}}^\wadge\mu s_{\alpha_1}(G_1)$.
First assume that both $G_0$ and $G_1$ are trees.
Then,
\begin{multline*}
\mu(F)(x)\leq_Q\mu s_{\alpha_0}(G_0)(f(x))=\mu(G_0)(\mathcal{U}_{\om^{\alpha_0}}\circ f(x))\\
\leq_Q\mu s_{\alpha_1}(G_1)(g\circ \mathcal{U}_{\om^{\alpha_0}}\circ f(x))=\mu (G_1)(\Uom{\alpha_1}\circ g\circ \mathcal{U}_{\om^{\alpha_0}}\circ f(x)).
\end{multline*}
As $f\in D_{1+\om^{\alpha_0}}$, the composition $\Uom{\alpha_0}\circ f$ is $\mathbf{\Sigma}^0_{1+\om^{\alpha_0}}$-measurable by Observation \ref{obs:measu} (3).
Similarly, as $g\in D_{1+\om^{\alpha_1}}$, $\Uom{\alpha_1}\circ g$ is $\mathbf{\Sigma}^0_{1+\om^{\alpha_1}}$-measurable.
Hence, $\Uom{\alpha_1}\circ g\circ\Uom{\alpha_0}\circ f$ is $\mathbf{\Sigma}^0_{1+\om^{\alpha_0}+\om^{\alpha_1}}$-measurable by Observation \ref{obs:measu} (1).
By $\mathbf{\Sigma}^0_{1+\om^{\alpha_0}+\om^{\alpha_1}}$-universality of $\Uom{\alpha_1}\circ\Uom{\alpha_0}$, there is a continuous function $h$ such that $\Uom{\alpha_1}\circ g\circ\Uom{\alpha_0}\circ f$ is $\con$-equivalent to $\Uom{\alpha_1}\circ\Uom{\alpha_0}\circ h$.
Since $\mu(G_1)$ is conciliatory by Fact \ref{fact:conciliatory} (3), we obtain
\[\mu(F)(x)\leq_Q\mu(G_1)(\Uom{\alpha_1}\circ\Uom{\alpha_0}\circ h(x))=\mu s_{\alpha_0}s_{\alpha_1}(G_1)(h(x)).\]
Hence, we have $F\leq_1\mu s_{\alpha_0}s_{\alpha_1}(G_1)$.
If $G_0$ and $G_1$ are forest, we need to decompose the domain according to the first bit of $f(x)$ and that of $g\circ\Uom{\alpha_0}\circ f(x)$.
These functions are $\mathbf{\Sigma}^0_{1+\om^{\alpha_0}+\om^{\alpha_1}}$-measurable, and so the decomposition is $\mathbf{\Delta}^0_{1+\om^{\alpha_0}+\om^{\alpha_1}}$.
Hence, $F$ is reducible to $\mu s_{\alpha_0}s_{\alpha_1}(G_1)$ by a $D_{1+\om^{\alpha_0}+\om^{\alpha_1}}^\wadge$-function.

For the converse direction, we similarly have a $D_{1+\om^{\alpha_0}}^\wadge$-function $f$ witnessing $\mu s_{\alpha_0}(G_0)\leq^\wadge_{1+\om^{\alpha_0}}\mu(F)$ and a $D_{1+\om^{\alpha_1}}^\wadge$-function $g$ witnessing $\mu s_{\alpha_1}(G_1)\leq^\wadge_{1+\om^{\alpha_1}}\mu(G_0)$.
We assume that both $G_0$ and $G_1$ are trees.
Then, we have
\[\mu s_{\alpha_0}s_{\alpha_1}(G_1)(x)=\mu s_{\alpha_1}(G_1)(\Uom{\alpha_0}(x))\leq_Q\mu(G_0)(g\circ\Uom{\alpha_0}(x)).\]
By Lemma \ref{lem:commutative}, there is a $D^\wadge_{1+\om^{\alpha_0}+\om^{\alpha_1}}$-function $h$ such that $g\circ\Uom{\alpha_0}$ is $\con$-equivalent to $\Uom{\alpha_0}\circ h$.
Then, as $\mu(G_0)$ is conciliatory, we now have
\[\mu(G_0)(g\circ\Uom{\alpha_0}(x))=\mu(G_0)(\Uom{{\alpha_0}}\circ h(x))=\mu{s_{\alpha_0}}(G_0)(h(x))\leq_Q\mu(F)(f\circ h(x)).\]
By combining the above two inequalities, we obtain $\mu s_{\alpha_0}s_{\alpha_1}(G_1)(x)\leq_Q\mu(F)(f\circ h(x))$.
Since $f\circ h$ is in $D_{1+\om^{\alpha_0}+\om^{\alpha_1}}^\wadge$ (as $D_{1+\om^{\alpha_0}+\om^{\alpha_1}}^\wadge$ is closed under composition), this witnesses $\mu s_{\alpha_0}s_{\alpha_1}(G_1)\leq^\wadge_{1+\om^{\alpha_0}+\om^{\alpha_1}}\mu(F)$.
If $G_0$ and $G_1$ are forest, as in the above argument, we have similar reductions on $\mathbf{\Delta}^0_{1+\om^{\alpha_0}+\om^{\alpha_1}}$ domains.
This concludes the proof of our claim.

We now apply this claim to the sequence $G_0,G_1,\dots,G_m$, where $G_m\in\mathcal{T}_\eta(Q)$.
Then, we eventually obtain $\mu(F)\equiv^\wadge_{1+\xi}\mu s^\ast_\xi(G_m)$.
Then, take $G=G_m$.
%
%
\end{proof}

\begin{proof}[Proof of Theorem \ref{mainth}.] The case $\xi=0$ coincides with the second assertion in Fact \ref{fact:km-main} while the case $\eta=0$ was considered in Lemma \ref{eta0}, so we assume that both $\xi,\eta$ are non-zero.
We show that $T\mapsto \mu s_\xi^\ast(T)$ induces an isomorphism $(\mathcal{T}^\sqcup_{\eta}(Q);\leq_h)\simeq(\mathbf{\Delta}^0_{1+\xi+\eta}(Q^\mathcal{N});\leq_{1+\xi})$.
Note that $T\in\mathcal{T}^\sqcup_{\eta}(Q)$ implies $s_\xi^\ast (T)\in\mathcal{T}^\sqcup_{\xi+\eta}(Q)$ by Observation \ref{t}, so $\mu s_\xi^\ast(T)\in\mathbf{\Delta}^0_{1+\xi+\eta}(Q^\mathcal{N})$ by Fact \ref{fact:mu-measu}.
By Lemma \ref{mut}, for all $T,V\in\mathcal{T}_{\eta}(Q)$ we have: $T\leq_hV$ iff   $\mu s^*_\xi(T)\leq_{1+\xi}\mu s^*_\xi(V)$ iff $\mu s^*_\xi(T)\leq_{1+\xi}^\wadge\mu s^*_\xi(V)$, so it suffices to show that for any $A\in\mathbf{\Delta}^0_{1+\xi+\eta}(Q^\calN)$ there is $G\in\mathcal{T}^\sqcup_{\eta}(Q)$ with $A\equiv_{1+\xi}^\wadge\mu s^*_\xi(G)$ (which clearly implies $A\equiv_{1+\xi}\mu s^*_\xi(G)$). By Fact \ref{fact:km-main} again, $A\equiv_1\mu (F)$ for some $F\in\mathcal{T}^\sqcup_{\xi+\eta}(Q)$.
By Lemma \ref{mut2}, there is $G\in\mathcal{T}^\sqcup_{\eta}(Q)$ such that $\mu(F)\equiv_{1+\xi}^\wadge\mu s^*_\xi(G)$. Thus, $A\equiv_{1+\xi}^\wadge\mu s^*_\xi(G)$ as desired.
\end{proof}

Indeed, the above proof shows that our main result holds for any qo $\preceq$ which is intermediate between $\leq_{1+\xi}^\wadge$ and $\leq_{1+\xi}$; that is, $(\mathcal{T}^\sqcup_{\eta}(Q);\leq_h)\simeq(\mathbf{\Delta}^0_{1+\xi+\eta}(Q^\mathcal{N});\preceq)$.
However, any nontrivial Borel amenable reducibility notion is induced by a class of the form ${D}_\xi^\mathcal{F}$ for some $\xi<\om_1$ and $\mathcal{F}$, and most natural classes $\mathcal{F}$ considered in Section \ref{wadge} satisfy this condition (recall that $D_\xi$ is the greatest one among level $\xi$ classes).
Consequently, our main result gives a complete combinatorial description of the structure of Borel $Q$-partitions w.r.t.~these Borel amenable reducibilities.

\section{Characterisation in terms of forests}\label{forest}

By Fact \ref{fact:km-main}, the map $\mu$ gives an isomorphism $(\mathcal{T}_{\xi+\eta}^\sqcup(Q);\leq_h)\simeq(\mathbf{\Delta}^0_{1+\xi+\eta}(Q^\calN);\leq_1)$.
On the one hand, we have a coarser qo $\leq_{1+\xi}$ on $\mathbf{\Delta}^0_{1+\xi+\eta}(Q^\calN)$, which induces a qo $\leq_h^{\xi}$ on $\mathcal{T}_{\xi+\eta}^\sqcup(Q)$ defined by $T\leq_h^{\xi}S$ iff $\mu(T)\leq_{1+\xi}\mu(S)$.
The quotient poset $(\mathcal{T}_{\xi+\eta}^\sqcup(Q);\leq_h^{\xi})$ is clearly isomorphic to $(\mathbf{\Delta}^0_{1+\xi+\eta}(Q^\calN);\leq_{1+\xi})$.
On the other hand, by Theorem \ref{mainth}, we have $(\mathcal{T}_\eta^\sqcup(Q);\leq_h)\simeq(\mathbf{\Delta}^0_{1+\xi+\eta}(Q^\calN);\leq_{1+\xi})$.
Therefore, we have a collection $(\leq_h^{\xi})_{\xi<\om_1}$ of induced qo's satisfying $(\mathcal{T}_\eta^\sqcup(Q);\leq_h)\simeq(\mathcal{T}_{\alpha+\eta}^\sqcup(Q);\leq_h^{\alpha})\simeq(\mathcal{T}_{\beta+\eta}^\sqcup(Q);\leq_h^{\beta})$.
%
%
Is it possible to characterise these induced qo's in terms of (natural operations on) labeled forests, without using the seemingly quite different Wadge-like reducibilities on $Q$-partitions?
In this section we address this question, and show the following theorem:

\begin{Theorem}\label{thm:main-syn}
For any ordinal $\xi<\om_1$, there is an endomorphism $r_\xi^\ast$ on $(\mathcal{T}_{\om_1}^\sqcup(Q);\leq_h)$ such that $r_\xi^\ast$ maps $\mathcal{T}^\sqcup_{\xi+\eta}(Q)$ into $\mathcal{T}_\eta^\sqcup(Q)$ for any $\eta$, and moreover, for any $T,S\in\mathcal{T}_{\om_1}^\sqcup$,
\[r_\xi^\ast(T)\leq_hr_\xi^\ast(S)\iff\mu(T)\leq_{1+\xi}\mu(S).\]
\end{Theorem}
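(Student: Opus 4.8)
The plan is to take $r_\xi^\ast$ to be a one-sided inverse of the operation $s_\xi^\ast$ introduced in Section~\ref{main}, with all the analytic content packaged into Lemma~\ref{mut2}. For $\xi=0$ put $r_0^\ast=\mathrm{id}$: then $\leq_{1+0}=\leq_1$ and the equivalence is Fact~\ref{fact:km-main}, so assume $\xi>0$. Given $F\in\mathcal{T}^\sqcup_{\om_1}(Q)$, I let $\eta$ be least with $F\in\mathcal{T}^\sqcup_{\xi+\eta}(Q)$ --- such $\eta$ exists because the iterated forests $\mathcal{T}^\sqcup_\zeta(Q)$ increase with $\zeta$ (via the canonical embeddings) and $\xi+\eta\geq\eta$, so $F\in\mathcal{T}^\sqcup_\zeta(Q)$ already forces $F\in\mathcal{T}^\sqcup_{\xi+\max(\zeta,1)}(Q)$ --- and I define $r_\xi^\ast(F)$ to be a fixed $G\in\mathcal{T}^\sqcup_\eta(Q)$ with $\mu(F)\equiv^\wadge_{1+\xi}\mu s_\xi^\ast(G)$, produced by Lemma~\ref{mut2} (or, when $\eta=0$, by Lemma~\ref{eta0}: one may simply take $G=\mu(F)(\calN)\in Q^\sqcup=\mathcal{T}^\sqcup_0(Q)$). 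In fact the induction in the proof of Lemma~\ref{mut2} yields a choice-free, purely combinatorial recipe for $r_\xi^\ast$: for $\xi=\om^\alpha$ it fixes constants, distributes over $\sqcup$, rewrites a top-level $\cdot$ as $\sqcup$, and keeps / deletes / peels off a top-level $s_\gamma$ according as $\gamma>\alpha$, $\gamma=\alpha$, $\gamma<\alpha$; and $r_\xi^\ast$ for $\xi=\om^{\alpha_0}+\cdots+\om^{\alpha_m}$ is the corresponding composition $r^\ast_{\om^{\alpha_m}}\circ\cdots\circ r^\ast_{\om^{\alpha_0}}$, mirroring $s_\xi^\ast$ and $\mathcal{T}_\xi$. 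This ``forest surgery'' is the inner characterisation promised at the beginning of the section, but for the statement only existence is needed.

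The single fact I extract from Lemma~\ref{mut2} is the \emph{key identity}: if $F\in\mathcal{T}^\sqcup_{\xi+\eta}(Q)$ then $r_\xi^\ast(F)\in\mathcal{T}^\sqcup_\eta(Q)$ and $\mu(F)\equiv^\wadge_{1+\xi}\mu s^\ast_\xi(r_\xi^\ast(F))$, hence also $\mu(F)\equiv_{1+\xi}\mu s^\ast_\xi(r_\xi^\ast(F))$ since $\leq^\wadge_{1+\xi}\subseteq\leq_{1+\xi}$. From this, coherence and $\equiv_h$-invariance are automatic. The forest version of Lemma~\ref{mut} --- which, by taking unions, holds for all $G,G'\in\mathcal{T}^\sqcup_{\om_1}(Q)$ --- gives $G\leq_h G'\iff\mu s^\ast_\xi(G)\leq_{1+\xi}\mu s^\ast_\xi(G')$; so the $\equiv_h$-class of any $G$ with $\mu(F)\equiv_{1+\xi}\mu s^\ast_\xi(G)$ is uniquely determined by the $\leq_{1+\xi}$-degree of $\mu(F)$, hence (by Fact~\ref{fact:km-main}) by the $\equiv_h$-class of $F$, and in particular is unaffected by replacing $\eta$ with a larger ordinal. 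Thus $r_\xi^\ast$ is well defined on $\mathcal{T}^\sqcup_{\om_1}(Q)$, descends to $\equiv_h$-classes, and sends each $\mathcal{T}^\sqcup_{\xi+\eta}(Q)$ into $\mathcal{T}^\sqcup_\eta(Q)$.

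It then remains to verify the displayed equivalence and monotonicity. Given $T,S\in\mathcal{T}^\sqcup_{\om_1}(Q)$, I choose one $\eta$ with $T,S\in\mathcal{T}^\sqcup_{\xi+\eta}(Q)$ and put $G=r_\xi^\ast(T)$, $G'=r_\xi^\ast(S)\in\mathcal{T}^\sqcup_\eta(Q)$; then $r_\xi^\ast(T)\leq_h r_\xi^\ast(S)\iff G\leq_h G'\iff\mu s^\ast_\xi(G)\leq_{1+\xi}\mu s^\ast_\xi(G')\iff\mu(T)\leq_{1+\xi}\mu(S)$, the middle step by the forest form of Lemma~\ref{mut} and the last by the key identity applied to $T$ and to $S$. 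Monotonicity of $r_\xi^\ast$ --- so that it is an endomorphism of $(\mathcal{T}^\sqcup_{\om_1}(Q);\leq_h)$, and indeed a $\sigma$-semilattice endomorphism, since the explicit recipe commutes with $\sqcup$ up to $\equiv_h$ --- then follows: $T\leq_h S$ gives $\mu(T)\leq_1\mu(S)$ by Fact~\ref{fact:km-main}, whence $\mu(T)\leq_{1+\xi}\mu(S)$, whence $r_\xi^\ast(T)\leq_h r_\xi^\ast(S)$. The whole substance of the proof is Lemma~\ref{mut2}; the only remaining point needing care is the bookkeeping of the level $\eta$ --- defining $r_\xi^\ast$ uniformly on all of $\mathcal{T}^\sqcup_{\om_1}(Q)$, independently of how a given term is presented, with values at the correct level --- and this is precisely where the injectivity half of the forest version of Lemma~\ref{mut}, together with Lemma~\ref{eta0} for the base case $\eta=0$, is used.
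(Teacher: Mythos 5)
Your proposal is correct, and its substance coincides with the paper's: both take $r_\xi^\ast$ to be the map $F\mapsto G$ implicit in Lemma \ref{mut2} (with Lemma \ref{eta0} covering the level-$0$ case), and both obtain the displayed equivalence by combining the identity $\mu(F)\equiv^\wadge_{1+\xi}\mu s^\ast_\xi(r_\xi^\ast(F))$ with the forest form of Lemma \ref{mut}. The genuine difference is how the endomorphism property is secured. The paper defines $r_\alpha$ by an explicit term recursion (fix labels, distribute over $\sqcup$, turn $\cdot$ into $\sqcup$, treat $s_\beta$ according to $\beta<\alpha$, $\beta=\alpha$, $\beta>\alpha$ — exactly the recipe you sketch) and then proves monotonicity by a separate purely syntactic induction on the inductive definition of $\leq_h$ (Lemma \ref{lem:endo}); you instead define $r_\xi^\ast$ by choosing, at the least admissible level, any $G$ with $\mu(F)\equiv^\wadge_{1+\xi}\mu s^\ast_\xi(G)$, note that its $\equiv_h$-class is forced by the injectivity half of the forest version of Lemma \ref{mut}, and then deduce monotonicity degree-theoretically: $T\leq_h S\Rightarrow\mu(T)\leq_1\mu(S)\Rightarrow\mu(T)\leq_{1+\xi}\mu(S)\Rightarrow r_\xi^\ast(T)\leq_h r_\xi^\ast(S)$. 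This is non-circular (the equivalence is established before monotonicity is invoked) and suffices for the literal statement, so your shortcut buys a shorter proof; what it gives up is exactly what the paper's Section \ref{forest} is after, namely an \emph{inner}, purely forest-theoretic description of $r_\xi^\ast$ and a combinatorial proof (independent of the Wadge-theoretic machinery) that it respects $\leq_h$ — you recover the recipe only as an aside, and your well-definedness bookkeeping leans on the canonical identifications $\mathcal{T}^\sqcup_{\eta_0}(Q)\subseteq\mathcal{T}^\sqcup_{\eta}(Q)$ for $\eta_0\leq\eta$, which the paper also uses implicitly (Observation \ref{lem:sec-ret} likewise cites the proof of Lemma \ref{mut2} for the level claim), so this is acceptable.
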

To prove Theorem \ref{thm:main-syn}, we first recall that, in the proof of Lemma \ref{mut2}, given $F\in\mathcal{T}^\sqcup_{\xi+\eta}(Q)$, we explicitly defined $G\in\mathcal{T}^\sqcup_{\eta}(Q)$ such that $\mu(F)\equiv^\wadge_{1+\xi}\mu s_\xi^\ast(G)$.
This construction induces a map $F\mapsto G$.
To give a more explicit description of this map, we first consider the case $\xi=\om^\alpha$.
According to our proof of Lemma \ref{mut2}, the map $r_\alpha\colon F\mapsto G$ is defined in the following inductive manner:
\begin{enumerate}
\item $r_\alpha(q)=q$ for any $q\in Q$;
\item If $F=F_0\sqcup F_1\sqcup\cdots$ for some trees $F_i$ then $r_\alpha(F)=r_\alpha(F_0)\sqcup r_\alpha(F_1)\sqcup\cdots$;
\item If $F=T\cdot V$ for some singleton $T$ and forest $V$ then $r_\alpha(F)=r_\alpha(T)\sqcup r_\alpha(V)$;
\item if $F=s_\beta(T)$ for some tree $T$ then $r_\alpha(F)=r_\alpha(T)$ for $\alpha>\beta$, $r_\alpha(F)=T$ for $\alpha=\beta$, and $r_\alpha(F)=F$ for $\alpha<\beta$.
\end{enumerate}
For each $\xi$ we define a map $r^*_\xi:\mathcal{T}^\sqcup_{\omega_1}(Q)\to\mathcal{T}^\sqcup_{\omega_1}(Q)$ as follows: $r^*_0$ is the identity map $Id$, and if $\xi=\omega^{\alpha_0}+\cdots+\omega^{\alpha_m}>0$, $\alpha_0\geq\cdots\geq\alpha_m$, then $r^*_\xi=r_{\alpha_m}\circ\cdots\circ r_{\alpha_0}$.

\begin{Observation}\label{lem:sec-ret}
Let $\xi,\eta<\omega_1$ and  $Q$ be a bqo.
Then $r^*_\xi\circ s^*_\xi=Id$ and $r^*_\xi$ maps $\mathcal{T}^\sqcup_{\xi+\eta}(Q)$ into $\mathcal{T}^\sqcup_{\eta}(Q)$.
\end{Observation}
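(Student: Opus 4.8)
The plan is to verify Observation \ref{lem:sec-ret} by a direct induction, first for the case $\xi = \om^\alpha$ (so that $r_\xi^\ast = r_\alpha$, a single operation defined by the four-clause recursion above), and then bootstrapping to general $\xi$ by composing. For the single-step case, I would prove simultaneously two statements by induction on the rank of $T \in \mathcal{T}^\sqcup_{\om_1}(Q)$: (a) $r_\alpha(s_\alpha(T)) \equiv_h T$, and more precisely $r_\alpha \circ s_\alpha = Id$ on the nose as a map of terms (or at least modulo $\equiv_h$, which is all we need); and (b) $r_\alpha$ maps $\mathcal{T}^\sqcup_{\om^\alpha + \eta}(Q)$ into $\mathcal{T}^\sqcup_\eta(Q)$. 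For (a), note that $s_\alpha^\ast$ restricted to the $\xi = \om^\alpha$ case is just $T \mapsto s_\alpha(T)$ on trees (extended to forests componentwise via the convention $s_\alpha(F_0 \sqcup F_1 \sqcup \cdots) = s_\alpha(F_0) \sqcup s_\alpha(F_1) \sqcup \cdots$); applying clause (4) of the $r_\alpha$-definition with $\beta = \alpha$ gives $r_\alpha(s_\alpha(T)) = T$ directly when $T$ is a tree, and clause (2) handles the forest case. So (a) for $\xi = \om^\alpha$ is essentially immediate from the definitions; the content is really in (b).

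For statement (b) with $\xi = \om^\alpha$, I would induct on the term structure of $F \in \mathcal{T}^\sqcup_{\om^\alpha + \eta}(Q)$, using the syntactic description of $\mathcal{T}^\sqcup_{\om^\alpha+\eta}(Q) = \mathcal{T}_{\om^\alpha}(\mathcal{T}_\eta(Q))$ from Section \ref{trees}: its terms are built from the constants and the operations $\cdot$, $\sqcup$, $(s_\beta)_{\beta < \alpha}$, together with the "atoms" $s_\alpha(t)$ for $t$ a term of $\mathcal{T}_\eta(Q)$. Checking the clauses: for a constant $q$, $r_\alpha(q) = q \in \mathcal{T}^\sqcup_\eta(Q)$; for $F = \bigsqcup_i F_i$ or $F = T \cdot V$, clauses (2)–(3) reduce to the inductive hypothesis applied to the immediate subterms, and $\mathcal{T}^\sqcup_\eta(Q)$ is closed under $\sqcup$; for $F = s_\beta(T)$ with $\beta < \alpha$, clause (4) gives $r_\alpha(F) = r_\alpha(T)$ and we apply the inductive hypothesis to $T$ (noting $s_\beta(T) \in \mathcal{T}^\sqcup_{\om^\alpha+\eta}$ forces $T$ to be too); and for $F = s_\alpha(t)$ with $t \in \mathcal{T}_\eta(Q)$ (the $\beta = \alpha$ case of clause (4)), we get $r_\alpha(F) = t \in \mathcal{T}_\eta(Q) \subseteq \mathcal{T}^\sqcup_\eta(Q)$. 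The key point I would flag is that the recursion "bottoms out" correctly: whenever an $s_\gamma$ with $\gamma \geq \alpha$ is encountered, the subterm beneath it is already a $\mathcal{T}_\eta(Q)$-term (as argued in CASES 2 of the proof of Lemma \ref{mut2}), so no further recursion past that point is needed.

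For general $\xi = \om^{\alpha_0} + \cdots + \om^{\alpha_m}$ with $\alpha_0 \geq \cdots \geq \alpha_m$, I would write $r_\xi^\ast = r_{\alpha_m} \circ \cdots \circ r_{\alpha_0}$ and $s_\xi^\ast = s_{\alpha_0} \circ \cdots \circ s_{\alpha_m}$ and compose. The identity $r_\xi^\ast \circ s_\xi^\ast = Id$ follows by peeling off one layer at a time: $r_{\alpha_m} \circ \cdots \circ r_{\alpha_0} \circ s_{\alpha_0} \circ \cdots \circ s_{\alpha_m}$, and by the single-step result $r_{\alpha_0} \circ s_{\alpha_0} = Id$ on all of $\mathcal{T}^\sqcup_{\om_1}(Q)$ (statement (a) above, which was proved without restriction on the argument), so the inner pair cancels, and one repeats. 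Here I should be slightly careful: after cancelling $r_{\alpha_0} \circ s_{\alpha_0}$, the remaining composition is $r_{\alpha_m} \circ \cdots \circ r_{\alpha_1} \circ s_{\alpha_1} \circ \cdots \circ s_{\alpha_m}$, and I want $r_{\alpha_1} \circ s_{\alpha_1} = Id$ — which again is the single-step identity — so the induction on $m$ closes cleanly. For the mapping property, decompose $\xi + \eta = \om^{\alpha_0} + (\om^{\alpha_1} + \cdots + \om^{\alpha_m} + \eta)$; by the single-step case (b), $r_{\alpha_0}$ maps $\mathcal{T}^\sqcup_{\xi+\eta}(Q)$ into $\mathcal{T}^\sqcup_{\om^{\alpha_1}+\cdots+\om^{\alpha_m}+\eta}(Q)$, then $r_{\alpha_1}$ maps that into $\mathcal{T}^\sqcup_{\om^{\alpha_2}+\cdots+\om^{\alpha_m}+\eta}(Q)$, and so on, arriving at $\mathcal{T}^\sqcup_\eta(Q)$ after $m+1$ steps. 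I expect the only genuine subtlety — and hence the "main obstacle", though it is a mild one — to be bookkeeping the decreasing-exponent condition $\alpha_0 \geq \cdots \geq \alpha_m$ so that each intermediate index $\om^{\alpha_{i+1}} + \cdots + \om^{\alpha_m} + \eta$ is in Cantor normal form and each $r_{\alpha_i}$ is being applied in precisely the situation covered by the single-step lemma; everything else is routine structural induction bottoming out at the constants.
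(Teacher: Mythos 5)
Your proposal is correct and takes essentially the same route as the paper: the retraction identity is obtained exactly as you do, from $r_\alpha s_\alpha(T)=T$ (clause (4) with $\beta=\alpha$, extended to forests componentwise) followed by telescoping along the Cantor normal form of $\xi$. For the mapping property the paper simply cites the proof of Lemma \ref{mut2}, whose case analysis (constants, $\sqcup$, $\cdot$, and $s_\gamma$ with $\gamma<\alpha$, $\gamma=\alpha$, $\gamma>\alpha$, then iteration over $\om^{\alpha_0},\dots,\om^{\alpha_m}$) is precisely the structural induction you spell out, so your write-up just makes the paper's terse reference explicit.
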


\begin{proof}
For the first assertion, we have $r_\alpha s_\alpha(T)=T$ by definition.
Thus, $r^\ast_\xi s^\ast_\xi=r_{\alpha_m}\dots r_{\alpha_0} s_{\alpha_0}\dots s_{\alpha_m}$ is the identity map.
The second assertion follows from the proof of Lemma \ref{mut2}.
\end{proof}
 
\begin{Lemma}\label{lem:endo}
For any $\xi<\om_1$, $r^\ast_\xi$ is an endomorphism on $(\mathcal{T}_{\om_1}^\sqcup(Q);\leq_h)$.
\end{Lemma}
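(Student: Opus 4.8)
The plan is to prove that $r_\xi^\ast$ is an endomorphism of $(\mathcal{T}_{\om_1}^\sqcup(Q);\leq_h)$, i.e.\ that $T\leq_h S$ implies $r_\xi^\ast(T)\leq_h r_\xi^\ast(S)$, and more precisely that $r_\xi^\ast(T)\equiv_h r_\xi^\ast(S)$ whenever $T\equiv_h S$ (monotonicity is the real content; well-definedness modulo $\equiv_h$ follows). Since $r^\ast_\xi = r_{\alpha_m}\circ\cdots\circ r_{\alpha_0}$ is a composition of the basic maps $r_\alpha$, and a composition of endomorphisms is an endomorphism, it suffices to show that each single $r_\alpha$ is an endomorphism of $(\mathcal{T}^\sqcup_{\om_1}(Q);\leq_h)$. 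So the whole proof reduces to the case $\xi=\om^\alpha$.

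To handle $r_\alpha$, I would argue by induction on the (sum of) ranks of the two terms $T,S$, mirroring the inductive definition of $r_\alpha$ and the inductive definition of $\leq_h$ from Section~\ref{trees}. The cleanest route, however, is semantic rather than purely combinatorial: by Fact~\ref{fact:km-main}, $T\leq_h S$ iff $\mu(T)\leq_W\mu(S)$, and in the proof of Lemma~\ref{mut2} we showed $\mu(F)\equiv^\wadge_{1+\om^\alpha}\mu s_\alpha(r_\alpha(F))$ for every $F\in\mathcal{T}^\sqcup_{\om_1}(Q)$; combining these, if $T\leq_h S$ then $\mu(T)\leq_W\mu(S)$, hence $\mu(T)\leq_{1+\om^\alpha}\mu(S)$, hence $\mu s_\alpha(r_\alpha(T))\equiv^\wadge_{1+\om^\alpha}\mu(T)\leq_{1+\om^\alpha}\mu(S)\equiv^\wadge_{1+\om^\alpha}\mu s_\alpha(r_\alpha(S))$, so $\mu s_\alpha(r_\alpha(T))\leq_{1+\om^\alpha}\mu s_\alpha(r_\alpha(S))$. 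By the second equivalence in Lemma~\ref{mut} (applied with $\xi=\om^\alpha$, noting $r_\alpha(T),r_\alpha(S)$ lie in the appropriate $\mathcal{T}^\sqcup_\eta(Q)$ by Observation~\ref{lem:sec-ret}), this yields $\mu s_\alpha(r_\alpha(T))\leq_1\mu s_\alpha(r_\alpha(S))$, and then by Fact~\ref{fact:km-main} and Observation~\ref{t}(1) we get $s_\alpha(r_\alpha(T))\leq_h s_\alpha(r_\alpha(S))$ and finally $r_\alpha(T)\leq_h r_\alpha(S)$. One caveat: Lemma~\ref{mut} as stated is about terms in $\mathcal{T}_\eta(Q)$ of a fixed level, so I would first check that for $T,S\in\mathcal{T}^\sqcup_{\om_1}(Q)$ arbitrary there is a single $\eta$ with $T,S\in\mathcal{T}^\sqcup_{\xi+\eta}(Q)$ and $r_\alpha(T),r_\alpha(S)\in\mathcal{T}^\sqcup_\eta(Q)$, which is immediate since the $\mathcal{T}^\sqcup_\zeta(Q)$ form an increasing chain with union $\mathcal{T}^\sqcup_{\om_1}(Q)$.

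An alternative, fully syntactic proof is also available and I would mention it as a sanity check: induct on ranks, and in each case of the definition of $\leq_h$ peel off the outermost constructor on both sides, noting that $r_\alpha$ commutes with $\sqcup$, turns $T\cdot V$ into $r_\alpha(T)\sqcup r_\alpha(V)$ (so a concatenation inequality $A\cdot B\leq_h C\cdot D$ must be massaged, using that $r_\alpha$ collapses the $\cdot$-structure into $\sqcup$-structure, hence one only needs $r_\alpha(A)\leq_h r_\alpha(C)\sqcup r_\alpha(D)$ and $r_\alpha(B)\leq_h r_\alpha(C)\sqcup r_\alpha(D)$, which follow from the induction hypothesis since $A\leq_h C\cdot D$ and $B\leq_h C\cdot D$ in the relevant subcases — here one uses that $X\leq_h Y$ implies $r_\alpha(X)\leq_h r_\alpha(Y)$ inductively plus $r_\alpha(C\cdot D)=r_\alpha(C)\sqcup r_\alpha(D)$), and handles $s_\beta$ according to the trichotomy $\beta<\alpha$, $\beta=\alpha$, $\beta>\alpha$ exactly as in the definition of $r_\alpha$, invoking the identities $s_\alpha s_\beta\equiv_h s_\beta$ for $\beta<\alpha$ and $s_\beta s_\alpha \equiv_h s_\beta$ for $\alpha<\beta$ from the $\leq_h$-definition.

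The main obstacle is the concatenation case $A\cdot B\leq_h C\cdot D$: since $r_\alpha$ does not preserve $\cdot$ but flattens it to $\sqcup$, one cannot directly transport the inequality, and must instead unwind the recursive definition of $\leq_h$ on $\cdot$-terms to extract the finitely many ``atoms'' $A_0,A_1,\dots$ (singletons) appearing along the left spine and show each $r_\alpha(A_i)$ sits below the $\sqcup$ of all atoms of $C\cdot D$; in the semantic approach this difficulty is simply absorbed into the already-proved Lemma~\ref{mut2}, which is why I would present the semantic proof as the primary one and relegate the syntactic verification to a remark.
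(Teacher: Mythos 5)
Your primary (semantic) proof is correct, and it is genuinely different from the paper's argument. The paper proves the lemma entirely inside the forest calculus: it reduces to a single $r_\alpha$ and runs an induction on the complexity of the terms through every clause of the inductive definition of $\leq_h$ — the forest cases, the concatenation case via $r_\alpha(A\cdot B)=r_\alpha(A)\sqcup r_\alpha(B)$, and the trichotomy $\beta<\alpha$, $\beta=\alpha$, $\beta>\alpha$ for singletons $s_\beta(U)\leq_h s_\gamma(V)$. Your route instead imports the Wadge-theoretic machinery: Fact \ref{fact:km-main}, the forest extension of Lemma \ref{mut}, Lemma \ref{mut2} together with the identification that the $G$ produced there is exactly $r_\alpha(F)$ (an identification the paper itself makes, but only later, inside the proof of Theorem \ref{thm:main-syn}), and Observation \ref{lem:sec-ret}. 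Checking dependencies, none of these relies on Lemma \ref{lem:endo}, so there is no circularity; in effect you observe that the displayed equivalence of Theorem \ref{thm:main-syn}, which the paper derives from those same ingredients without invoking Lemma \ref{lem:endo}, already yields monotonicity of $r_\xi^\ast$. What each approach buys: yours is shorter and reuses work already done, but it makes the endomorphism property depend on the analytic side, somewhat against the stated purpose of Section \ref{forest} (a characterisation using only notions for labeled forests), whereas the paper's syntactic induction keeps Lemma \ref{lem:endo} self-contained within the $h$-quasiorder and independent of $\mu$. Two small remarks: your reduction to the case $\xi=\om^\alpha$ by composing endomorphisms is exactly the paper's (implicit) first step; and in your fallback syntactic sketch the worry about the concatenation case is overstated — no unwinding of the left spine is needed, since $A\cdot B\leq_h C\cdot D$ splits by definition into the subcases ($A\leq_h C$ and $B\leq_h C\cdot D$) or ($A\cdot B\leq_h D$), each of which the induction hypothesis plus $r_\alpha(C\cdot D)=r_\alpha(C)\sqcup r_\alpha(D)$ handles directly, which is precisely how the paper argues.
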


\begin{proof}
It suffices to show that $T\leq_hS$ implies $r_\alpha(T)\leq_hr_\alpha(S)$.
We prove the assertion by induction on the complexity of $S,T\in\mathcal{T}_{\om_1}^\sqcup(Q)$.
The base case is trivial.
If $T=\sqcup_iT_i$ is a forest, then $T\leq_hS$ iff $T_i\leq_hS$ for all $i$, and by induction hypothesis, we have $r_\alpha(T_i)\leq_h r_\alpha(S)$ for any $i$.
Thus, $r_\alpha(T)=\sqcup_ir_\alpha(T_i)\leq_hr_\alpha(S)$.
If $S=\sqcup_iS_i$ is a forest, then $T\leq_hS$ iff $T\leq_hS_i$ for some $i$.
By induction hypothesis, this implies $r_\alpha(T)\leq_hr_\alpha(S_i)$ for some $i$; hence $r_\alpha(T)\leq_hr_\alpha(S)$.

Assume that $T=A\cdot B$ for some singleton $A$ and forest $B$, and $S$ is a singleton.
By definition of $\leq_h$, $A\cdot B\leq_h S$ implies $A,B\leq_hS$.
By induction hypothesis, we have $r_\alpha(A),r_\alpha(B)\leq_hr_\alpha(S)$, so $r_\alpha(T)=r_\alpha(A\cdot B)=r_\alpha(A)\sqcup r_\alpha(B)\leq_hr_\alpha(S)$.
Assume that $T$ is a singleton, and $S=C\cdot D$ for some singleton $C$ and forest $D$.
By definition of $\leq_h$, $T\leq_hC\cdot D$ implies either $T\leq_hC$ or $T\leq_hD$.
By induction hypothesis, $r_\alpha(T)\leq_hr_\alpha(C)$ or $r_\alpha(T)\leq_hr_\alpha(D)$; hence $r_\alpha(T)\leq_hr_\alpha(C)\sqcup r_\alpha(D)=r_\alpha(C\cdot D)=r_\alpha(S)$.
Assume that $T=A\cdot B$ and $S=C\cdot D$ for some singletons $A,C$ and forests $B,D$.
By definition, $A\cdot B\leq_hC\cdot D$ iff either $A\leq_hC$ and $B\leq_hC\cdot D$ or $A\cdot B\leq_hD$.
By induction hypothesis, either $r_\alpha(A)\leq_hr_\alpha(C)$ and $r_\alpha(B)\leq r_\alpha(C\cdot D)$ or $r_\alpha(A\cdot B)\leq_h r_\alpha(D)$.
In any case, $r_\alpha(A\cdot B)=r_\alpha(A)\sqcup r_\alpha(B)\leq_h r_\alpha(C)\sqcup r_\alpha(D)=r_\alpha(C\cdot D)$.

Finally, let us consider the case that both $S=s_\beta(U)$ and $T=s_\gamma(V)$ are singletons.
Assume that $S\leq_h T$, and we want to show that $r_\alpha s_\beta(U)\leq_hr_\alpha s_\gamma(V)$.
First consider the case $\beta=\gamma$.
Then, $S\leq_hT$ iff $U\leq_hV$.
By induction hypothesis, $r_\alpha(U)\leq_hr_\alpha(V)$.
If $\beta<\alpha$, then $r_\alpha s_\beta(U)=r_\alpha(U)\leq_h r_\alpha(V)=r_\alpha s_\gamma(V)$.
If $\beta=\alpha$, then $r_\alpha s_\beta (U)=U\leq_hV=r_\alpha s_\gamma(V)$.
If $\beta>\alpha$, then $r_\alpha s_\beta(U)=s_\beta(U)\leq_h s_\beta(V)=s_\gamma(V)=r_\alpha s_\gamma(V)$, since $U\leq_hV$ iff $s_\beta(U)\leq_h s_\beta(V)$ by definition.

For the case $\beta<\gamma$, the assumption $s_\beta(U)\leq_h s_\gamma(V)$ is equivalent to $U\leq_h s_\gamma(V)$.
By induction hypothesis, $r_\alpha(U)\leq_h r_\alpha s_\gamma(V)$.
If $\beta<\alpha$, then $r_\alpha s_\beta(U)=r_\alpha(U)\leq_h r_\alpha s_\gamma(V)$.
If $\beta=\alpha$, then $r_\alpha s_\beta(U)=U\leq_h s_\gamma(V)=r_\alpha s_\gamma(V)$ by $\gamma>\beta=\alpha$.
If $\beta>\alpha$, then $r_\alpha s_\beta(U)=s_\beta(U)\leq s_\beta s_\gamma(V)$ since $U\leq_hs_\gamma(V)$ iff $s_\beta(U)\leq_hs_\beta s_\gamma(V)$.
By $\beta<\gamma$, we have $s_\beta s_\gamma(V)\equiv_h s_\gamma(V)=r_\alpha s_\gamma(V)$, where the last equality follows from $\gamma>\alpha$.
Thus, $r_\alpha s_\beta(U)\leq_hr_\alpha s_\gamma(V)$.

For the case $\beta>\gamma$, the assumption $s_\beta(U)\leq_h s_\gamma(V)$ is equivalent to $s_\beta(U)\leq_h V$.
By induction hypothesis, $r_\alpha s_\beta(U)\leq_h r_\alpha (V)$.
If $\gamma<\alpha$, then $r_\alpha s_\beta(U)\leq_hr_\alpha(V)=r_\alpha s_\gamma(V)$.
If $\gamma=\alpha$, then $r_\alpha s_\beta(U)=s_\beta(U)\leq_h V=r_\alpha s_\gamma(V)$.
If $\gamma>\alpha$, then $r_\alpha s_\beta(U)=s_\beta(U)\leq_h V\leq_h s_\beta(V)=r_\alpha s_\gamma(V)$.
\end{proof}

\begin{proof}[Proof of Theorem \ref{thm:main-syn}.]
By Lemma \ref{lem:endo}, $r_\xi^\ast$ is an endomorphism.
Moreover, if $T\in \mathcal{T}^\sqcup_{\xi+\eta}(Q)$ then $r_\xi^\ast(T)\in\mathcal{T}^\sqcup_\eta(Q)$ by Observation \ref{lem:sec-ret}.
By Lemma \ref{mut}, for any $T,V\in\mathcal{T}_{\xi+\eta}^\sqcup(Q)$, $r_\xi^\ast(T)\leq_hr_\xi^\ast(V)$ if and only if $\mu s_\xi^\ast r_\xi^\ast(T)\leq_{1+\xi} \mu s_\xi^\ast r_\xi^\ast(V)$.
It is straightforward to check that, in our proof of Lemma \ref{mut2}, given $F\in\mathcal{T}^\sqcup_{\xi+\eta}(Q)$, $G$ is chosen as $r^\ast_\xi(F)$.
Thus, by Lemma \ref{mut2}, $\mu(T)\equiv_{1+\xi}^\wadge\mu s_\xi^\ast r^\ast_\xi(T)$, and the similar equivalence holds for $V$.
Consequently, $r_\xi^\ast(T)\leq_hr^\ast_\xi(V)$ if and only if $\mu(T)\leq_{1+\xi}\mu(V)$.
\end{proof}

\begin{Corollary}
For all ordinals $\xi,\eta<\om_1$ and bqo $Q$, $(\mathbf{\Delta}^0_{1+\eta}(Q^\calN);\leq_W)$ is a retract of $(\mathbf{\Delta}^0_{1+\xi+\eta}(Q^\calN);\leq_W)$.
\end{Corollary}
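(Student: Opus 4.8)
The plan is to reduce the statement to the analogous fact about iterated $Q$-labeled forests and then transport it along $\mu$. By Theorem~\ref{char1}, applied to the ordinals $\eta$ and $\xi+\eta$ (both below $\omega_1$), we have isomorphisms of quotient posets $\varphi\colon(\mathcal{T}^\sqcup_\eta(Q);\leq_h)\simeq(\mathbf{\Delta}^0_{1+\eta}(Q^\calN);\leq_W)$ and $\psi\colon(\mathcal{T}^\sqcup_{\xi+\eta}(Q);\leq_h)\simeq(\mathbf{\Delta}^0_{1+\xi+\eta}(Q^\calN);\leq_W)$. Hence it is enough to show that $(\mathcal{T}^\sqcup_\eta(Q);\leq_h)$ is a retract of $(\mathcal{T}^\sqcup_{\xi+\eta}(Q);\leq_h)$, i.e.\ to produce monotone maps in both directions whose composite in one order is the identity.

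For the section I would take $s^\ast_\xi$ and for the retraction $r^\ast_\xi$. By Observation~\ref{t}, extended to forests as noted just after Lemma~\ref{mut}, $s^\ast_\xi$ maps $\mathcal{T}^\sqcup_\eta(Q)$ into $\mathcal{T}^\sqcup_{\xi+\eta}(Q)$ and satisfies $T\leq_hV\iff s^\ast_\xi(T)\leq_h s^\ast_\xi(V)$, so it is in particular monotone. By Lemma~\ref{lem:endo}, $r^\ast_\xi$ is an endomorphism of $(\mathcal{T}^\sqcup_{\om_1}(Q);\leq_h)$, hence monotone, and by Observation~\ref{lem:sec-ret} it maps $\mathcal{T}^\sqcup_{\xi+\eta}(Q)$ into $\mathcal{T}^\sqcup_\eta(Q)$ with $r^\ast_\xi\circ s^\ast_\xi=\mathrm{Id}$. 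Being monotone, $s^\ast_\xi$ and $r^\ast_\xi$ descend to the $\leq_h$-quotients, and the identity $r^\ast_\xi\circ s^\ast_\xi=\mathrm{Id}$ persists there; this exhibits $(\mathcal{T}^\sqcup_\eta(Q);\leq_h)$ as a retract of $(\mathcal{T}^\sqcup_{\xi+\eta}(Q);\leq_h)$.

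Finally I would push this back through the isomorphisms: the maps $\psi\circ\bar{s}^\ast_\xi\circ\varphi^{-1}$ and $\varphi\circ\bar{r}^\ast_\xi\circ\psi^{-1}$ (where $\bar{s}^\ast_\xi,\bar{r}^\ast_\xi$ denote the induced maps on quotients) are monotone between $(\mathbf{\Delta}^0_{1+\eta}(Q^\calN);\leq_W)$ and $(\mathbf{\Delta}^0_{1+\xi+\eta}(Q^\calN);\leq_W)$, and $(\varphi\circ\bar{r}^\ast_\xi\circ\psi^{-1})\circ(\psi\circ\bar{s}^\ast_\xi\circ\varphi^{-1})=\mathrm{Id}$, which is exactly the asserted retraction. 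Concretely, the section is represented by $A\mapsto\mu s^\ast_\xi\nu(A)$, where by Fact~\ref{fact:km-main} $\nu(A)\in\mathcal{T}^\sqcup_\eta(Q)$ is chosen with $A\equiv_W\mu(\nu(A))$, and the retraction likewise via $\mu$ and $r^\ast_\xi$.

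As for difficulty: essentially nothing is left, since all the substance — the definitions of $s^\ast_\xi$ and $r^\ast_\xi$, the proof that $r^\ast_\xi$ is an endomorphism (Lemma~\ref{lem:endo}), and the retraction identity together with the level bookkeeping (Observation~\ref{lem:sec-ret}) — is already in place. The only points deserving a sentence of care are that the forest-level maps pass to the $\leq_h$-quotients, which is immediate from monotonicity, and that $\mu$ respects the relevant Borel levels, which is Fact~\ref{fact:mu-measu} together with the surjectivity clause of Fact~\ref{fact:km-main}.
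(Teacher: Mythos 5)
Your proposal is correct and follows essentially the same route as the paper: the paper also takes $(s^\ast_\xi,r^\ast_\xi)$ as the section--retraction pair, using Lemma~\ref{lem:endo} (via Theorem~\ref{thm:main-syn}) for monotonicity of $r^\ast_\xi$, Observation~\ref{lem:sec-ret} for $r^\ast_\xi\circ s^\ast_\xi=\mathrm{Id}$ and the level bookkeeping, and Fact~\ref{fact:km-main} to transport the retraction along $\mu$ to the Wadge degrees.
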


\begin{proof}
By Theorem \ref{thm:main-syn}, $r_\xi^\ast$ is an endomorphism on $(\mathcal{T}_{\om_1}^\sqcup;\leq_h)$.
Thus, by Fact \ref{fact:km-main}, the map $\mu(T)\mapsto \mu r_\xi^\ast(T)$ is well-defined on the Wadge degrees.
This induces a map $\tilde{r}_\xi^\ast\colon(\mathbf{\Delta}^0_{1+\xi+\eta}(Q^\calN);\leq_W)\to(\mathbf{\Delta}^0_{1+\eta}(Q^\calN);\leq_W)$ given by $[\mu(T)]\mapsto [\mu r_\xi^\ast(T)]$.
Then, $\tilde{s}_\xi^\ast\colon[\mu(T)]\mapsto[\mu s_\xi^\ast(T)]$ satisfies $\tilde{r}_\xi^\ast\tilde{s}_\xi^\ast=Id$ by Observation \ref{lem:sec-ret}; that is, $(\tilde{s}^\ast_\xi,\tilde{r}^\ast_\xi)$ is a section-retraction pair.
\end{proof}

\section{Open question}

One of the most important open questions in bqo-Wadge theory seems to find reasonable generalisations of Theorems \ref{char1} and \ref{mainth} for a certain class of non-Borel functions (under some set-theoretic assumption).
For $Q=2$, most known results have been straightforwardly extended to non-Borel sets under the axiom of determinacy.
The reason why such an extension is possible is because the two-point space $2$ is too easy, so its Wadge degree structure is completely determined by Wadge's lemma, Martin-Monk's lemma, and Steel-van Wesep's theorem (cf.~\cite{cabal12}).
The bqo analogue of Wadge's lemma and Martin-Monk's lemma is van Engelen-Miller-Steel's theorem, and a bqo analogue of Steel-van Wesep's theorem is also known.
These theorems are readily extended to non-Borel $Q$-partitions under a certain set-theoretic assumption, cf.~\cite{Blo14,km18}.
However, contrary to the case $Q=2$, these theorems are far from characterising the Wadge degree structure even for $Q=3$.
Borel bqo-Wadge theory has played an significant role for unveiling hidden structures on the Wadge degrees which cannot be recognised by the Wadge theory for $Q=2$.
We expect that extending bqo-Wadge theory to non-Borel functions would lead us to new ideas revealing more deep structures in Wadge theory, and also to new constructions in wqo/bqo theory.

\section*{Acknowledgements}
The first-named author was partially supported by JSPS KAKENHI Grant 19K03602, 15H03634, and the JSPS Core-to-Core Program (A. Advanced Research Networks).

\end{document}